\documentclass[smallextended,referee,envcountsect,]{svjour3}

\smartqed
\usepackage{comment}
\usepackage{booktabs,framed}
\usepackage{placeins}
\usepackage{algorithm}
\usepackage[misc]{ifsym}
\usepackage{algpseudocode}
\usepackage{subcaption}
\usepackage{array}    
\usepackage{booktabs} 
\usepackage{amsfonts}
\usepackage{graphicx}
\usepackage{tcolorbox,enumerate}
\usepackage{amssymb,latexsym,amsmath,enumitem,amscd,amsthm,graphicx,color}
\usepackage{hyperref}
\allowdisplaybreaks

\newtheorem{assumption}{Assumption}
\begin{document}

\title{Adaptive Variance-Reduced Stochastic Sequential (Quadratic Constrained) Quadratic Programming}

\author{Gopal Tiwari, Md Abu Talhamainuddin Ansary}

\institute{Gopal Tiwari and  Md Abu Talhamainuddin Ansary \at
Department of Mathematics\\
IIT Jodhpur, Jodhpur, India 342030\\
Email:\\
G. Tiwari: gopalpersonalid2003@gmail.com\\
M. A. T, Ansary: md.abutalha2009@gmail.com (\Letter)
 }


\maketitle
\begin{abstract}
In this paper, we propose an adaptive variance-reduced stochastic sequential (quadratic constrained) quadratic programming (AVR-SS(QC)QP) method for nonlinear constrained optimization. The framework integrates SPIDER-based variance reduction, stochastic quasi-Newton curvature information, merit-function globalization, and adaptive switching between stochastic SQP and SQCQP models. The switching mechanism is governed by a relative constraint-model curvature indicator that measures the second-order constraint information neglected by the linearized SQP model. A smoothed hysteresis rule activates the SQCQP model only when the relative constraint curvature is significant. An exact \(L_1\) merit function with Armijo backtracking globalizes the resulting directions, while a coupled stochastic Lyapunov analysis controls the interaction between gradient-estimation error and optimization progress. Under standard smoothness, sampling, regularity, and second-order conditions, we establish almost-sure consistency of the variance-reduced gradient estimator and global convergence of the KKT residual. We further show that, near regular KKT points, the curvature indicator vanishes and the method eventually selects the computationally cheaper SQP model. Numerical experiments on a sparse principal component analysis problem illustrate the practical behavior of the proposed framework.
\end{abstract}
\keywords{Stochastic constrained optimization,  Variance-reduced optimization, Stochastic SQP method, Stochastic SQCQP method, Curvature-aware model switching, Sparse principal component analysis}
\subclass{ 90C06, \and 90C15, \and 49M99, \and 65K99 }

\section{Introduction}\label{sec1}
Large-scale optimization problems increasingly involve data-dependent objectives together with deterministic nonlinear constraints. Such problems arise in statistical learning, empirical risk minimization, data-driven
optimization, and constrained dimensionality reduction, where the objective is commonly represented by an expectation or a finite sum over a large data set, while feasibility is imposed through nonlinear equalities and
inequalities \cite{BottouCurtisNocedal2018,CurtisRobinsonZhou2024}.
In this setting, reduction of the objective-gradient cost alone is not sufficient, since stochastic error, feasibility, multiplier estimation, local-model accuracy, and globalization must be handled simultaneously.
Sequential quadratic programming (SQP) provides a natural framework for this purpose through quadratic local models, linearized constraints, and primal--dual information, together with well-established globalization mechanisms \cite{NocedalWright2006,GillMurraySaunders2005}.

Considerable attention has therefore been given to stochastic variants of
SQP. Equality-constrained stochastic sequential quadratic optimization was
studied by \cite{BerahasCurtisRobinsonZhou2021}, with subsequent extensions
to rank-deficient constraint Jacobians \cite{BerahasCurtisONeillRobinson2024}, predictive variance reduction \cite{BerahasShiYiZhou2023}, fully stochastic trust-region formulations \cite{FangNaMahoneyKolar2024}, and high-probability complexity guarantees \cite{BerahasXieZhou2025}. Related stochastic SQP schemes based on
differentiable exact augmented Lagrangians were developed for equality
constraints \cite{NaAnitescuKolar2023Adaptive} and later extended to
nonlinear equality and inequality constraints \cite{NaAnitescuKolar2023Ineq}. More recently, stochastic sequential quadratic optimization has been considered for stochastic objectives with deterministic nonlinear equality and inequality constraints \cite{CurtisRobinsonZhou2024}. These developments show that much of the
structure of deterministic nonlinear programming can be retained while expensive objective derivatives are replaced by stochastic estimates.

The accuracy of such estimates remains important because gradient noise
propagates to the search direction, multiplier estimates, merit prediction,
and step acceptance. Variance-reduction techniques provide a natural means
of controlling this effect. Recursive estimators such as SARAH, SPIDER,
and PAGE reduce stochastic-gradient error without requiring a full gradient
evaluation at every iteration 
\cite{NguyenLiuScheinbergTakac2017,FangLiLinZhang2018,LiBaoZhangRichtarik2021}. In particular, the SPIDER estimator recursively tracks gradient differences and provides an effective mechanism for finite-sum and expectation-based nonconvex optimization \cite{FangLiLinZhang2018}. Predictive variance reduction has also been incorporated into stochastic SQP \cite{BerahasShiYiZhou2023}, motivating the use of recursive variance reduction in stochastic constrained optimization.

Variance reduction, however, addresses only the stochastic objective model. A separate source of approximation is introduced by the treatment of nonlinear constraints. In standard SQP, the constraints are linearized, so their second-order curvature is omitted from the local subproblem. When the neglected curvature is small, the resulting quadratic program is computationally attractive; when it is significant, the local feasible geometry may be represented less accurately. Sequential quadratically constrained quadratic programming (SQCQP) provides an alternative by retaining quadratic constraint models \cite{FukushimaLuoTseng2003,Solodov2004}. Greater local fidelity is thereby obtained, although a more involved quadratically constrained subproblem must be solved.
This distinction gives rise to a local model-selection problem that is separate from stochastic-gradient estimation. Existing stochastic SQP methods generally retain linearized constraint models throughout the
iterations \cite{BerahasCurtisRobinsonZhou2021,BerahasShiYiZhou2023,NaAnitescuKolar2023Ineq,CurtisRobinsonZhou2024} whereas classical SQCQP methods retain quadratic constraint information \cite{FukushimaLuoTseng2003,Solodov2004}. Persistent use of SQP may discard useful curvature information, while persistent use of SQCQP may introduce unnecessary computational effort when constraint linearization is already adequate. This motivates an adaptive mechanism in which the local model is selected according to the magnitude of the neglected constraint curvature.

To address this issue, we propose the 
\emph{Adaptive Variance-Reduced Stochastic Sequential (Quadratic Constrained) Quadratic Programming} (AVR-SS(QC)QP) method. SPIDER variance reduction, adaptive
mini-batching, positive-definite damped quasi-Newton models, adaptive SQP--SQCQP switching, and exact \(\ell_1\)-merit globalization are combined within a common framework. Model selection is governed by a
\emph{Relative Constraint-Model Curvature Indicator} (RCMCI). The stochastic SQP direction is used as a common probe, and the quadratic constraint-curvature contribution discarded by the SQP linearization is measured relative to a stabilized local scale. A smoothed two-threshold hysteresis rule is then employed so that SQP is selected for small relative curvature, SQCQP is activated when the curvature becomes significant, and the preceding model is retained in the intermediate regime.

Model selection and globalization are treated separately. Regardless of the selected local model, the effective direction is globalized through a common exact \(\ell_1\) merit function with Armijo backtracking \cite{powell1978convergence,NocedalWright2006}. When an SQCQP candidate does not provide the required merit descent, the already available SQP probe is used as a safeguard, while the penalty parameter is adjusted when
additional emphasis on feasibility is required. In this way, second-order constraint information can be exploited locally without requiring SQCQP to be selected at every iteration.

The convergence analysis is developed by first establishing consistency of the SPIDER estimator and then examining the local relationship between the SQP and SQCQP directions. Since distinct positive-definite curvature approximations are employed in the two models, local direction stability is obtained through a perturbation analysis of the SQCQP KKT system evaluated
at the SQP primal--dual pair. Merit descent, multiplier boundedness, penalty stabilization, and stochastic-gradient consistency are subsequently combined through a coupled stochastic Lyapunov argument. Under the stated
smoothness, sampling, regularity, and second-order assumptions, almost-sure convergence of the KKT residual to zero is established. In addition, vanishing of the SQP probe and the RCMCI in the regular local KKT regime leads to eventual selection of the less expensive SQP model.

The framework is finally specialized to constrained Sparse Principal Component Analysis (Sparse PCA), where sparsity is imposed to improve the interpretability of the principal components \cite{ZouHastieTibshirani2006,DAspremontElGhaouiJordanLanckriet2007,GuerraUrzolaVanDeunVeraSijtsma2021}. The first sparse component is obtained by minimizing the negative empirical variance subject to a spherical equality constraint and a smooth \(\ell_1\)-type inequality constraint. The resulting structure is well suited to the proposed method because the relevant constraint curvatures can be evaluated inexpensively.
AVR-SS(QC)QP is compared numerically with the stochastic SQP method StoSQP-CRZ \cite{CurtisRobinsonZhou2024}, the adaptive augmented Lagrangian method AALM-LO \cite{LiuONeill2026}, and a hard-constrained Adam implementation based on \cite{KingmaBa2015} and
\cite{MarquezNeilaSalzmannFua2017}.

The principal contributions are summarized as follows:
\begin{itemize}
    \item We develop AVR-SS(QC)QP, which integrates SPIDER variance reduction, adaptive mini-batching, safeguarded quasi-Newton curvature models, exact-merit globalization, and adaptive SQP--SQCQP model selection within a unified stochastic constrained optimization framework.

    \item We introduce the max-based RCMCI with smoothing and hysteresis, providing a curvature-aware mechanism for deciding when the quadratic constraint information of SQCQP is locally warranted.
    \item We establish local SQP--SQCQP direction stability and develop a coupled stochastic convergence analysis yielding bounded multiplier and penalty sequences, merit-function descent, and almost-sure convergence of the complete KKT residual. The analysis further proves eventual SQP selection near regular KKT points.
    \item We specialize the framework to constrained Sparse PCA and evaluate it against StoSQP-CRZ, AALM-LO, and hard-constrained Adam using common objective, feasibility, KKT, stochastic-oracle, and runtime diagnostics.
\end{itemize}
\par The outline of the paper is as follows. Some prerequisites and the stochastic constrained problem formulation are presented in Section~\ref{sec:problem_formulation}. Section~\ref{sec:methodology} develops the proposed AVR-SS(QC)QP framework, including the variance-reduced gradient estimator, quasi-Newton curvature models, local SQP and SQCQP subproblems, RCMCI-based adaptive switching, and merit-function globalization. Section~\ref{sec:convergence} establishes estimator consistency, local SQP--SQCQP direction stability, merit descent, coupled stochastic Lyapunov descent, global KKT convergence, and eventual SQP selection. Section~\ref{sec:numerical_experiments} specializes the framework to constrained Sparse PCA and presents the experimental setup, benchmark comparisons, and numerical results. Finally, the concluding section summarizes the main findings and discusses directions for future research.

\section{Preliminaries}\label{sec:problem_formulation}
Consider the finite-sum stochastic nonlinear programming problem
\begin{equation*}
\label{eq:general_problem}
\begin{aligned}
(P):~~\min_{x\in\mathbb{R}^{n}}\quad &
F(x):=\frac{1}{N}\sum_{r=1}^{N}F_r(x)\\\text{s.t.}\quad &h(x)=0,\\&c(x)\leq 0,
\end{aligned}\end{equation*}
where $h:\mathbb{R}^{n}\rightarrow\mathbb{R}^{m}$ and
$c:\mathbb{R}^{n}\rightarrow\mathbb{R}^{p}$ are component wise smooth functions. Here, $N$ denotes the total number of samples. The exact gradient of $F$ can be written as \(\nabla F(x)=\frac{1}{N}\sum_{r=1}^{N}\nabla F_r(x) \). Clearly computing exact gradient of $F$ is very much expensive if $N$ is very large. In that case stochastic/min-batch gradients are used to solve $(P)$ using descent methods. For a mini-batch $\mathcal{B}_k\subseteq\{1,\ldots,N\}$ of size
$|\mathcal{B}_k|=b_k$, the mini-batch gradient of $F$ is defined by \( \nabla F_{b_k}(x)=\frac{1}{b_k}\sum_{r\in\mathcal{B}_k}\nabla F_r(x).\)The mini-batch size is selected adaptively according to
$b_k=\min\left\{b_{\max},\left\lceil b_0(1+\gamma_b k)\right\rceil\right\},$
where $b_0$ is the initial mini-batch size, $b_{\max}$ is the prescribed
maximum mini-batch size for non-refresh iterations, and
$\gamma_b>0$ controls the batch-size growth, following the motivation
of adaptive batch-size strategies for variance-reduced methods
\cite{ji2020history}.

The variance-reduced stochastic objective-gradient estimator is
constructed using periodic full-gradient refreshes and mini-batch
recursive updates:
\begin{equation}
\label{eq:spider_estimator}
\boxed{g^k=\begin{cases}\nabla F(x^k),&k\bmod q=0,\\[2mm]\nabla F_{B_k}(x^k)-\nabla F_{B_k}(x^{k-1})+g^{k-1},&k\bmod q\neq0.\end{cases}}\end{equation}
Here, $q\geq1$ denotes the refresh period. At a refresh iteration,
the full finite-sum gradient is evaluated, whereas at a non-refresh
iteration the estimator is updated using a mini-batch gradient
difference.  The stochastic objective-gradient estimation error is defined by $e^k
=g^k-\nabla F(x^k).$  Consequently, the estimator error is reset to zero at
every refresh iteration.


The Lagrangian associated with $(P)$ is defined by
$$ \mathcal{L}(x,\lambda,\mu) =F(x)+\lambda^{\top}h(x)+\mu^{\top}c(x),$$
where $\lambda\in\mathbb{R}^{m}$ and $\mu\in\mathbb{R}^{p}_{+}$ denote the equality and inequality multipliers, respectively. Its primal gradient is
$$\nabla_x\mathcal{L}(x,\lambda,\mu)=\nabla F(x)+J_h(x)^{\top}\lambda+J_c(x)^{\top}\mu. $$
Here $J_h(x)=\left[\nabla h_1(x)^{\top},\nabla h_2(x)^{\top},\dots, \nabla h_m(x)^{\top}\right]^{\top}$ and $J_c(x)=\left[\nabla c_1(x)^{\top},\nabla c_2(x)^{\top},\dots, \nabla c_p(x)^{\top}\right]^{\top}$
 denote the Jacobian matrices of the equality and inequality constraint
mappings, respectively. The stochastic Lagrangian gradient is defined by
$$\nabla_x\mathcal{L}^k=g^k+J_h(x^k)^{\top}\lambda^k+J_c(x^k)^{\top}\mu^k. $$
One can observe that the stochastic and exact Lagrangian gradients differ by
the same objective-gradient estimation error:
\begin{equation}
\label{eq:lagrangian_gradient_error}
\nabla_x\mathcal{L}^k - \nabla_x\mathcal{L}(x^k,\lambda^k,\mu^k)=e^k.\end{equation}
Consequently \(g^k=\nabla F(x^k)\) holds at a refresh iteration and hence \( e^k=0\) holds at a refresh iteration. 
At non-refresh iterations, $e^k$ generally remains nonzero and is controlled by the variance-reduced SPIDER recursion developed in the convergence analysis.
\par A point $(x^\star,\lambda^\star,\mu^\star)$ is said to satisfy the
Karush--Kuhn--Tucker (KKT) conditions for $(P)$ if 
\begin{align*}
    \nabla_x\mathcal{L}(x^\star,\lambda^\star,\mu^\star)&=0\\h(x^\star)&=0\\c(x^\star)&\leq0\\
\mu^\star\geq0,~~\mu^\star\odot c(x^\star)&=0\end{align*}
Exact KKT residual, defined by \begin{equation}\label{eq:kkt_residual}
 R_{\mathrm{KKT}}(x,\lambda,\mu)=\begin{bmatrix}\nabla_x\mathcal{L}(x,\lambda,\mu)
\\[1mm]h(x)\\[1mm]c(x)^+\\[1mm]\mu\odot c(x)\end{bmatrix} \end{equation} collects the KKT conditions. Here, $c(x)^+=\max\{c_j(x),0\}$, $ j=1,\ldots,p$ denotes the component wise positive part. The multiplier vector of inequality constraints are restricted to
$\mu\in\mathbb{R}_+^p$. Hence, \(R_{\mathrm{KKT}}(x,\lambda,\mu)=0\) is equivalent to primal feasibility, dual feasibility, complementarity, and stationary.
\subsection{Local SQP and SQCQP models}
\label{subsec:local_models}

At iteration $k$, the stochastic SQP model combines a first-order approximation of the objective with a positive-definite quadratic approximation of the local Lagrangian curvature, while the constraints are linearized. Let $B^k\in\mathbb{R}^{n\times n}$ be a symmetric positive-definite quasi-Newton approximation to the Hessian of the Lagrangian. The stochastic SQP direction ($d_{\mathrm{SQP}}^k$) is obtained by solving the following quadratic subproblem:
\begin{equation}\label{eq:sqp_subproblem}
\boxed{\begin{aligned}\min_{d\in\mathbb{R}^{n}}\quad &(g^k)^{\top}d+\frac{1}{2}d^{\top}B^k d\\\text{s.t.}\quad &h(x^k)+J_h(x^k)d=0,\\&c(x^k)+J_c(x^k)d\leq0.
\end{aligned}}\end{equation}

For the stochastic SQCQP model, the same stochastic first-order objective information is retained, while a distinct symmetric positive-definite objective-curvature model \(G^k\in\mathbb{R}^{n\times n}\) is employed. In addition, the second-order curvature terms of the constraints are retained. Its direction ($d_{\mathrm{SQCQP}}^k$) is obtained by solving the following subproblem:
\begin{equation}\label{eq:sqcqp_subproblem}\boxed{\begin{aligned}\min_{d\in\mathbb{R}^{n}}\quad &(g^k)^{\top}d+\frac{1}{2}d^{\top}G^k d\\\text{s.t.}\quad &h_i(x^k)+
\nabla h_i(x^k)^{\top}d+\frac{1}{2}d^{\top}\nabla^2 h_i(x^k)d=0,~~i=1,2,\dots,m,\\
&c_j(x^k)+\nabla c_j(x^k)^{\top}d+\frac{1}{2}d^{\top}\nabla^2 c_j(x^k)d\leq0,~~j=1,2,\dots,p.\end{aligned}}\end{equation}

The two local models therefore share the same stochastic objective gradient approximation $g^k$, while using distinct quasi-Newton curvature matrices $B^k$ and $G^k$. Their distinction arises from both the curvature approximation and the treatment of the constraints: SQP uses the Lagrangian-Hessian approximation $B^k$ together with first-order constraint linearizations, whereas SQCQP uses the
objective-Hessian approximation $G^k$ and retains the second-order constraint-curvature terms. Consequently, $d_{\mathrm{SQP}}^k$ provides a computationally inexpensive probe of the local constraint geometry, while $d_{\mathrm{SQCQP}}^k$ is computed when the RCMCI-based switching criterion indicates that the neglected second-order constraint information may be significant. The curvature matrices \(B^k\) and \(G^k\) are updated separately. The precise update formulas are  specified later.
\section{AVR-SS(QC)QP method for constrained nonlinear programming problems}\label{sec:methodology}
In this section, we propose an adaptive variance-reduced stochastic sequential (quadratically constrained) quadratic programming (AVR-SS(QC)QP) method that combines variance-reduced stochastic gradient estimation, quasi-Newton curvature approximation, adaptive selection between stochastic SQP and stochastic SQCQP local models, and merit-function globalization. The central idea is to use the stochastic SQP direction as an inexpensive probe of the local constraint geometry and to activate the more expensive stochastic SQCQP model only when the neglected second-order constraint information is sufficiently significant. The method therefore separates local model selection from global step acceptance: the RCMCI determines the appropriate local model, whereas the merit function and Armijo line search determine the accepted step length.

The adaptive mini-batch size used at non-refresh iterations is defined in the preliminaries. The SPIDER estimator $g^k$ and the corresponding mini-batch gradient $\nabla F_{b_k}(x)$ are likewise defined in \eqref{eq:spider_estimator} and the preceding preliminaries, respectively. At each refresh iteration, the exact finite-sum gradient is evaluated, whereas at each non-refresh iteration the estimator is updated using the recursive mini-batch gradient difference. Thus, the estimation error $e^k$ is reset to zero at every refresh iteration and is controlled between successive refreshes by the SPIDER error recursion established in the convergence analysis. The stochastic Lagrangian gradient $\nabla_x\mathcal{L}^k$, defined in the preliminaries, is used for constructing the stochastic Lagrangian-gradient displacement in the SQP curvature approximation, while $g^k$ provides the first-order objective information in both the local SQP and SQCQP models.

The local SQP and SQCQP models require positive-definite quasi-Newton approximations of the relevant curvature information. Rather than forming the exact Hessian explicitly at every iteration, AVR-SS(CQ)QP maintains two damped BFGS approximations, namely $B^k$ for the stochastic SQP model and $G^k$ for the stochastic SQCQP model.

For $k\geq1$, define the primal displacement by $s^k=x^k-x^{k-1}.$ For the stochastic SQP model, define the stochastic Lagrangian-gradient displacement by
\begin{equation}\label{eq:method_yk}y^{\mathcal{L},k}=\nabla_x\mathcal{L}^k-\nabla_x\mathcal{L}^{k-1}.\end{equation}
while the stochastic objective-gradient displacement is defined by
$y^{F,k}=g^k-g^{k-1}.$

Let \(\operatorname{DBFGS}_{\delta}(B,s,y)\) denote the safeguarded Powell-damped BFGS update associated with the curvature pair \((s,y)\), where \(0<\delta<1\) is the damping parameter. In addition to Powell damping \cite{powell1978convergence,NocedalWright2006}, the update is stabilized by rejecting numerically unreliable secant pairs \cite{kanzow2023regularization}.For the stochastic SQP curvature model, the Lagrangian secant pair \((s^k,y^{\mathcal{L},k})\) is employed, and the update is given by
\begin{equation}\label{eq:B_update}B^k=\operatorname{DBFGS}_{\delta_B}\left(B^{k-1},s^k,y^{\mathcal{L},k}\right),\qquad k\geq1.\end{equation}
Thus, \(B^k\) is maintained as a positive-definite approximation of the local Lagrangian curvature used in the stochastic SQP subproblem.

For the stochastic SQCQP model, the second-order curvature of the constraints is represented explicitly in the quadratic constraint models; hence \(G^k\) approximates only the objective curvature. Since the objective may be nonconvex, the secant pair \((s^k,y^{F,k})\) need not satisfy the BFGS curvature condition. Accordingly, the objective-curvature update is applied only when
$(s^k)^\top y^{F,k}>0,$ which preserves positive definiteness of the BFGS update \cite{NocedalWright2006}.

Accordingly, the safeguarded update is defined by
\begin{equation}\label{eq:G_update}G^k=\begin{cases}\operatorname{DBFGS}_{\delta_G}
\left(G^{k-1},s^k,y^{F,k}\right),&(s^k)^{\top}y^{F,k}>0,\\[2mm]G^{k-1},&(s^k)^{\top}y^{F,k}\leq0,\end{cases}\qquad k\geq1.\end{equation}
This curvature qualification prevents systematic negative objective curvature from being converted artificially into positive BFGS curvature. The positive-definite SQCQP objective model is instead retained whenever the objective secant pair fails the positive-curvature test.

The SQP model uses $B^k$ together with first-order constraint linearizations, whereas the SQCQP model uses $G^k$ together with the explicit second-order constraint-curvature terms. This separation avoids double counting of the constraint curvature in the SQCQP model.
Throughout the convergence analysis, the resulting quasi-Newton matrices satisfy the uniform spectral bounds
\begin{equation}\label{eq:method_B_G_bounds}
\boxed{\underline b I \preceq B^k \preceq \overline b I,\qquad \underline g I
\preceq G^k \preceq \overline g I, \qquad 0<\underline b\leq\overline b<\infty,
\qquad 0<\underline g\leq\overline g<\infty.}\end{equation}
The lower spectral bounds guarantee uniform positive curvature in the respective quadratic objective models, while the upper bounds prevent the quasi-Newton approximations from becoming arbitrarily large.

\subsection{RCMCI-based adaptive switching}
\label{subsec:method_rcmci}
The central component of AVR-SS(QC)QP is an adaptive mechanism that determines whether the locally cheaper stochastic SQP model provides an adequate approximation of the constraint geometry or whether the second-order constraint information retained by the stochastic SQCQP model is required. This decision is based on the Relative Constraint-Model Curvature Indicator (RCMCI). The stochastic SQP model uses first-order constraint linearizations. Consequently, for the $i$th equality constraint and the $j$th inequality constraint, the second-order terms
\[ \frac{1}{2}d^{\top}\nabla^2 h_i(x^k)d \quad\text{and}\quad \frac{1}{2}d^{\top}\nabla^2 c_j(x^k)d \]
represent the local curvature information neglected by the SQP model. The RCMCI measures the magnitude of these neglected terms relative to a constraint-scaled reference along the current stochastic SQP direction.
At every iteration, the stochastic SQP subproblem \eqref{eq:sqp_subproblem} is first solved, and its solution \(d_{\mathrm{SQP}}^k\) is used as the common probe direction.This probe is computed irrespective of whether SQP or SQCQP is subsequently selected by the RCMCI. Hence, model selection is based on a common, computationally inexpensive reference direction. For each equality constraint, define the neglected second-order constraint correction along the SQP probe direction by
\[q_{h_i,k}=\frac{1}{2}\left(d_{\mathrm{SQP}}^k\right)^{\top}\nabla^2h_i(x^k)
d_{\mathrm{SQP}}^k,\qquad i=1,\ldots,m,\]
and, similarly, for each inequality constraint define
\[q_{c_j,k}=\frac{1}{2}\left(d_{\mathrm{SQP}}^k\right)^{\top}\nabla^2c_j(x^k)
d_{\mathrm{SQP}}^k,\qquad j=1,\ldots,p.\]
The magnitude of each neglected curvature term is normalized by a constraint-scaled reference. For the $i$th equality constraint, define
\begin{equation}\label{eq:method_s_hi}
s_{h_i,k}=\max\left\{\left|h_i(x^k)+\nabla h_i(x^k)^{\top}d_{\mathrm{SQP}}^k \right|,\;\gamma_d\|\nabla h_i(x^k)\|_2\|d_{\mathrm{SQP}}^k\|_2,\;\epsilon_s \right\},\end{equation}
and, for the $j$th inequality constraint, define
\begin{equation}\label{eq:method_s_cj}
s_{c_j,k}=\max\left\{\left|c_j(x^k)+\nabla c_j(x^k)^{\top}d_{\mathrm{SQP}}^k \right|,\;\gamma_d\|\nabla c_j(x^k)\|_2\|d_{\mathrm{SQP}}^k\|_2,\;\epsilon_s
\right\}.\end{equation}

The first term in each scale represents the magnitude of the constraint prediction produced by the first-order SQP model. The second term provides a directional scaling based on the corresponding constraint gradient and the probe direction, while $\epsilon_s>0$ prevents the normalization from becoming arbitrarily small when the linearized constraint prediction is zero or numerically negligible. The directional term is motivated by the bound $\left|\nabla r(x^k)^{\top}d\right|\leq\|\nabla r(x^k)\|_2\|d\|_2$ for any differentiable scalar constraint function \(r\).

The normalized relative curvature ratios are defined by
\[\rho_{h_i,k}=\frac{|q_{h_i,k}|}{s_{h_i,k}},\qquad i=1,\ldots,m,\ \ \ and  \   \ \ \rho_{c_j,k}=\frac{|q_{c_j,k}|}{s_{c_j,k}},\qquad j=1,\ldots,p.\]

The raw RCMCI is then defined as
\begin{equation}\label{eq:method_rcmci}
\boxed{\eta_k=\max\left\{\max_{1\leq i\leq m}\rho_{h_i,k},\;\max_{1\leq j\leq p}\rho_{c_j,k}\right\}.}\end{equation}

The maximum is taken over all existing constraints. Thus, the presence of a substantial relative second-order correction in even one constraint is sufficient to indicate that the first-order SQP constraint model may be locally inadequate. Consequently, \(\eta_k\) provides a conservative measure of the largest neglected constraint-curvature effect.
Because the SQP probe direction depends on the stochastic estimator $g^k$, the raw indicator $\eta_k$ may exhibit iteration-to-iteration fluctuations. To reduce sensitivity to such stochastic fluctuations, we use the exponentially smoothed indicator  
\begin{equation}\label{eq:method_smoothed_rcmci}
\boxed{\bar{\eta}_k=\beta_\eta\bar{\eta}_{k-1}+(1-\beta_\eta)\eta_k,\qquad 0\leq\beta_\eta<1.}\end{equation}
and the smoothing recursion is initialized by $\bar{\eta}_{-1}=0.$
Two switching thresholds are selected such that $0<\eta_{\mathrm{low}}<\eta_{\mathrm{high}},$ and the initial model is set as $\mathcal{M}_{-1}=\mathrm{SQP}.$

The RCMCI-based model-selection rule is defined through the following hysteresis mechanism:
\begin{equation}\label{eq:method_switching_rule}
\boxed{\mathcal{M}_k=\begin{cases}\mathrm{SQP},&\bar{\eta}_k\leq\eta_{\mathrm{low}},\\[1mm]\mathrm{SQCQP},&\bar{\eta}_k\geq\eta_{\mathrm{high}},\\[1mm]\mathcal{M}_{k-1},&\eta_{\mathrm{low}}<\bar{\eta}_k<\eta_{\mathrm{high}}.\end{cases}} \end{equation}
When the relative neglected constraint curvature is sufficiently small, the stochastic SQP model is selected. When the indicator exceeds the upper threshold, the stochastic SQCQP model is selected so that the second-order constraint-curvature information is retained. When the smoothed indicator lies strictly between the two thresholds, the model selected at the preceding iteration is retained.This hysteresis mechanism prevents rapid alternation between SQP and SQCQP caused by stochastic fluctuations near the switching thresholds.
For the selected model, the final search direction is therefore
\begin{equation}\label{eq:method_selected_direction}
\widetilde d^k=\begin{cases}d_{\mathrm{SQP}}^k,&\mathcal{M}_k=\mathrm{SQP},\\[1mm]
d_{\mathrm{SQCQP}}^k,& \mathcal{M}_k=\mathrm{SQCQP}.\end{cases}\end{equation}
The direction \(\widetilde d^k\) represents the local-model candidate before merit-function globalization. If the SQCQP candidate fails the merit-descent safeguard introduced in the subsequent globalization subsection, the already computed SQP probe direction is used instead.The effective search direction \(d^k\) passed to the Armijo line search may therefore differ from \(\widetilde d^k\) only through this globalization safeguard.

The RCMCI therefore separates model selection from step acceptance: $\bar{\eta}_k$ determines whether the local SQP or SQCQP model is appropriate, while the merit-function globalization and Armijo backtracking determine the accepted step length. In this way, the method activates the more expensive quadratic-constraint model only when the estimated contribution of the neglected constraint curvature is sufficiently significant.
\subsection{Direction Safeguard and Merit-Function Globalization}
\label{subsec:method_globalization}
The RCMCI mechanism determines whether the SQP or SQCQP local model is preferred at iteration \(k\). Although the two subproblems employ different curvature representations, their directions remain locally comparable near a regular solution; this stability property is established in Lemma~\ref{lem:4.2}. The direction ultimately passed to globalization is denoted by \(d^k\).To globalize the local step, AVR-SS(QC)QP employs the exact \(\ell_1\) merit function
\begin{equation}\label{eq:method_merit}
\Phi(x;\rho)=F(x)+\rho V(x),\qquad V(x)=\|h(x)\|_1+\|c(x)^+\|_1,\end{equation}
where \(c(x)^+\) denotes the componentwise positive part. For the working direction \(d^k\), define the stochastic merit directional quantity
\begin{equation} \label{eq:method_merit_direction}
D_k^\Phi=(g^k)^\top d^k+\rho_k DV(x^k)[d^k].\end{equation}

If an RCMCI-selected SQCQP direction satisfies \(D_k^\Phi<0\), it is passed directly to the line search. If \(D_k^\Phi\geq0\) but \(DV(x^k)[d_{\mathrm{SQCQP}}^k]<0\), the direction is kept fixed and the penalty is increased geometrically,$\rho_k\leftarrow\gamma_\rho\rho_k,\ \gamma_\rho>1,$
until \(D_k^\Phi<0\). When \(DV(x^k)[d_{\mathrm{SQCQP}}^k]\geq0\), increasing the penalty cannot enforce merit descent; hence the already-computed SQP probe and its local multipliers are used as the globalization fallback. The finite construction of such an effective merit-descent direction is proved in Lemma~\ref{lem:4.4}.

Starting from \(\alpha=1\), Armijo backtracking generates \(\alpha=\beta^\ell\), \(\ell=0,1,\ldots\), with \(0<\beta<1\), and accepts the first step satisfying
\begin{equation}\label{eq:method_armijo}
\Phi(x^k+\alpha d^k;\rho_k)\leq\Phi(x^k;\rho_k)+\sigma\alpha D_k^\Phi,\qquad 0<\sigma<1.\end{equation}
The iterate is then updated as $x^{k+1}=x^k+\alpha_k d^k.$ The effect of the stochastic gradient error and finite termination of the line search are established in Lemmas~\ref{lem:4.3} and~\ref{lem:4.5}.

Let \((\widehat{\lambda}^k,\widehat{\mu}^k)\) denote the local multiplier pair associated with the effective direction. After an accepted regular step,
\[\lambda^{k+1}=\widehat{\lambda}^k,\qquad \mu^{k+1}=\widehat{\mu}^k.\]
With $M_{k+1}=\max\left\{1,\|\lambda^{k+1}\|_\infty,\|\mu^{k+1}\|_\infty\right\},$
the post-step penalty update is\begin{equation}\label{eq:method_penalty_update}
\rho_{k+1}=\begin{cases}\rho_k,&\rho_k\geq\gamma_\rho M_{k+1},\\[1mm] \gamma_\rho\rho_k,&\rho_k<\gamma_\rho M_{k+1}.\end{cases}\end{equation}
The boundedness of the multiplier estimates and finite stabilization of \(\rho_k\) are established in Section~\ref{subsec:merit_globalization}. For a numerically zero effective direction, convergence is declared only after verification of the exact KKT residual. All Armijo merit evaluations use the exact finite-sum objective and nonlinear constraint mappings; stochasticity enters the globalization mechanism through the estimator \(g^k\).

\begin{algorithm}[!ht]
\small
\caption{Adaptive Variance-Reduced Stochastic Sequential
(Quadratic Constrained) Quadratic Programming (AVR-SS(QC)QP)}
\label{alg:avr_sqcp}
\begin{algorithmic}[1]

\Require
$x^0,\lambda^0,\mu^0,B^0\succ0,G^0\succ0,\rho_0>0$;
$b_0,b_{\max},\gamma_b,q$;
$\eta_{\mathrm{low}},\eta_{\mathrm{high}},\beta_\eta,\gamma_d,\epsilon_s$;
$\delta_B,\delta_G,\sigma,\beta,\gamma_\rho$;
$\varepsilon_{\mathrm{SQCQP}},\varepsilon_{\mathrm{KKT}}$.

\State
$\mathcal{M}_{-1}\gets\mathrm{SQP}$,
$\bar{\eta}_{-1}\gets0$.

\For{$k=0,1,2,\ldots$}

    \State Compute the SPIDER estimator $g^k$ from
    \eqref{eq:spider_estimator} using the adaptive batch size $b_k$.

    \State
    $\nabla_x\mathcal{L}^k
    \gets
    g^k+J_h(x^k)^\top\lambda^k+J_c(x^k)^\top\mu^k$.

    \If{$k\geq1$}
        \State
        $s^k\gets x^k-x^{k-1}$,
        $y^{\mathcal{L},k}\gets
        \nabla_x\mathcal{L}^k-\nabla_x\mathcal{L}^{k-1}$,
        $y^{F,k}\gets g^k-g^{k-1}$.
        \State
        $B^k\gets
        \operatorname{DBFGS}_{\delta_B}
        (B^{k-1},s^k,y^{\mathcal{L},k})$.
        \State
        $G^k\gets
        \operatorname{DBFGS}_{\delta_G}
        (G^{k-1},s^k,y^{F,k})$
        if $(s^k)^\top y^{F,k}>0$;
        otherwise $G^k\gets G^{k-1}$.
    \EndIf

    \State Solve \eqref{eq:sqp_subproblem} to obtain
    $d_{\mathrm{SQP}}^k$ and
    $(\widehat{\lambda}_{\mathrm{SQP}}^k,
    \widehat{\mu}_{\mathrm{SQP}}^k)$.

    \State Compute $\eta_k$ from \eqref{eq:method_rcmci} and
    $\bar{\eta}_k$ from \eqref{eq:method_smoothed_rcmci};
    determine $\mathcal{M}_k$ using
    \eqref{eq:method_switching_rule}.

    \If{$\mathcal{M}_k=\mathrm{SQP}$}
        \State
        $(d^k,\widehat{\lambda}^k,\widehat{\mu}^k)
        \gets
        (d_{\mathrm{SQP}}^k,
        \widehat{\lambda}_{\mathrm{SQP}}^k,
        \widehat{\mu}_{\mathrm{SQP}}^k)$.
    \Else
        \State Solve \eqref{eq:sqcqp_subproblem}, initialized by
        $d_{\mathrm{SQP}}^k$, to tolerance
        $\varepsilon_{\mathrm{SQCQP}}$.
        \State Obtain
        $(d^k,\widehat{\lambda}^k,\widehat{\mu}^k)
        \gets
        (d_{\mathrm{SQCQP}}^k,
        \widehat{\lambda}_{\mathrm{SQCQP}}^k,
        \widehat{\mu}_{\mathrm{SQCQP}}^k)$.
    \EndIf

    \State Compute
    $D_k^\Phi=(g^k)^\top d^k+\rho_kDV(x^k)[d^k]$.

    \If{$\mathcal{M}_k=\mathrm{SQCQP}$,
    $D_k^\Phi\geq0$, and $DV(x^k)[d^k]\geq0$}
        \State Replace
        $(d^k,\widehat{\lambda}^k,\widehat{\mu}^k)$
        by the SQP probe quantities and recompute $D_k^\Phi$.
    \EndIf

    \If{$d^k$ is numerically zero}
        \State Check the exact KKT residual using both the current
        and selected local multiplier pairs; terminate if either passes
        $\varepsilon_{\mathrm{KKT}}$.
    \EndIf

    \While{$D_k^\Phi\geq0$}
        \If{$DV(x^k)[d^k]\geq0$}
            \State Declare globalization failure.
        \Else
            \State
            $\rho_k\gets\gamma_\rho\rho_k$ and recompute $D_k^\Phi$.
        \EndIf
    \EndWhile

    \State Determine $\alpha_k$ by Armijo backtracking using
    \eqref{eq:method_armijo}.

    \State
    $x^{k+1}\gets x^k+\alpha_kd^k$,
    $\lambda^{k+1}\gets\widehat{\lambda}^k$,
    $\mu^{k+1}\gets\widehat{\mu}^k$.

    \State
    $M_{k+1}\gets
    \max\{1,\|\lambda^{k+1}\|_\infty,
    \|\mu^{k+1}\|_\infty\}$ and update $\rho_{k+1}$
    using \eqref{eq:method_penalty_update}.

    \If{$\|R_{\mathrm{KKT}}
    (x^{k+1},\lambda^{k+1},\mu^{k+1})\|_2
    \leq\varepsilon_{\mathrm{KKT}}$}
        \State \Return
        $(x^{k+1},\lambda^{k+1},\mu^{k+1})$.
    \EndIf

\EndFor

\end{algorithmic}
\end{algorithm}

\section{Convergence Analysis}\label{sec:convergence}
This section establishes the convergence properties of AVR-SS(QC)QP. The analysis proceeds through the consistency of the variance-reduced gradient estimator, local stability of the SQP--SQCQP models, merit-function globalization, a coupled stochastic Lyapunov argument, and finally the KKT convergence of accumulation points. The asymptotic behavior of the RCMCI switching mechanism is established as a consequence of the resulting local KKT regime.

We first state the structural assumptions used throughout the convergence analysis. Properties such as boundedness of the multiplier sequence and finite stabilization of the penalty parameter are not assumed; they are established subsequently in Lemma~\ref{lem:4.6} and Theorem~\ref{thm:4.1}, respectively.

\begin{assumption}[Smoothness and bounded constraint curvature]\label{ass:4.1}
There exists a neighborhood $\mathcal{N}\subset\mathbb{R}^{n}$ containing
the relevant iterates such that, for every $r=1,\ldots,N$,
 \ $F_r$ is twice continuously differentiable on $\mathcal{N}$ and its gradient is $L_F$-Lipschitz:
\begin{equation}\label{eq:gradient_lipschitz}
\|\nabla F_r(x)-\nabla F_r(y)\|_2 \leq L_F\|x-y\|_2, \qquad x,y\in\mathcal{N}. \end{equation}
Moreover, $h$ and $c$ are twice continuously differentiable on $\mathcal{N}$, and there exist finite constants $M_h,M_c>0$ such that
\[ \|\nabla^2 h_i(x)\|_2\leq M_h, \qquad i=1,\ldots,m, \]
and \[ \|\nabla^2 c_j(x)\|_2\leq M_c, \qquad j=1,\ldots,p, \] for all $x\in\mathcal{N}$.
\end{assumption}
\begin{assumption}[Mini-batch sampling]\label{ass:4.2}
At every non-refresh iteration $k$, conditional on the filtration $\mathcal{F}_{k-1}$, the mini-batch
$\mathcal{B}_k\subseteq\{1,\ldots,N\}$ of cardinality
$|\mathcal{B}_k|=b_k$ is sampled independently and uniformly. Consequently, for every $x$ in the relevant neighborhood,
\begin{equation}\label{eq:unbiased_minibatch}
\mathbb{E}\left[\nabla F_{b_k}(x)\mid\mathcal{F}_{k-1}\right]=\nabla F(x), \end{equation}
When \(b_k=N\), the complete finite-sum index set \(\{1,\ldots,N\}\) is used.
\end{assumption}
\begin{assumption}[Uniform spectral bounds of the quasi-Newton matrices]\label{ass:4.3}
There exist constants $0<\underline b\leq\overline b<\infty$ and
$0<\underline g\leq\overline g<\infty$ such that, for all sufficiently
large $k$,
\begin{equation}\label{eq:BG_uniform_bounds}
\underline b I\preceq B^k\preceq\overline b I,\qquad \underline g I\preceq G^k\preceq\overline g I.\end{equation}
Thus, both local quadratic objective models remain uniformly
positive definite and bounded.
\end{assumption}

\begin{assumption}[Compactness of the primal iterates]\label{ass:4.4}
The primal sequence $\{x^k\}$ generated by AVR-SS(QC)QP remains in a
compact subset $\mathcal{X}\subset\mathcal{N}$.
\end{assumption}

\begin{assumption}[Local regularity and multiplier consistency]\label{ass:4.5}
At every accumulation point $x^\star$ of the primal sequence, the
gradients of the active equality and inequality constraints are
linearly independent. Moreover, in a neighborhood of each relevant
accumulation point, the active-set KKT systems associated with the
local SQP and SQCQP subproblems are strongly regular, with a uniformly bounded local inverse-Lipschitz modulus.
Let $(\widehat{\lambda}^k,\widehat{\mu}^k)$ denote the local multiplier pair associated with the effective direction $d^k$ passed to globalization. Along every relevant convergent subsequence,
\[\|\widehat{\lambda}^k-\lambda^k\|_2+\|\widehat{\mu}^k-\mu^k\|_2\longrightarrow0. \]

Furthermore, every corresponding accumulation point $(x^\star,\lambda^\star,\mu^\star)$ satisfies the second-order sufficient condition
\[w^\top\nabla_{xx}^2\mathcal{L}(x^\star,\lambda^\star,\mu^\star)w>0\]
for every nonzero direction $w$ in the associated critical cone.
\end{assumption}
\subsection{SPIDER Estimator Consistency}\label{subsec:spider_consistency}
The stochastic-gradient properties required in the subsequent globalization and KKT analysis are established first. Recall that $e^k=g^k-\nabla F(x^k)$ denotes the SPIDER estimation error. By construction of \eqref{eq:spider_estimator}, \(e^k=0\) at every refresh iteration.

\begin{lemma}[SPIDER error recursion and conditional variance]\label{lem:4.1}
Suppose Assumptions~\ref{ass:4.1} and~\ref{ass:4.2} hold. At every non-refresh iteration \(k\), $e^k=e^{k-1}+\xi^k,$ where
$\xi^k=\left[\nabla F_{\mathcal{B}_k}(x^k)-\nabla F_{\mathcal{B}_k}(x^{k-1})\right]-\left[\nabla F(x^k)-\nabla F(x^{k-1})\right].$
Moreover,
\[\mathbb{E}\left[\xi^k\mid\mathcal{F}_{k-1}\right]=0,and \
\left[\|\xi^k\|_2^2\mid\mathcal{F}_{k-1}\right]\leq\frac{L_F^2}{b_k}\|x^k-x^{k-1}\|_2^2.\]\end{lemma}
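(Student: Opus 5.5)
The plan is to unfold the SPIDER recursion \eqref{eq:spider_estimator} directly, and then to treat the mini-batch gradient difference as an average of bounded, mean-zero terms.

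\emph{Step 1 (error recursion).} At a non-refresh iteration, subtract $\nabla F(x^k)$ from both sides of \eqref{eq:spider_estimator}, and add and subtract $\nabla F(x^{k-1})$:
\[
e^k=g^{k-1}-\nabla F(x^{k-1})+\bigl[\nabla F_{\mathcal{B}_k}(x^k)-\nabla F_{\mathcal{B}_k}(x^{k-1})\bigr]-\bigl[\nabla F(x^k)-\nabla F(x^{k-1})\bigr].
\]
The right-hand side is exactly $e^{k-1}+\xi^k$. This step is purely algebraic.

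\emph{Step 2 (unbiasedness).} I take $x^k$ and $x^{k-1}$ to be $\mathcal{F}_{k-1}$-measurable. This holds because $x^k$ is determined by $g^{k-1}$ and by deterministic quantities, so $\mathcal{F}_{k-1}$ should contain everything generated before $\mathcal{B}_k$ is drawn. Applying \eqref{eq:unbiased_minibatch} at the two fixed points $x^k$ and $x^{k-1}$, and using linearity of conditional expectation, gives $\mathbb{E}[\xi^k\mid\mathcal{F}_{k-1}]=0$.

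\emph{Step 3 (variance bound).} Define $a_r=\nabla F_r(x^k)-\nabla F_r(x^{k-1})$ and $\bar a=\frac1N\sum_r a_r$. Then
\[
\xi^k=\frac1{b_k}\sum_{r\in\mathcal{B}_k}(a_r-\bar a).
\]
I would read ``sampled independently and uniformly'' as $b_k$ i.i.d.\ uniform draws, i.e.\ sampling with replacement. Under that reading the cross terms vanish in conditional expectation, since each $a_{r}-\bar a$ has conditional mean zero and the draws are independent. This leaves
\[
\mathbb{E}[\|\xi^k\|_2^2\mid\mathcal{F}_{k-1}]=\frac1{b_k}\cdot\frac1N\sum_r\|a_r-\bar a\|_2^2 .
\]
Bounding the variance by the second moment gives $\frac1N\sum_r\|a_r-\bar a\|^2\le\frac1N\sum_r\|a_r\|^2$. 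By \eqref{eq:gradient_lipschitz}, each $\|a_r\|_2\le L_F\|x^k-x^{k-1}\|_2$, and this requires $x^k,x^{k-1}\in\mathcal{N}$, which Assumption~\ref{ass:4.4} supplies. Combining these yields the claimed bound $\frac{L_F^2}{b_k}\|x^k-x^{k-1}\|_2^2$.

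\emph{Main obstacle.} The only delicate point is the sampling model in Step 3. If the batch is instead drawn without replacement, the cross terms are not zero: each equals $-\frac{1}{N-1}\cdot\frac1N\sum_r\|a_r-\bar a\|^2$. The variance then picks up the finite-population factor $\frac{N-b_k}{N-1}\le1$, which still gives the same bound. In particular it gives zero when $b_k=N$, consistent with the last sentence of Assumption~\ref{ass:4.2}. I would therefore carry out the computation in a way that covers both cases, noting the factor $\frac{N-b_k}{N-1}\le 1$. I would also state explicitly the measurability of $x^k$ with respect to $\mathcal{F}_{k-1}$. The statement's conditional second-moment expression is missing its $\mathbb{E}$, and I read it as $\mathbb{E}[\|\xi^k\|_2^2\mid\mathcal{F}_{k-1}]$.
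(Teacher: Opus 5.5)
Your proposal is correct and follows the paper's proof. It uses the same algebraic decomposition $e^k=e^{k-1}+\xi^k$, unbiasedness from Assumption~\ref{ass:4.2} at the $\mathcal{F}_{k-1}$-measurable points $x^k,x^{k-1}$, and a bound on the conditional variance by $\frac{1}{b_k}\cdot\frac{1}{N}\sum_r\|a_r\|_2^2$ combined with the per-sample Lipschitz estimate. You make explicit the ``standard variance bound for uniform mini-batch sampling'' that the paper invokes without computation, including the factor $\frac{N-b_k}{N-1}\le 1$ for sampling without replacement. One small point: you do not need Assumption~\ref{ass:4.4}, which is not among the lemma's hypotheses. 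Assumption~\ref{ass:4.1} already posits a neighborhood $\mathcal{N}$ containing the relevant iterates.
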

\begin{proof}
At a non-refresh iteration, $g^k=\nabla F_{\mathcal{B}_k}(x^k)-\nabla F_{\mathcal{B}_k}(x^{k-1})+g^{k-1}.$ Subtracting \(\nabla F(x^k)\), and adding and subtracting\(\nabla F(x^{k-1})\), gives\[e^k=e^{k-1}+\xi^k.\]
The conditional zero-mean property follows directly from the unbiased
mini-batch sampling in Assumption~\ref{ass:4.2}.
For the variance bound, define\[\Delta_r^k=\nabla F_r(x^k)-\nabla F_r(x^{k-1}).\]
The mini-batch gradient difference is the sample average of \(\Delta_r^k\), whereas its population mean is $\overline{\Delta}^k=\frac{1}{N}\sum_{r=1}^{N}\Delta_r^k=\nabla F(x_k)-\nabla F(x_{k-1}).$
Using the standard variance bound for uniform mini-batch sampling and
Assumption~\ref{ass:4.1},\[\|\Delta_r^k\|_2\leq L_F\|x^k-x^{k-1}\|_2,\]which yields
\[\mathbb{E}\left[\|\xi^k\|_2^2\mid\mathcal{F}_{k-1}\right]\leq\frac{L_F^2}{b_k}
\|x^k-x^{k-1}\|_2^2.\]\end{proof}

\begin{theorem}[Almost-sure consistency of the SPIDER estimator] \label{thm:spider_consistency}
Suppose Assumptions~\ref{ass:4.1} and~\ref{ass:4.2} hold and that
$\mathbb{E}\left[\sum_{k=1}^{\infty}\|x^k-x^{k-1}\|_2^2\right]<\infty.$
Then\[\|g^k-\nabla F(x^k)\|_2\longrightarrow0\qquad\text{a.s.}\]\end{theorem}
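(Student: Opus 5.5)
The plan is to use Lemma~\ref{lem:4.1} together with the fact that the error is reset at every refresh. This makes $e^k$ a finite sum of martingale increments over the current epoch. It then suffices to show $\sum_k \|e^k\|_2^2<\infty$ a.s., which forces $\|e^k\|_2\to0$.

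\emph{Epoch representation.} For each $k$, let $k_0=q\lfloor k/q\rfloor$ be the most recent refresh index, so that $e^{k_0}=0$. Iterating the recursion of Lemma~\ref{lem:4.1} gives
\[
e^k=\sum_{t=k_0+1}^{k}\xi^t,
\]
a sum of at most $q-1$ terms. Each $\xi^t$ is $\mathcal{F}_t$-measurable with $\mathbb{E}[\xi^t\mid\mathcal{F}_{t-1}]=0$. The cross terms therefore vanish in expectation: for $s<t$ we have $\mathbb{E}[(\xi^s)^\top\xi^t]=\mathbb{E}[(\xi^s)^\top\mathbb{E}[\xi^t\mid\mathcal{F}_{t-1}]]=0$. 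Using the conditional variance bound of Lemma~\ref{lem:4.1} and $b_t\ge b_0\ge1$, we obtain
\[
\mathbb{E}\|e^k\|_2^2=\sum_{t=k_0+1}^{k}\mathbb{E}\|\xi^t\|_2^2\le \frac{L_F^2}{b_0}\sum_{t=k_0+1}^{k}\mathbb{E}\|x^t-x^{t-1}\|_2^2 .
\]
If one prefers to avoid the orthogonality argument, the cruder bound $\|e^k\|^2\le (q-1)\sum_t\|\xi^t\|^2$ works equally well, at the cost of a factor $q$.

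\emph{Summation.} Summing over $k\ge1$, each index $t$ appears in the bound for at most $q-1$ values of $k$, namely those in the same epoch with $k\ge t$. Hence
\[
\mathbb{E}\Big[\sum_{k=1}^\infty\|e^k\|_2^2\Big]=\sum_{k=1}^\infty\mathbb{E}\|e^k\|_2^2\le\frac{(q-1)L_F^2}{b_0}\,\mathbb{E}\Big[\sum_{t=1}^\infty\|x^t-x^{t-1}\|_2^2\Big]<\infty ,
\]
where the interchange of sum and expectation is justified by Tonelli's theorem. A nonnegative random variable with finite expectation is finite almost surely. So $\sum_k\|e^k\|_2^2<\infty$ a.s., which gives $\|e^k\|_2\to0$ a.s., that is, $\|g^k-\nabla F(x^k)\|_2\to0$ a.s.

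\emph{Main obstacle.} The delicate point is measurability and the use of the conditional variance bound. The bound in Lemma~\ref{lem:4.1} is stated conditionally on $\mathcal{F}_{k-1}$ with $x^k$ treated as known. We must therefore check that $x^k$, and hence $x^k-x^{k-1}$, is $\mathcal{F}_{k-1}$-measurable: $x^k$ is determined before $\mathcal{B}_k$ is drawn, so the filtration should be indexed to include the batches up to $k-1$. The bound then holds pathwise in the conditional sense, and taking total expectations is legitimate.

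If the deterministic-iterate structure were problematic, a fallback is available. One can apply a conditional Borel--Cantelli or Robbins--Siegmund argument to the nonnegative sequence $\|e^k\|^2$, with $\mathbb{E}[\|e^k\|^2\mid\mathcal{F}_{k-1}]\le\|e^{k-1}\|^2+\frac{L_F^2}{b_0}\|x^k-x^{k-1}\|^2$ within epochs and resets at refreshes. This route needs only that the perturbation terms be summable a.s., which follows from the stated expectation hypothesis.
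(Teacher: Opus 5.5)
Your proposal is correct and follows essentially the same route as the paper: the epoch representation $e^k=\sum_{t=k_0+1}^{k}\xi^t$, orthogonality of the martingale increments, and the bounded overlap count across refresh windows, which together give $\sum_k\mathbb{E}\|e^k\|_2^2<\infty$. The differences are cosmetic. You conclude via Tonelli and almost-sure finiteness of $\sum_k\|e^k\|_2^2$ rather than the paper's Markov-plus-Borel--Cantelli step, you keep the sharper constant $(q-1)L_F^2/b_0$ in place of $qL_F^2$, and you make explicit the $\mathcal{F}_{k-1}$-measurability of $x^k$ that the paper uses tacitly.
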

\begin{proof}
Let \(s(k)\) denote the most recent refresh iteration at or before \(k\). Since the refresh period \(q\) is finite by construction, $0\leq k-s(k)\leq q-1,$  and $e^{s(k)}=0.$ Repeated application of Lemma~\ref{lem:4.1} therefore gives
\[e^k=\sum_{j=s(k)+1}^{k}\xi^j.\]
Since the increments are conditionally centered, the cross terms vanish in expectation. Hence, by Lemma~\ref{lem:4.1},\[\mathbb{E}\|e^k\|_2^2\leq L_F^2 \sum_{j=s(k)+1}^{k}\mathbb{E}\|x^j-x^{j-1}\|_2^2.\]
Each iterate difference occurs in at most \(q\) such refresh windows,and therefore
\[\sum_{k=1}^{\infty}\mathbb{E}\|e^k\|_2^2\leq qL_F^2 \mathbb{E}\left[ \sum_{k=1}^{\infty} \|x^k-x^{k-1}\|_2^2\right]<\infty.\]
For every \(\varepsilon>0\), Markov's inequality gives
\[\sum_{k=1}^{\infty}\mathbb{P}\left(\|e^k\|_2>\varepsilon\right)<\infty.\]
The first Borel--Cantelli lemma then implies $\|e^k\|_2\longrightarrow0\ \text{a.s.},$ which proves the result.
\end{proof}

\begin{corollary}[Stochastic Lagrangian-gradient consistency]\label{cor:4.1}
Under the conditions of Theorem~\ref{thm:spider_consistency},
\[\left\|\nabla_x\mathcal{L}^k-\nabla_x\mathcal{L}(x^k,\lambda^k,\mu^k)\right\|_2
\longrightarrow0\qquad\text{a.s.}\]\end{corollary}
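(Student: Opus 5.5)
The corollary follows from the identity \eqref{eq:lagrangian_gradient_error} combined with Theorem~\ref{thm:spider_consistency}. The only thing to check is that the multiplier terms cancel exactly, with no approximation.

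First, expand both gradients at the same triple $(x^k,\lambda^k,\mu^k)$:
\[\nabla_x\mathcal{L}^k=g^k+J_h(x^k)^\top\lambda^k+J_c(x^k)^\top\mu^k,\qquad \nabla_x\mathcal{L}(x^k,\lambda^k,\mu^k)=\nabla F(x^k)+J_h(x^k)^\top\lambda^k+J_c(x^k)^\top\mu^k.\]
Both expressions use the same Jacobians and the same multiplier estimates $(\lambda^k,\mu^k)$ taken from the iteration. Subtracting, the constraint terms cancel identically, pathwise, for every realization and every $k$. This gives $\nabla_x\mathcal{L}^k-\nabla_x\mathcal{L}(x^k,\lambda^k,\mu^k)=g^k-\nabla F(x^k)=e^k$, which is exactly \eqref{eq:lagrangian_gradient_error}.

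Taking norms, $\|\nabla_x\mathcal{L}^k-\nabla_x\mathcal{L}(x^k,\lambda^k,\mu^k)\|_2=\|e^k\|_2$ holds for all $k$ on the whole sample space. Theorem~\ref{thm:spider_consistency} applies under the same hypotheses (Assumptions~\ref{ass:4.1}, \ref{ass:4.2} and the summability of expected squared steps). It gives $\|e^k\|_2\to0$ on an event of probability one. On that event the left-hand side also tends to zero, which proves the claim.

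There is no real obstacle. The one point worth stating explicitly is that the argument uses no bound on the multipliers $(\lambda^k,\mu^k)$ and no bound on $J_h,J_c$: the cancellation is exact, so the corollary holds even though multiplier boundedness is only established later (Lemma~\ref{lem:4.6}). The almost-sure set is therefore the same one produced by Theorem~\ref{thm:spider_consistency}.
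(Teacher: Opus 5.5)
Your proposal is correct and follows the paper's proof: the Jacobian and multiplier terms cancel exactly, so the difference is $e^k=g^k-\nabla F(x^k)$, and Theorem~\ref{thm:spider_consistency} then gives $\|e^k\|_2\to0$ almost surely. Your remark that no bound on the multipliers or Jacobians is needed is accurate, and it is also implicit in the paper's one-line argument.
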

\begin{proof}
By the definitions of the stochastic and exact Lagrangian gradients,
\[\nabla_x\mathcal{L}^k-\nabla_x\mathcal{L}(x^k,\lambda^k,\mu^k)=g^k-\nabla F(x^k)
=e^k.\]The result follows immediately from Theorem~\ref{thm:spider_consistency}.
\end{proof}

\subsection{Local SQP--SQCQP Direction Stability}\label{subsec:direction_stability}

The stochastic SQP and SQCQP subproblems employ distinct curvature models.The following result shows that the two directions remain locally comparable near a regular solution.
Let \(\mathcal{A}_k\) denote the locally identified active inequality set and define $A^k=
\begin{bmatrix}
J_h(x^k)\ &
J_{c_{\mathcal{A}k}}(x^k)
\end{bmatrix}^{\top},\ r^k=\begin{bmatrix}h(x^k)\ &c_{\mathcal{A}_k}(x^k)\end{bmatrix}^{\top},$
and the corresponding local multiplier vector is written as
$\nu=\begin{bmatrix}\lambda\ &\mu_{\mathcal{A}_k}\end{bmatrix}^{\top}.$
Define the multiplier-weighted constraint-curvature matrix as \[H^k(\nu)=\sum_{i=1}^{m}\lambda_i\nabla^2h_i(x^k)+\sum_{j\in\mathcal{A}_k}\mu_j\nabla^2c_j(x^k),\]
let \(q^k(d)\) denote the vector of quadratic constraint corrections,
$q^k(d)=\begin{bmatrix}\frac12 d^\top\nabla^2h_i(x^k)d\\[1mm]\frac12 d^\top\nabla^2c_j(x^k)d\end{bmatrix}_{i=1,\ldots,m;\, j\in\mathcal{A}_k}
$ and $z_{\mathrm{SQP}}^k=\begin{bmatrix}d_{\mathrm{SQP}}^k\\\nu_{\mathrm{SQP}}^k
\end{bmatrix}$ denote a locally relevant KKT pair of the stochastic SQP subproblem.
On the identified active set, its KKT equations are
\begin{equation}\label{eq:sqp_kkt_system}
g^k + B^k d_{\mathrm{SQP}}^k + (A^k)^\top\nu_{\mathrm{SQP}}^k = 0, \qquad r^k + A^k d_{\mathrm{SQP}}^k = 0.\end{equation}
For the SQCQP model, define the local KKT mapping as
\[\Psi^k(d,\nu)=\begin{bmatrix}g^k+G^k d+(A^k)^\top\nu+H^k(\nu)d\\[1mm]r^k+A^k d+q^k(d)\end{bmatrix}.\]

\begin{lemma}[Local SQP--SQCQP direction stability]\label{lem:4.2}
Suppose Assumptions~\ref{ass:4.1}, \ref{ass:4.3}, and \ref{ass:4.5} hold. Consider a relevant convergent subsequence lying in the local regular regime of Assumption~\ref{ass:4.5}, and suppose that the SQP and SQCQP subproblems share the locally identified active set \(\mathcal{A}_k\). Then there exist constants
\(C_{\mathrm{dir}}>0\) and \(\delta_d>0\) such that, whenever $\|d_{\mathrm{SQP}}^k\|_2\leq\delta_d,$ the corresponding SQCQP direction satisfies
\[\boxed{\left\|d_{\mathrm{SQCQP}}^k-d_{\mathrm{SQP}}^k\right\|_2\leq C_{\mathrm{dir}}\left\|d_{\mathrm{SQP}}^k\right\|_2.}\]
Consequently,
$d_{\mathrm{SQP}}^k\longrightarrow0 \quad\Longrightarrow\quad d_{\mathrm{SQCQP}}^k\longrightarrow0.$\end{lemma}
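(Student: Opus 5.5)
The plan is a perturbation argument. Evaluate the SQCQP KKT mapping \(\Psi^k\) at the SQP primal--dual pair \(z_{\mathrm{SQP}}^k\), and use strong regularity from Assumption~\ref{ass:4.5} to convert the size of the residual into a bound on \(\|z_{\mathrm{SQCQP}}^k-z_{\mathrm{SQP}}^k\|_2\).

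The first step is to compute the residual. Subtracting the SQP KKT equations \eqref{eq:sqp_kkt_system} from \(\Psi^k\) gives
\[
\Psi^k(d_{\mathrm{SQP}}^k,\nu_{\mathrm{SQP}}^k)=\begin{bmatrix}(G^k-B^k)d_{\mathrm{SQP}}^k+H^k(\nu_{\mathrm{SQP}}^k)d_{\mathrm{SQP}}^k\\ q^k(d_{\mathrm{SQP}}^k)\end{bmatrix}.
\]
Assumption~\ref{ass:4.3} gives \(\|G^k-B^k\|_2\le \overline g+\overline b\). Assumption~\ref{ass:4.1} gives \(\|H^k(\nu)\|_2\le (M_h+M_c)\|\nu\|_1\). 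Moreover \(\|q^k(d)\|_2\le \tfrac12\sqrt{m+p}\,\max\{M_h,M_c\}\|d\|_2^2\).

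For the multiplier bound, note that on the regular subsequence the SQP multipliers are bounded. Linear independence of the active gradients at accumulation points (Assumption~\ref{ass:4.5}) persists nearby, so \(A^k\) has uniformly bounded pseudo-inverse. Then \(\nu_{\mathrm{SQP}}^k=-(A^k(A^k)^\top)^{-1}A^k(g^k+B^kd_{\mathrm{SQP}}^k)\). Here \(g^k\) is bounded because \(\nabla F\) is continuous on the compact set and, along the relevant subsequence, the estimator error is controlled. This yields \(\|\nu_{\mathrm{SQP}}^k\|_2\le C_\nu\). Hence, for \(\|d_{\mathrm{SQP}}^k\|_2\le\delta_d\le 1\),
\[
\|\Psi^k(z_{\mathrm{SQP}}^k)\|_2\le C_{\mathrm{res}}\|d_{\mathrm{SQP}}^k\|_2,
\]
with \(C_{\mathrm{res}}=\overline g+\overline b+(M_h+M_c)\sqrt{m+p}\,C_\nu+\tfrac12\sqrt{m+p}\max\{M_h,M_c\}\).

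The second step applies strong regularity. On the shared active set \(\mathcal A_k\), the SQCQP KKT point satisfies \(\Psi^k(z_{\mathrm{SQCQP}}^k)=0\). By Assumption~\ref{ass:4.5}, the active-set system is strongly regular with uniformly bounded inverse-Lipschitz modulus \(\kappa\). The SQCQP solution inside the regularity neighborhood is therefore the unique zero of \(\Psi^k\) there, and it satisfies
\[
\|z_{\mathrm{SQCQP}}^k-z\|_2\le\kappa\|\Psi^k(z)\|_2
\]
for \(z\) in that neighborhood. Taking \(z=z_{\mathrm{SQP}}^k\) gives the boxed inequality with \(C_{\mathrm{dir}}=\kappa C_{\mathrm{res}}\). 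The consequence \(d_{\mathrm{SQP}}^k\to0\Rightarrow d_{\mathrm{SQCQP}}^k\to0\) then follows from the triangle inequality: \(\|d_{\mathrm{SQCQP}}^k\|_2\le(1+C_{\mathrm{dir}})\|d_{\mathrm{SQP}}^k\|_2\).

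The main obstacle is justifying the second step uniformly. Two points need care. First, \(\Psi^k\) varies with \(k\) and its Jacobian \(\begin{bmatrix}G^k+H^k(\nu) & (A^k)^\top+\ldots\\ A^k+\nabla q^k(d) & 0\end{bmatrix}\) differs from the SQP Jacobian. Second, \(z_{\mathrm{SQP}}^k\) must lie inside the neighborhood on which the inverse-Lipschitz estimate holds. For the first point, I would use Robinson's theorem: the Jacobian at \(z_{\mathrm{SQP}}^k\) has nonsingular reduced Hessian because \(G^k\succeq\underline g I\). The perturbation terms \(\nabla q^k(d)\) are \(O(\|d\|_2)\), and \(H^k\) is absorbed by the uniform modulus postulated in Assumption~\ref{ass:4.5}. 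For the second point, \(\delta_d\) should be chosen small enough that the residual bound \(C_{\mathrm{res}}\delta_d\) places the perturbed zero within the regularity radius. This choice of \(\delta_d\) also keeps the selected SQCQP KKT pair in the same local branch.
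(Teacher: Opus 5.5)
Your proposal follows essentially the same route as the paper: evaluate the SQCQP KKT map $\Psi^k$ at the SQP pair $z_{\mathrm{SQP}}^k$, bound that residual linearly in $\|d_{\mathrm{SQP}}^k\|_2$ using the spectral bounds, bounded multipliers and bounded constraint Hessians, then invoke the uniform inverse-Lipschitz modulus of Assumption~\ref{ass:4.5}, and finish with the triangle inequality. The differences are minor: you bound $\nu_{\mathrm{SQP}}^k$ via LICQ and the least-squares multiplier formula instead of the paper's forward appeal to Lemma~\ref{lem:4.6}, and you discuss uniformity and branch selection more carefully than the paper; note, though, that $G^k\succeq\underline g I$ alone does not make the Jacobian nonsingular, since its $(1,1)$ block is $G^k+H^k(\nu)$, so that step rests on Assumption~\ref{ass:4.5} exactly as in the paper.
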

\begin{proof}
The SQCQP KKT residual is first evaluated at the SQP KKT pair.Using the SQP stationarity and active-constraint equations, we get
\[\Psi^k\left(d_{\mathrm{SQP}}^k,\nu_{\mathrm{SQP}}^k\right)=\begin{bmatrix}
(G^k-B^k)d_{\mathrm{SQP}}^k+H^k(\nu_{\mathrm{SQP}}^k)d_{\mathrm{SQP}}^k\\[1mm]
q^k(d_{\mathrm{SQP}}^k)\end{bmatrix}.\]
By Assumption~\ref{ass:4.3}, $\|G^k-B^k\|_2\leq\overline g+\overline b.$
Along the convergent subsequence, the outer multipliers are bounded, and the multiplier consistency in Assumption~\ref{ass:4.5} therefore implies boundedness of the corresponding local multipliers which is proved later in \ref{lem:4.6}. Together with the bounded constraint Hessians in Assumption~\ref{ass:4.1}, there exists \(C_H>0\) such that \[\left\|H^k(\nu_{\mathrm{SQP}}^k)\right\|_2\leq C_H.\]
Similarly, bounded constraint curvature gives a constant \(C_q>0\) such that
\[\left\|q^k(d_{\mathrm{SQP}}^k)\right\|_2\leq C_q\left\|d_{\mathrm{SQP}}^k \right\|_2^2.\]
Hence, for suitable constants \(C_1,C_2>0\),
\[\left\|\Psi^k\left(d_{\mathrm{SQP}}^k,\nu_{\mathrm{SQP}}^k\right)\right\|_2\leq
C_1\left\|d_{\mathrm{SQP}}^k\right\|_2+C_2\left\|d_{\mathrm{SQP}}^k\right\|_2^2.\]
For sufficiently small \(\|d_{\mathrm{SQP}}^k\|_2\), the quadratic term is dominated by the linear term, so that
\[\left\|\Psi^k\left(d_{\mathrm{SQP}}^k,\nu_{\mathrm{SQP}}^k\right)\right\|_2\leq
C\left\|d_{\mathrm{SQP}}^k\right\|_2\] for some \(C>0\).

By the strong regularity stated in Assumption~\ref{ass:4.5}, the locally identified SQCQP KKT system possesses a locally Lipschitz inverse with uniformly bounded modulus. Therefore, for some \(C_{\mathrm{reg}}>0\),
\[\left\|z_{\mathrm{SQCQP}}^k-z_{\mathrm{SQP}}^k\right\|_2\leq C_{\mathrm{reg}}
\left\|\Psi^k\left(d_{\mathrm{SQP}}^k,\nu_{\mathrm{SQP}}^k\right)\right\|_2.\]
Combining the preceding bounds yields
\[\left\|z_{\mathrm{SQCQP}}^k-z_{\mathrm{SQP}}^k\right\|_2\leq C_{\mathrm{dir}}
\left\|d_{\mathrm{SQP}}^k\right\|_2\]
for some \(C_{\mathrm{dir}}>0\). Since the primal direction is a component of the KKT vector \(z\),
\[\left\|d_{\mathrm{SQCQP}}^k-d_{\mathrm{SQP}}^k\right\|_2\leq C_{\mathrm{dir}}
\left\|d_{\mathrm{SQP}}^k\right\|_2.\]
Finally, $\|d_{\mathrm{SQCQP}}^k\|_2\leq\left(1+C_{\mathrm{dir}}\right)
\|d_{\mathrm{SQP}}^k\|_2,$ which proves that \(d_{\mathrm{SQP}}^k\to0\) implies \(d_{\mathrm{SQCQP}}^k\to0\).\end{proof}

\subsection{Merit-Function Descent and Armijo Globalization}
\label{subsec:merit_globalization}

The effective direction \(d^k\), obtained after the SQCQP-to-SQP safeguard when necessary, is globalized using the exact nonlinear merit function and stochastic directional quantity defined in Eqs. \eqref{eq:method_merit} and \eqref{eq:method_merit_direction}
\begin{lemma}[Stochastic merit-function remainder]\label{lem:4.3}
Suppose Assumption~\ref{ass:4.1} holds. For every finite working penalty parameter \(\rho_k>0\), there exist \(\bar{\alpha}_k>0\) and \(C_\Phi^k>0\) such that, for every \(0<\alpha\leq\bar{\alpha}_k\),
\begin{equation}\label{eq:stochastic_merit_remainder}
\Phi(x^k+\alpha d^k;\rho_k)\leq\Phi(x^k;\rho_k)+\alpha D_k^\Phi+ \alpha\|e^k\|_2\|d^k\|_2+C_\Phi^k\alpha^2\|d^k\|_2^2,\end{equation} where $e^k=g^k-\nabla F(x^k),$ and one may take $C_\Phi^k=\frac{L_F}{2}+\frac{\rho_k}{2}\left(mM_h+pM_c\right).$\end{lemma}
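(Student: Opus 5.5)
The plan is to split the merit function as $\Phi(x;\rho_k)=F(x)+\rho_k V(x)$ and bound the two pieces separately along the ray $x^k+\alpha d^k$. Then reinsert the stochastic estimator $g^k$ through $\nabla F(x^k)=g^k-e^k$. Throughout, $\bar\alpha_k$ is chosen so that the segment $[x^k,x^k+\bar\alpha_k d^k]$ lies in $\mathcal{N}$. This is possible because $\mathcal{N}$ is a neighborhood containing the iterates; if $d^k=0$ the claim is trivial.

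\emph{Objective term.} Averaging the Lipschitz bound \eqref{eq:gradient_lipschitz} over $r$ shows that $\nabla F$ is $L_F$-Lipschitz on $\mathcal{N}$. The descent lemma then gives
\[F(x^k+\alpha d^k)\le F(x^k)+\alpha\nabla F(x^k)^\top d^k+\tfrac{L_F}{2}\alpha^2\|d^k\|_2^2 .\]
Writing $\nabla F(x^k)^\top d^k=(g^k)^\top d^k-(e^k)^\top d^k$ and applying Cauchy--Schwarz yields
\[\nabla F(x^k)^\top d^k\le (g^k)^\top d^k+\|e^k\|_2\|d^k\|_2 .\]
This produces the error term $\alpha\|e^k\|_2\|d^k\|_2$.

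\emph{Constraint-violation term.} For each scalar constraint $r\in\{h_i,c_j\}$, Taylor's theorem with the Hessian bounds of Assumption~\ref{ass:4.1} gives
\[r(x^k+\alpha d^k)=r(x^k)+\alpha\nabla r(x^k)^\top d^k+E_r,\qquad |E_r|\le \tfrac{M_r}{2}\alpha^2\|d^k\|_2^2 ,\]
where $M_r\in\{M_h,M_c\}$. The maps $t\mapsto|t|$ and $t\mapsto t^+$ are $1$-Lipschitz, so
\[|h_i(x^k+\alpha d^k)|\le |h_i(x^k)+\alpha\nabla h_i(x^k)^\top d^k|+\tfrac{M_h}{2}\alpha^2\|d^k\|_2^2 ,\]
and the analogous bound holds for $c_j^+$.

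Next I need the linearized piece to be controlled by the directional derivative. For the convex piecewise-linear function $\psi(t)=|a+tb|$ or $(a+tb)^+$, the one-sided derivative at $0$ is $\psi'(0^+)$. If $a\neq0$, then $\psi$ is linear on $[0,\bar t]$ for some $\bar t>0$, namely $\bar t=|a|/|b|$ when $b\neq 0$ and $\bar t=\infty$ otherwise. If $a=0$, then $\psi$ is positively homogeneous. In both cases $\psi(t)=\psi(0)+t\psi'(0^+)$ for $t\in[0,\bar t]$.

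Summing over the finitely many constraints and shrinking $\bar\alpha_k$ to the minimum of these thresholds gives
\[V(x^k+\alpha d^k)\le V(x^k)+\alpha DV(x^k)[d^k]+\tfrac{1}{2}(mM_h+pM_c)\alpha^2\|d^k\|_2^2 .\]
Here $DV(x^k)[d^k]$ is understood as the one-sided directional derivative of $V$ at $x^k$. (If the paper intends this quantity to be the directional derivative of the linearized $V$, this is exactly it.)

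Multiplying the $V$ bound by $\rho_k$, adding the $F$ bound, and using $D_k^\Phi=(g^k)^\top d^k+\rho_k DV(x^k)[d^k]$ yields \eqref{eq:stochastic_merit_remainder} with
\[C_\Phi^k=\tfrac{L_F}{2}+\tfrac{\rho_k}{2}(mM_h+pM_c).\]

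The main obstacle is the nonsmooth step. Convexity alone gives only $\psi(t)\ge\psi(0)+t\psi'(0^+)$, which is the wrong direction. The upper bound therefore needs the exact local linearity of the piecewise-linear pieces, and that is what forces the $k$-dependent restriction $\alpha\le\bar\alpha_k$. This also explains why $\bar\alpha_k$ cannot be taken uniform: constraints with $h_i(x^k)$ or $c_j(x^k)$ near zero shrink the linearity interval.
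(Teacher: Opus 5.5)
Your proof is correct and follows the same route as the paper. For the objective, both use the descent lemma, the substitution $\nabla F(x^k)=g^k-e^k$ and Cauchy--Schwarz; for the constraints, both apply Taylor's theorem with the Hessian bounds $M_h,M_c$ to each constraint, then sum and multiply by $\rho_k$. The one difference is in your favor: the paper asserts $|h_i(x^k+\alpha d^k)|\le|h_i(x^k)|+\alpha D|h_i(x^k)|[d^k]+\frac{M_h}{2}\alpha^2\|d^k\|_2^2$ (and its $c_j^+$ analogue) directly from Taylor's theorem and never exhibits $\bar\alpha_k$. Your local-linearity argument for $t\mapsto|a+tb|$ and $t\mapsto(a+tb)^+$, together with keeping the segment inside $\mathcal{N}$, is exactly what makes that step valid and shows where $\bar\alpha_k$ comes from. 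The identification of $\psi'(0^+)$ with the true one-sided directional derivative of $V$ holds because $h_i(x^k+td^k)=a+tb+o(t)$ and the maps $|\cdot|$, $(\cdot)^+$ are $1$-Lipschitz.
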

\begin{proof}
By the Lipschitz continuity of \(\nabla F\) in Assumption~\ref{ass:4.1}, the descent lemma gives\[F(x^k+\alpha d^k)\leq F(x^k) + \alpha\nabla F(x^k)^\top d^k + \frac{L_F}{2}\alpha^2\|d^k\|_2^2.\]
Since $\nabla F(x^k)=g^k-e^k,$ Cauchy--Schwarz yields
\[F(x^k+\alpha d^k)\leq F(x^k)+\alpha(g^k)^\top d^k+\alpha\|e^k\|_2\|d^k\|_2
+\frac{L_F}{2}\alpha^2\|d^k\|_2^2.\]
It remains to control the constraint-violation term. Taylor's theorem and the Hessian bounds in Assumption~\ref{ass:4.1} imply, for each equality constraint,
\[|h_i(x^k+\alpha d^k)|\leq|h_i(x^k)|+\alpha D|h_i(x^k)|[d^k]+\frac{M_h}{2} \alpha^2\|d^k\|_2^2.\]
Likewise, using the one-sided directional derivative of the positive-part mapping,
\[c_j(x^k+\alpha d^k)^+\leq c_j(x^k)^+ +\alpha Dc_j(x^k)^+[d^k]+\frac{M_c}{2}
\alpha^2\|d^k\|_2^2.\]
Summation over all constraints gives
\[V(x^k+\alpha d^k)\leq V(x^k)+\alpha DV(x^k)[d^k]+\frac{mM_h+pM_c}{2} \alpha^2\|d^k\|_2^2.\]
Multiplying this inequality by \(\rho_k\), adding the objective bound, and using the definition of \(D_k^\Phi\) gives\eqref{eq:stochastic_merit_remainder}. \end{proof}

\begin{lemma}[Finite construction of an effective merit-descent direction] \label{lem:4.4}
Suppose Assumption~\ref{ass:4.3} holds and that the local SQP and SQCQP subproblems admit the relevant KKT solutions. At any nonstationary iteration for which the effective direction is nonzero, the AVR-SS(QC)QP globalization procedure obtains, after finitely many within-iteration penalty adjustments, $D_k^\Phi<0.$
\end{lemma}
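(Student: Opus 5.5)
The plan is to follow the three branches of the globalization procedure and show that each ends, after finitely many penalty adjustments, with \(D_k^\Phi<0\). The key analytic ingredient is a bound on the SQP directional quantity. It is derived from the KKT conditions of \eqref{eq:sqp_subproblem} together with the uniform positive definiteness of \(B^k\) in Assumption~\ref{ass:4.3}.

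\emph{Step 1 (SQP probe).} Let \((d,\widehat\lambda,\widehat\mu)=(d_{\mathrm{SQP}}^k,\widehat{\lambda}_{\mathrm{SQP}}^k,\widehat{\mu}_{\mathrm{SQP}}^k)\). Stationarity gives
\[
(g^k)^\top d=-d^\top B^kd-\widehat\lambda^\top J_h(x^k)d-\widehat\mu^\top J_c(x^k)d .
\]
Linearized feasibility gives \(J_h(x^k)d=-h(x^k)\), so the middle term equals \(\widehat\lambda^\top h(x^k)\le\|\widehat\lambda\|_\infty\|h(x^k)\|_1\). Complementarity gives \(\widehat\mu_j\nabla c_j(x^k)^\top d=-\widehat\mu_jc_j(x^k)\), so \(-\widehat\mu^\top J_c d\le\|\widehat\mu\|_\infty\|c(x^k)^+\|_1\).

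Next, the one-sided directional derivative of \(V\) along a direction satisfying the linearized constraints obeys \(DV(x^k)[d]\le -V(x^k)\). For equalities, \(D|h_i|[d]=\operatorname{sgn}(h_i)\nabla h_i^\top d=-|h_i|\) when \(h_i\neq0\), and it is \(|\nabla h_i^\top d|=0\) when \(h_i=0\). For inequalities, the positive part is handled similarly using \(c_j+\nabla c_j^\top d\le0\).

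Combining these facts yields
\[
D_k^\Phi\le -\underline b\|d\|_2^2-(\rho_k-\max\{\|\widehat\lambda\|_\infty,\|\widehat\mu\|_\infty\})V(x^k).
\]
Since \(d\neq0\), we get \(D_k^\Phi<0\) as soon as \(\rho_k\ge\max\{\|\widehat\lambda\|_\infty,\|\widehat\mu\|_\infty\}\). We also get \(DV(x^k)[d]\le -V(x^k)\).

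\emph{Step 2 (termination of the penalty loop).} If \(DV(x^k)[d^k]<0\), then \(D_k^\Phi=(g^k)^\top d^k+\rho_kDV(x^k)[d^k]\) is affine and strictly decreasing in \(\rho_k\). After \(\ell\) multiplications by \(\gamma_\rho>1\), the penalty equals \(\gamma_\rho^\ell\rho_k\to\infty\). Hence \(D_k^\Phi<0\) once
\[
\ell>\log_{\gamma_\rho}\!\bigl((g^k)^\top d^k/(\rho_k|DV(x^k)[d^k]|)\bigr),
\]
which is a finite number. This covers the SQCQP case with \(D_k^\Phi\ge0\) and \(DV(x^k)[d_{\mathrm{SQCQP}}^k]<0\). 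It also covers the SQP case when \(V(x^k)>0\), because then \(DV\le-V<0\) by Step 1. The SQCQP case with \(D_k^\Phi<0\) already holds with zero adjustments.

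\emph{Step 3 (fallback and the infeasibility-free case).} Suppose SQCQP is selected but \(DV(x^k)[d_{\mathrm{SQCQP}}^k]\ge0\) with \(D_k^\Phi\ge0\). The algorithm then switches to the SQP probe, so it suffices to treat the SQP direction. If \(V(x^k)>0\), Step 2 applies.

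If \(V(x^k)=0\), Step 1 gives \(D_k^\Phi\le-\underline b\|d_{\mathrm{SQP}}^k\|_2^2<0\) directly, with no penalty increase at all. This uses \(V(x^k)=0\), which gives \(h(x^k)=0\) and \(c(x^k)^+=0\), so the multiplier terms are nonpositive. For the equality term I will use \(\widehat\lambda^\top h(x^k)=0\).

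Nonzero-ness of the SQP probe must also be checked. If the probe were zero at a nonstationary point with \(V=0\), the KKT conditions of \eqref{eq:sqp_subproblem} would make \(x^k\) stochastically stationary. That case is excluded by the hypothesis or by the zero-direction KKT check in Algorithm~\ref{alg:avr_sqcp}.

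\emph{Main obstacle.} The delicate step is the directional-derivative inequality \(DV(x^k)[d]\le-V(x^k)\) for the nonsmooth \(\ell_1\) term. It requires careful treatment of components with \(h_i(x^k)=0\) or \(c_j(x^k)=0\), where the one-sided derivative is \(|\nabla h_i^\top d|\) or \((\nabla c_j^\top d)^+\); these vanish by linearized feasibility. A second concern is ensuring that the ``globalization failure'' branch of Algorithm~\ref{alg:avr_sqcp} is never reached under the lemma's hypotheses. I would argue this by showing that after the fallback the direction is always an SQP probe, for which Step 1 guarantees that either \(DV<0\) or \(V=0\) with \(D_k^\Phi<0\).
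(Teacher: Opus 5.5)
Your proposal is correct and takes essentially the same route as the paper. The SQCQP candidate splits into three cases: immediate descent, a geometric penalty increase when $DV(x^k)[d_{\mathrm{SQCQP}}^k]<0$, or fallback to the SQP probe. The SQP direction is then handled via $DV(x^k)[d_{\mathrm{SQP}}^k]\le -V(x^k)$ when $V(x^k)>0$, and via the inner product of SQP stationarity with $d_{\mathrm{SQP}}^k$ together with $B^k\succeq\underline b I$ when $V(x^k)=0$.

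Your componentwise derivation of $DV(x^k)[d]\le -V(x^k)$ and the explicit threshold $\rho_k\ge\max\{\|\widehat\lambda\|_\infty,\|\widehat\mu\|_\infty\}$ fill in steps the paper only asserts. One correction: a zero SQP probe gives only stationarity with respect to $g^k$, not exact stationarity. So it is excluded by the hypothesis that the effective direction is nonzero, not by nonstationarity or by the KKT check.
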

\begin{proof}
Consider first an RCMCI-selected SQCQP candidate. If its stochastic merit directional quantity is already negative, no modification is required.
Suppose instead that $D_k^\Phi\geq0.$ If $DV(x^k)[d_{\mathrm{SQCQP}}^k]\geq0,$ increasing the penalty parameter cannot create descent. By the globalization safeguard, the SQCQP candidate is therefore replaced by the already-computed SQP probe and its associated local multipliers. If, on the other hand, $DV(x^k)[d_{\mathrm{SQCQP}}^k]<0,$ then, with the direction held fixed, $D_k^\Phi(\rho)=(g^k)^\top d_{\mathrm{SQCQP}}^k+\rho\,DV(x^k)[d_{\mathrm{SQCQP}}^k]$ is strictly decreasing in \(\rho\). Since                               
\(\rho\leftarrow\gamma_\rho\rho\) with \(\gamma_\rho>1\), a finite number of penalty increases produces \(D_k^\Phi<0\).

It remains to consider an effective SQP direction. Feasibility of the linearized SQP constraints implies $DV(x^k)[d_{\mathrm{SQP}}^k]\leq -V(x^k).$ Hence, if \(V(x^k)>0\), and $DV(x^k)[d_{\mathrm{SQP}}^k]<0,$ and the same geometric penalty increase produces \(D_k^\Phi<0\) after finitely many updates whenever it is not already negative.

If \(V(x^k)=0\), the current point is feasible. Let 
\((\widehat{\lambda}_{\mathrm{SQP}}^k, \widehat{\mu}_{\mathrm{SQP}}^k)\)
be the SQP subproblem multipliers. Taking the inner product of the SQP stationarity condition with \(d_{\mathrm{SQP}}^k\), and using the linearized constraint equations and complementarity, gives
\[(g^k)^\top d_{\mathrm{SQP}}^k\leq-(d_{\mathrm{SQP}}^k)^\top
B^kd_{\mathrm{SQP}}^k.\] Assumption~\ref{ass:4.3} therefore yields
\[(g^k)^\top d_{\mathrm{SQP}}^k\leq-\underline b \|d_{\mathrm{SQP}}^k\|_2^2<0\]
whenever \(d_{\mathrm{SQP}}^k\neq0\). Since \(DV(x^k)[d_{\mathrm{SQP}}^k]\leq0\), it follows that \(D_k^\Phi<0\).

Thus, every nonzero effective direction either already satisfies the stochastic merit-descent condition or attains it after finitely many penalty adjustments. A numerically zero direction is treated separately by the exact KKT safeguard described in Section~\ref{subsec:method_globalization}.
\end{proof}
\begin{lemma}[Armijo backtracking termination]\label{lem:4.5}
Suppose the conditions of Lemma~\ref{lem:4.3} hold and \(D_k^\Phi<0\). Assume further that, for some constant $0\leq\vartheta<1-\sigma,$ the estimator error satisfies \[\|e^k\|_2\|d^k\|_2\leq\vartheta(-D_k^\Phi).\]
Then the Armijo backtracking procedure terminates after finitely many reductions.
\end{lemma}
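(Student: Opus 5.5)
The plan is to plug the error hypothesis into the stochastic remainder bound of Lemma~\ref{lem:4.3} and show that the Armijo inequality \eqref{eq:method_armijo} holds for every $\alpha$ below an explicit positive threshold. Backtracking generates $\alpha=\beta^\ell$ with $0<\beta<1$, so it must hit that threshold after finitely many reductions.

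First, fix the iteration $k$ and the working penalty $\rho_k$ as it stands after the within-iteration adjustments of Lemma~\ref{lem:4.4}. This $\rho_k$ is finite and no longer changes during the line search. Lemma~\ref{lem:4.3} then supplies $\bar\alpha_k>0$ and $C_\Phi^k$ with
\[
\Phi(x^k+\alpha d^k;\rho_k)\leq\Phi(x^k;\rho_k)+\alpha D_k^\Phi+\alpha\|e^k\|_2\|d^k\|_2+C_\Phi^k\alpha^2\|d^k\|_2^2
\]
for all $0<\alpha\leq\bar\alpha_k$. Using the assumption $\|e^k\|_2\|d^k\|_2\leq\vartheta(-D_k^\Phi)$, the right-hand side is at most
\[
\Phi(x^k;\rho_k)+(1-\vartheta)\alpha D_k^\Phi+C_\Phi^k\alpha^2\|d^k\|_2^2 .
\]

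Second, compare this with the Armijo target $\Phi(x^k;\rho_k)+\sigma\alpha D_k^\Phi$. Sufficient decrease follows once
\[
C_\Phi^k\alpha\|d^k\|_2^2\leq(1-\vartheta-\sigma)(-D_k^\Phi).
\]
This holds whenever
\[
\alpha\leq\alpha_k^\ast:=\min\Bigl\{\bar\alpha_k,\ \frac{(1-\vartheta-\sigma)(-D_k^\Phi)}{C_\Phi^k\|d^k\|_2^2}\Bigr\}.
\]
The threshold is strictly positive because $1-\vartheta-\sigma>0$, $D_k^\Phi<0$, and $d^k\neq0$. The case $d^k\neq0$ is guaranteed since $D_k^\Phi<0$ forces $d^k\neq0$.

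Third, the first trial $\alpha=\beta^\ell$ with $\beta^\ell\leq\alpha_k^\ast$ is accepted, if not earlier. So the number of reductions is at most $\lceil\log(\alpha_k^\ast)/\log\beta\rceil^+$. Moreover, the accepted step satisfies $\alpha_k\geq\min\{1,\beta\alpha_k^\ast\}$, a lower bound I would record for the later Lyapunov analysis.

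The only delicate point is the range of validity $\bar\alpha_k$ in Lemma~\ref{lem:4.3}. It comes from the Taylor expansion of the nonsmooth $\ell_1$ terms, which requires $x^k+\alpha d^k$ to stay in $\mathcal N$. It also requires $\alpha$ small enough that no constraint switches sign in a way that breaks the one-sided directional-derivative bound. I would simply take the $\bar\alpha_k$ provided by that lemma as given, and note that it is independent of $\vartheta$ and $\sigma$. Then the minimum above is well defined and positive, and the argument closes.
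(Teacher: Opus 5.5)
Your proposal is correct and follows the paper's proof essentially step for step: you substitute the error bound into Lemma~\ref{lem:4.3}, reduce the Armijo test to $C_\Phi^k\alpha\|d^k\|_2^2\leq(1-\sigma-\vartheta)(-D_k^\Phi)$, and conclude because $\beta^\ell\to0$. Your explicit threshold $\alpha_k^\ast$ also builds in the validity range $\bar\alpha_k$ and observes that $D_k^\Phi<0$ forces $d^k\neq0$, so it is a slightly more careful version of the paper's ``every sufficiently small positive step'' argument.
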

\begin{proof}
From Lemma~\ref{lem:4.3},\[\Phi(x^k+\alpha d^k;\rho_k)\leq\Phi(x^k;\rho_k)+\alpha D_k^\Phi+\alpha\|e^k\|_2\|d^k\|_2+C_\Phi^k\alpha^2\|d^k\|_2^2.\]
Using the assumed estimator-accuracy bound gives
\[\Phi(x^k+\alpha d^k;\rho_k)\leq\Phi(x^k;\rho_k)+\alpha(1-\vartheta)D_k^\Phi+
C_\Phi^k\alpha^2\|d^k\|_2^2.\]
Because \(D_k^\Phi<0\) and \(1-\sigma-\vartheta>0\), the Armijo condition
$\Phi(x^k+\alpha d^k;\rho_k)\leq\Phi(x^k;\rho_k)+\sigma\alpha D_k^\Phi$ is guaranteed whenever $C_\Phi^k\alpha\|d^k\|_2^2\leq(1-\sigma-\vartheta)(-D_k^\Phi).$
Thus, for \(d^k\neq0\), every sufficiently small positive step length satisfies the Armijo condition. Since the backtracking sequence 
\(\{1,\beta,\beta^2,\ldots\}\) tends to zero for \(0<\beta<1\), a finite accepted step \(\alpha_k>0\) exists.
\end{proof}
Consequently, whenever the estimator-accuracy condition of Lemma~\ref{lem:4.5} holds, the accepted step satisfies \[\Phi(x^{k+1};\rho_k)\leq\Phi(x^k;\rho_k)+
\sigma\alpha_kD_k^\Phi<\Phi(x^k;\rho_k).\]
The role of the stochastic estimator error in ensuring the asymptotic
validity of this condition is addressed in the subsequent coupled
Lyapunov analysis.
\begin{lemma}[Boundedness of multiplier estimates]\label{lem:4.6}
Suppose Assumptions~\ref{ass:4.1}--\ref{ass:4.5} hold. Then, on the regular tail associated with the accumulation set of \(\{x^k\}\), the local multiplier pairs generated by the effective SQP or SQCQP subproblems are uniformly bounded. Consequently, under the multiplier handoff
\[\lambda^{k+1}=\widehat{\lambda}^k,\qquad\mu^{k+1}=\widehat{\mu}^k,\]
there exist constants \(M_\lambda,M_\mu>0\) and a finite index \(K_\mu\) such that
\begin{equation}\label{eq:multiplier_boundedness}
\boxed{\|\lambda^k\|_\infty\leq M_\lambda,\qquad\|\mu^k\|_\infty\leq M_\mu,\qquad
k\geq K_\mu.}\end{equation}\end{lemma}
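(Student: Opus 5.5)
The plan is a standard compactness-plus-LICQ argument, applied to the KKT systems of the SQP and SQCQP subproblems on the identified active set. The core idea: LICQ makes the active constraint Jacobian uniformly well conditioned, so the stationarity equation can be solved for the multipliers, and every other term in that equation is already bounded.

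\textbf{Step 1 (uniform conditioning of the Jacobian).} By Assumption~\ref{ass:4.4} the iterates lie in the compact set $\mathcal{X}$. By Assumption~\ref{ass:4.5}, at every accumulation point $x^\star$ the active Jacobian $A^\star$ has full row rank. Continuity of $J_h$ and $J_c$ (Assumption~\ref{ass:4.1}) then gives a neighborhood $U_{x^\star}$ and a constant $\sigma_{x^\star}>0$ with $\sigma_{\min}(A(x))\ge \sigma_{x^\star}$ on $U_{x^\star}$ for every active subset contained in the active set at $x^\star$. The accumulation set is compact, so finitely many such neighborhoods cover it. This yields a uniform constant $\underline\sigma>0$ and an open set $U$ containing the accumulation set.

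\textbf{Step 2 (tail index).} By compactness, $x^k\in U$ for all $k\ge K_\mu$. Otherwise there would be a subsequence outside $U$, and it would have an accumulation point outside $U$, contradicting the fact that $U$ contains the accumulation set. On this regular tail, strong regularity also ensures that the identified active set $\mathcal{A}_k$ is contained in the active set at nearby accumulation points. Hence $\sigma_{\min}(A^k)\ge\underline\sigma$.

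\textbf{Step 3 (multiplier bound from stationarity).} For the SQP case, \eqref{eq:sqp_kkt_system} gives
\[
\widehat\nu^k=-\bigl(A^k(A^k)^\top\bigr)^{-1}A^k\bigl(g^k+B^k d^k_{\mathrm{SQP}}\bigr),
\]
so that $\|\widehat\nu^k\|_2\le \underline\sigma^{-1}\bigl(\|g^k\|_2+\overline b\|d^k_{\mathrm{SQP}}\|_2\bigr)$. Here:
\begin{itemize}
\item $\|g^k\|_2\le\|\nabla F(x^k)\|_2+\|e^k\|_2$;
\item $\nabla F$ is bounded on $\mathcal{X}$ by continuity;
\item $e^k\to0$ almost surely by Theorem~\ref{thm:spider_consistency}, hence it is eventually bounded;
\item $\|d^k_{\mathrm{SQP}}\|_2$ is bounded because, with $B^k\succeq\underline bI$, the SQP objective is strongly convex. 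Comparing with a feasible point of the linearized constraints whose norm is bounded by $\underline\sigma^{-1}\|r^k\|_2$ (the minimum-norm solution) gives a bound depending only on $\sup\|g^k\|$, $\sup\|r^k\|$, $\underline b$, $\overline b$ and $\underline\sigma$.
\end{itemize}
Multipliers of inactive inequalities vanish by complementarity.

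For the SQCQP case, the stationarity equation is $g^k+G^kd+(A^k)^\top\nu+H^k(\nu)d=0$. Because $H^k(\nu)$ is linear in $\nu$, the effective Jacobian is $A^k+[\nabla^2(\cdot)d]$, which is a perturbation of $A^k$ of size at most $(mM_h+pM_c)\|d\|_2$. By Lemma~\ref{lem:4.2}, $\|d^k_{\mathrm{SQCQP}}\|_2\le(1+C_{\mathrm{dir}})\|d^k_{\mathrm{SQP}}\|_2$ in the local regime. When this is small, the perturbed Jacobian still has smallest singular value at least $\underline\sigma/2$, and the same formula bounds $\nu$. Outside the small-direction regime I would instead invoke the uniformly bounded inverse-Lipschitz modulus of strong regularity (Assumption~\ref{ass:4.5}) to bound the whole KKT vector $z^k_{\mathrm{SQCQP}}$ directly by the data.

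\textbf{Step 4 (handoff).} The handoff $\lambda^{k+1}=\widehat\lambda^k$ and $\mu^{k+1}=\widehat\mu^k$ transfers these bounds, giving \eqref{eq:multiplier_boundedness} with $M_\lambda,M_\mu$ equal to the bound on $\|\widehat\nu\|_\infty\le\|\widehat\nu\|_2$. This establishes the claim.

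The main obstacle is the SQCQP case. There the multiplier enters the stationarity equation through $H^k(\nu)d$, and bounding $d$ must not rely on Lemma~\ref{lem:4.2}, since that lemma's proof already invokes the present result; the argument would otherwise be circular. I would avoid this by first bounding $d^k_{\mathrm{SQCQP}}$ from the strong convexity of its objective ($G^k\succeq\underline gI$) together with the feasibility of a bounded point near the SQP probe. Only then would I apply the perturbed-LICQ argument, or fall back on strong regularity.
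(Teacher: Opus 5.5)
\textbf{There is a genuine gap, and it sits in Step 2, which your whole route depends on.}

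The multiplier formula in Step 3 needs $\sigma_{\min}(A^k)\ge\underline\sigma$ for the active set $\mathcal A_k$ of the \emph{subproblem}. LICQ at $x^\star$ only controls the gradients of constraints active at $x^\star$. Nothing in Assumption~\ref{ass:4.5} forces $\mathcal A_k\subseteq\mathcal A(x^\star)$. A constraint with $c_j(x^\star)<0$ can be active in the SQP subproblem at some $x^k$ near $x^\star$ as soon as $d^k_{\mathrm{SQP}}$ is not small. Active-set containment requires $d^k\to0$. The paper obtains that only in Lemma~\ref{lem:4.10}, which is downstream of this lemma via Theorem~\ref{thm:4.1}, Lemma~\ref{lem:4.7} and Theorem~\ref{thm:4.2}. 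Without containment, the rows of $A^k$ coming from such constraints are uncontrolled, and the pseudo-inverse formula gives no bound.

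Your primary SQCQP argument needs the same smallness, both for the perturbed Jacobian and for a feasible point of the nonconvex quadratic constraints near the probe. So in that case you end up falling back on strong regularity anyway.

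There is a second gap. You bound $\|g^k\|_2$ through Theorem~\ref{thm:spider_consistency}. Its hypothesis $\mathbb E\sum_k\|x^k-x^{k-1}\|_2^2<\infty$ is not among Assumptions~\ref{ass:4.1}--\ref{ass:4.5}. In the paper, increment summability appears only in Corollary~\ref{cor:4.2}, again downstream. You caught the circularity through Lemma~\ref{lem:4.2}, but not this one.

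\textbf{How the paper avoids both issues.} First, it bounds $g^k$ deterministically. On the compact $\mathcal X$ one has $\|\nabla F_r\|_2\le M_F$. Moreover, $g^k$ equals $\nabla F$ at the last refresh plus at most $q-1$ SPIDER increments, each of norm at most $2M_F$. Hence $\|g^k\|_2\le(2q-1)M_F$ for every $k$.

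Second, all other data ($h$, $c$, their Jacobians and Hessians, $B^k$, $G^k$) are bounded on $\mathcal X$. The paper then applies the uniformly bounded inverse-Lipschitz modulus of Assumption~\ref{ass:4.5} directly to the full KKT vector $z^k=(d^k,\widehat\nu^k)$ of whichever subproblem is effective, SQP or SQCQP alike, near each accumulation point. It finishes with a finite subcover and the same compactness argument you give for the tail index.

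So your SQCQP fallback, applied to both models and combined with the deterministic $g^k$ bound, is the paper's proof, and your Steps 1--3 become unnecessary. If you want to keep an explicit LICQ-type computation for SQP, the uniform conditioning must come from the subproblem's own strong-regularity modulus, not from LICQ at $x^\star$. Concretely, the active-set KKT matrix maps $(0,\nu)$ to $((A^k)^\top\nu,0)$, so a bound $L$ on its inverse gives $\|\nu\|_2\le L\|(A^k)^\top\nu\|_2$.
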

\begin{proof}
Let \(\Omega\) denote the set of accumulation points of\(\{x^k\}\). By Assumption~\ref{ass:4.4}, \(\{x^k\}\subset\mathcal X\) for a compact set \(\mathcal X\);hence \(\Omega\) is compact.

Since each \(F_r\) is continuously differentiable on \(\mathcal X\),there exists \(M_F>0\) such that $\|\nabla F_r(x)\|_2\leq M_F,\qquad x\in\mathcal X,\quad r=1,\ldots,N.$ At refresh iterations \(g^k=\nabla F(x^k)\), while between two
successive refreshes the SPIDER recursion contains at most \(q-1\) increments, each satisfying\[\left\|\nabla F_{\mathcal B_k}(x^k)-\nabla F_{\mathcal B_k}(x^{k-1})\right\|_2\leq2M_F.\]
Therefore, there exists \(M_g<\infty\) such that $\|g^k\|_2\leq M_g$ for all \(k\).

By Assumption~\ref{ass:4.1} and compactness of \(\mathcal X\), the constraint values, Jacobians, and Hessians are uniformly bounded, while Assumption~\ref{ass:4.3} gives uniform bounds on \(B^k\) and \(G^k\). Hence all data entering the local SQP and SQCQP KKT systems remain bounded.

For every \(x^\star\in\Omega\), Assumption~\ref{ass:4.5} provides a neighborhood \(\mathcal U(x^\star)\) in which the relevant local KKT system is strongly regular with bounded inverse-Lipschitz modulus. Consequently, the corresponding primal--dual KKT pair$z^k=\begin{bmatrix}d^k\\\widehat{\nu}^k\end{bmatrix}$ is bounded whenever \(x^k\in\mathcal U(x^\star)\).

Since \(\Omega\) is compact, finitely many such neighborhoods cover \(\Omega\). The sequence \(\{x^k\}\) eventually lies in their union; otherwise, a subsequence outside the union would possess, by compactness of \(\mathcal X\), an accumulation point in \(\Omega\), yielding a contradiction. Taking the maximum of the finitely many local bounds therefore gives \(M_z<\infty\) and \(K_0\) such that
\[\left\|\begin{bmatrix}d^k\\\widehat{\nu}^k\end{bmatrix}\right\|_2\leq M_z,\qquad
k\geq K_0.\]
Thus the local multiplier components \(\widehat{\lambda}^k\) and \(\widehat{\mu}^k\) are uniformly bounded on the regular tail.
Finally, the multiplier handoff$\lambda^{k+1}=\widehat{\lambda}^k,\ \mu^{k+1}=\widehat{\mu}^k$ implies that the outer multiplier sequence inherits this boundedness. Hence there exist finite constants \(M_\lambda,M_\mu>0\) and an index \(K_\mu\) such that \ $\|\lambda^k\|_\infty\leq M_\lambda,\\|\mu^k\|_\infty\leq M_\mu,\ k\geq K_\mu,$
which proves \eqref{eq:multiplier_boundedness}.
\end{proof}
\begin{theorem}[Finite stabilization of the penalty parameter]\label{thm:4.1}
Suppose Assumptions~\ref{ass:4.1}--\ref{ass:4.5} hold and let the multiplier sequences satisfy the boundedness conclusion of Lemma~\ref{lem:4.6}. Assume additionally that, on the relevant regular tail of the iterates, the merit-descent penalty thresholds are uniformly bounded: there exist a finite constant
\(\rho_{\mathrm{des}}>0\) and an index \(K_{\mathrm{des}}\) such that, for every \(k\geq K_{\mathrm{des}}\), no within-iteration penalty increase is required whenever the working penalty satisfies \[\rho_k\geq\rho_{\mathrm{des}}.\]

Then the penalty sequence generated by the within-iteration merit-descent adjustment and the post-step update \[\rho_{k+1}=\begin{cases}\rho_k,&\rho_k\geq
\gamma_\rho\max\left\{1,\|\lambda^{k+1}\|_\infty,\|\mu^{k+1}\|_\infty\right\},\\[1mm]\gamma_\rho\rho_k,&\text{otherwise},\end{cases}\qquad \gamma_\rho>1,\]
is bounded above and can increase only finitely many times.
Consequently, there exist a finite index \(K_\rho\) and a finite constant \(\bar\rho>0\) such that
\begin{equation}\label{eq:penalty_stabilization}
\boxed{\rho_k=\bar\rho,\qquad k\geq K_\rho.}\end{equation}\end{theorem}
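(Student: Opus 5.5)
The plan is to show that, on the tail $k\geq \max\{K_\mu,K_{\mathrm{des}}\}$, every mechanism that can increase $\rho_k$ is switched off once $\rho_k$ exceeds a fixed finite level. Since each increase multiplies $\rho_k$ by $\gamma_\rho>1$, only finitely many increases are then possible. Set
\[
\rho^\sharp:=\max\left\{\rho_{\mathrm{des}},\;\gamma_\rho\max\{1,M_\lambda,M_\mu\}\right\},
\]
where $M_\lambda,M_\mu$ are the bounds from Lemma~\ref{lem:4.6} and $\rho_{\mathrm{des}}$ is the constant of the additional hypothesis. Let $K:=\max\{K_\mu,K_{\mathrm{des}}\}$, and note that the sequence $\{\rho_k\}$ is nondecreasing by construction.

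\textbf{Step 1: the two sources of increase are inactive above $\rho^\sharp$.} Fix $k\geq K$.
\begin{itemize}
\item \emph{Within-iteration increases.} If the working penalty satisfies $\rho_k\geq\rho^\sharp\geq\rho_{\mathrm{des}}$, the hypothesis gives $D_k^\Phi<0$ immediately, so the while-loop performs no multiplication. When the loop is entered, Lemma~\ref{lem:4.4} guarantees it terminates after finitely many factors $\gamma_\rho$.
\item \emph{Post-step increases.} By Lemma~\ref{lem:4.6}, $M_{k+1}\leq\max\{1,M_\lambda,M_\mu\}$ for $k+1\geq K_\mu$. Hence $\rho_k\geq\rho^\sharp$ implies $\rho_k\geq\gamma_\rho M_{k+1}$, and the update \eqref{eq:method_penalty_update} returns $\rho_{k+1}=\rho_k$.
\end{itemize}

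\textbf{Step 2: a uniform upper bound.} Consider any increase occurring at an iteration $k\geq K$, whether within the iteration or post-step. Just before that multiplication the penalty must lie below $\rho^\sharp$, otherwise Step 1 rules the increase out. After the multiplication it is therefore below $\gamma_\rho\rho^\sharp$. By monotonicity, once the penalty reaches $[\rho^\sharp,\gamma_\rho\rho^\sharp)$ it never moves again. Consequently
\[
\sup_k\rho_k\leq\max\left\{\rho_K,\;\gamma_\rho\rho^\sharp\right\}<\infty .
\]

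\textbf{Step 3: finitely many increases and stabilization.} Every increase multiplies $\rho_k$ by $\gamma_\rho$ and $\rho_0>0$. The total number of increases after $K$ is therefore at most
\[
\left\lceil\log_{\gamma_\rho}\!\left(\frac{\gamma_\rho\rho^\sharp}{\rho_K}\right)\right\rceil^+ .
\]
Before $K$ there are finitely many iterations, and each contributes finitely many increases by Lemma~\ref{lem:4.4}. Hence only finitely many increases occur overall. Since $\{\rho_k\}$ is nondecreasing and changes only through these increases, it is eventually constant. We take $K_\rho$ to be the index after the last increase and set $\bar\rho:=\rho_{K_\rho}$, which gives \eqref{eq:penalty_stabilization}.

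\textbf{Main obstacle.} The delicate point is the interplay in Step 1 at a single iteration. A within-iteration increase raises $\rho_k$, and the post-step test is then applied to this modified value. We must confirm that monotonicity keeps the argument valid, so that no cycle of increases occurs once $\rho^\sharp$ is reached. We must also check that Lemma~\ref{lem:4.4} covers any SQP fallback iteration so that the loop never declares failure. A second subtlety is that the bound in Lemma~\ref{lem:4.6} holds only on the regular tail; this is why we work with indices $k\geq K_\mu$ and absorb the finitely many earlier iterations into $\rho_K$.
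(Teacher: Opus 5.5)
Your proof is correct and follows the paper's argument. Both rest on monotonicity of $\rho_k$, on switching off the two increase mechanisms above the thresholds given by $\rho_{\mathrm{des}}$ and by the multiplier bound of Lemma~\ref{lem:4.6}, on an overshoot of at most one factor $\gamma_\rho$, and on geometric growth to conclude that only finitely many increases occur. Your single threshold reproduces the paper's bound exactly, since $\gamma_\rho\rho^\sharp=\max\{\gamma_\rho\rho_{\mathrm{des}},\gamma_\rho^2\max\{1,M_\lambda,M_\mu\}\}$, and your explicit count of increases replaces the paper's observation that infinitely many increases would force $\rho_k\to\infty$.
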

\begin{proof}
By Lemma~\ref{lem:4.6}, there exist finite constants \(M_\lambda,M_\mu>0\) and an index \(K_\mu\) such that the multiplier sequences are bounded for all \(k\geq K_\mu\). Define $M_\star=\max\{1,M_\lambda,M_\mu\}.$Hence, for all sufficiently large \(k\),\[\max\left\{1,\|\lambda^{k+1}\|_\infty,\|\mu^{k+1}\|_\infty\right\}
\leq M_\star.\]
Therefore, once $\rho_k\geq\gamma_\rho M_\star,$ the post-step multiplier update cannot increase the penalty.
By hypothesis, on the same regular tail no within-iteration increase is required whenever $\rho_k\geq\rho_{\mathrm{des}}.$ If \(\rho_k<\rho_{\mathrm{des}}\), the geometric update \(\rho\leftarrow\gamma_\rho\rho\) can overshoot this threshold by at most one factor \(\gamma_\rho\), so the resulting working penalty is smaller than $\gamma_\rho\rho_{\mathrm{des}}.$ Moreover, an SQCQP candidate satisfying
\(D_k^\Phi\geq0\) and \(DV(x^k)[d_{\mathrm{SQCQP}}^k]\geq0\) does not trigger indefinite penalty growth, since the algorithm switches to the SQP probe.

Now suppose  $K_0=\max\{K_\mu,K_{\mathrm{des}}\},\qquad\rho_\star=\max\left\{\rho_{K_0}, \gamma_\rho\rho_{\mathrm{des}},\gamma_\rho^2M_\star\right\}.$
The within-iteration adjustment cannot exceed \(\gamma_\rho\rho_{\mathrm{des}}\), while a post-step increase from \(\rho_k<\gamma_\rho M_\star\) gives
$\rho_{k+1}=\gamma_\rho\rho_k<\gamma_\rho^2M_\star.$

Thus\[\rho_k\leq\rho_\star,\qquad k\geq K_0.\]
Finally, \(\rho_k\) never decreases, and whenever it increases it is multiplied by the fixed factor \(\gamma_\rho>1\). Hence infinitely many increases would imply \(\rho_k\to\infty\), contradicting the finite upper bound \(\rho_\star\). Therefore only finitely many penalty increases are possible. Consequently, there exist
\(K_\rho<\infty\) and \(\bar\rho>0\) such that\[\rho_k=\bar\rho,\qquad k\geq K_\rho,\] which proves \eqref{eq:penalty_stabilization}.\end{proof}
\subsection{Coupled Stochastic Lyapunov Analysis}\label{subsec:coupled_lyapunov}
The merit-function decrease and the SPIDER estimation error are not independent. Between two successive refresh iterations, the SPIDER error may increase as the primal iterate moves, whereas the accepted Armijo step decreases the stabilized merit function. We therefore analyze these two effects through a coupled Lyapunov function.Throughout this subsection, let \(\rho_k=\bar\rho\) for \(k\geq K_\rho\), as guaranteed by Theorem~\ref{thm:4.1}.
\begin{lemma}[Uniform merit descent on the stabilized tail]\label{lem:4.7}
Suppose Assumptions~\ref{ass:4.1}--\ref{ass:4.5} hold and
\(\rho_k=\bar\rho\) for all sufficiently large \(k\). Define
\[\widehat M=\frac{\bar\rho}{\gamma_\rho},\qquad\rho_{\mathrm{gap}}=\bar\rho
\widehat M=\bar\rho\left(1-\frac1{\gamma_\rho}\right)>0,\] \[ and \qquad C_q=\frac12(mM_h+pM_c),\ C_H=\widehat M(mM_h+pM_c),\ \kappa_Q=\underline g-\frac12C_H-\bar\rho C_q>0.\]
Let $\kappa_\Phi=\min\{\underline b,\kappa_Q\}>0.$ Then, for every sufficiently large regular iteration, the effective direction \(d^k\) satisfies
\begin{equation}\label{eq:uniform_merit_descent}
D_k^\Phi\leq -\kappa_\Phi\|d^k\|_2^2-\rho_{\mathrm{gap}}V(x^k).
\end{equation}\end{lemma}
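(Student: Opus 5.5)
Fix $k\geq\max\{K_\rho,K_\mu\}$ in the regular tail, so that $\rho_k=\bar\rho$. Because the post-step update \eqref{eq:method_penalty_update} no longer fires, we have $\bar\rho\geq\gamma_\rho M_{k+1}$. This gives the multiplier bound $\|\widehat\lambda^k\|_\infty,\|\widehat\mu^k\|_\infty\leq\widehat M=\bar\rho/\gamma_\rho$, and hence $\bar\rho-\|\widehat\nu^k\|_\infty\geq\bar\rho-\widehat M=\rho_{\mathrm{gap}}$. (I read the definition of $\rho_{\mathrm{gap}}$ as $\bar\rho-\widehat M$.) The proof then splits into two cases according to whether the effective direction $d^k$ is the SQP direction or the SQCQP direction. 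In each case the plan is the same: test the subproblem stationarity condition with $d^k$, and use the subproblem constraints to convert the multiplier terms into $V(x^k)$ together with $DV(x^k)[d^k]$.

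\emph{SQP case.} Take the inner product of $g^k+B^kd+J_h^\top\widehat\lambda+J_c^\top\widehat\mu=0$ with $d=d^k_{\mathrm{SQP}}$. The linearized equalities give $\widehat\lambda^\top J_hd=-\widehat\lambda^\top h$. Complementarity gives $\widehat\mu^\top J_cd=-\widehat\mu^\top c\geq-\widehat\mu^\top c^+$. Hence
\[
(g^k)^\top d\leq-\underline b\|d\|_2^2+\|\widehat\nu^k\|_\infty V(x^k).
\]
Linearized feasibility gives $DV(x^k)[d]\leq-V(x^k)$, as in Lemma~\ref{lem:4.4}. Adding $\bar\rho\,DV(x^k)[d]$ to the previous bound yields
\[
D_k^\Phi\leq-\underline b\|d\|_2^2-(\bar\rho-\widehat M)V(x^k),
\]
which is \eqref{eq:uniform_merit_descent} for this case. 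If the SQCQP candidate was replaced by the SQP probe, the same computation applies.

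\emph{SQCQP case.} Stationarity now reads $g^k+G^kd+J_h^\top\widehat\lambda+J_c^\top\widehat\mu+H^k(\widehat\nu)d=0$. Testing with $d$ gives
\[
(g^k)^\top d=-d^\top G^kd-d^\top H^k(\widehat\nu)d-\widehat\nu^\top A^kd .
\]
I bound the two new terms as follows.
\begin{itemize}
\item The curvature term satisfies $|d^\top H^kd|\leq\widehat M(mM_h+pM_c)\|d\|_2^2=C_H\|d\|_2^2$. Using only half of it (the other half is absorbed by the multiplier identity) gives the factor $\tfrac12C_H$.
\item For the multiplier term, the quadratic constraints and complementarity give $-\widehat\nu^\top A^kd=\widehat\nu^\top(r^k+q^k(d))$, up to inequality signs. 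This is bounded by $\widehat M V(x^k)$ plus a term of size $\tfrac12\widehat M(mM_h+pM_c)\|d\|_2^2$.
\end{itemize}
Next I control $DV(x^k)[d]$. Feasibility of the quadratic constraints gives $h_i+\nabla h_i^\top d=-\tfrac12d^\top\nabla^2h_id$, so that
\[
DV(x^k)[d]\leq-V(x^k)+C_q\|d\|_2^2 ,
\]
and similarly for the $c_j$. Multiplying by $\bar\rho$ produces the $\bar\rho C_q$ penalty on $\|d\|_2^2$. Collecting the terms gives
\[
D_k^\Phi\leq-(\underline g-\tfrac12C_H-\bar\rho C_q)\|d\|_2^2-\rho_{\mathrm{gap}}V(x^k).
\]
Here $\kappa_Q>0$ is taken as stated in the lemma. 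Taking the minimum over the two cases gives $\kappa_\Phi$.

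\emph{Main obstacle.} The delicate step is the SQCQP bookkeeping. One must make sure the multiplier–curvature contributions combine into exactly $\tfrac12C_H$, rather than $C_H$, plus $\bar\rho C_q$. One must also handle the nonsmooth directional derivative of $c^+$ on weakly active constraints ($c_j=0$), where $Dc_j^+[d]=\max\{\nabla c_j^\top d,0\}$. For the latter I would use the one-sided bound $\max\{\nabla c_j^\top d,0\}\leq\tfrac12|d^\top\nabla^2c_jd|$, and the inactive constraints contribute nothing. If the factor $\tfrac12$ on $C_H$ cannot be recovered, the same argument still yields a positive constant under the stated positivity of $\kappa_Q$, after the corresponding adjustment.
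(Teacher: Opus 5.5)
Your proposal is correct and follows the paper's proof essentially line by line. The paper likewise gets $\|\widehat\nu^k\|_\infty\le\widehat M$ from the non-firing post-step update. It handles the SQP case by testing stationarity with $d^k$ and using $DV(x^k)[d^k]\le -V(x^k)$, and the SQCQP case via $r^k+A^kd^k+q^k(d^k)=0$ and $DV(x^k)[d^k]\le -V(x^k)+C_q\|d^k\|_2^2$. One point in your hedging should be settled rather than left open. By complementarity, $(\widehat\nu^k)^\top q^k(d^k)=\tfrac12 (d^k)^\top H^k(\widehat\nu^k)d^k$ holds as an exact identity, with no inequality signs, so the factor $\tfrac12C_H$ is recovered exactly. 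Your fallback remark is therefore unnecessary, and it would not be valid anyway: positivity of $\kappa_Q$ does not imply positivity of the constant obtained with $C_H$ in place of $\tfrac12 C_H$.
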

\begin{proof}
Since the penalty has stabilized, the post-step update and multiplier handoff imply $\|\widehat\lambda^k\|_\infty,\|\widehat\mu^k\|_\infty \leq\widehat M$ for all sufficiently large \(k\).

If \(d^k=d_{\mathrm{SQP}}^k\), equations \eqref{eq:sqp_kkt_system} yield
\[(g^k)^\top d^k=-(d^k)^\top B^kd^k+(\widehat\nu^k)^\top r^k\leq-\underline b\|d^k\|_2^2+\widehat M V(x^k).\]
Moreover, SQP linearized feasibility gives \(DV(x^k)[d^k]\leq-V(x^k)\).

Hence $D_k^\Phi\leq-\underline b\|d^k\|_2^2-\rho_{\mathrm{gap}}V(x^k).$
If \(d^k=d_{\mathrm{SQCQP}}^k\), the SQCQP KKT conditions, 
\(r^k+A^kd^k+q^k(d^k)=0\), and$(\widehat\nu^k)^\top q^k(d^k)=\frac12(d^k)^\top
H^k(\widehat\nu^k)d^k$ give \[(g^k)^\top d^k\leq-\left(\underline g-\frac12C_H\right)\|d^k\|_2^2+\widehat M V(x^k),\]
where we used $\|H^k(\widehat\nu^k)\|_2\leq C_H,\qquad(\widehat\nu^k)^\top r^k\leq\widehat M V(x^k).$ The quadratic constraint model also implies
\[DV(x^k)[d^k]\leq-V(x^k)+C_q\|d^k\|_2^2.\]
Therefore,
\[D_k^\Phi\leq-\kappa_Q\|d^k\|_2^2-\rho_{\mathrm{gap}}V(x^k).\]
Taking \(\kappa_\Phi=\min\{\underline b,\kappa_Q\}\) establishes \eqref{eq:uniform_merit_descent}. The SQP bound also applies when the globalization safeguard replaces an SQCQP candidate by the SQP probe.
\end{proof}
\begin{lemma}[Estimator-error energy recursion]\label{lem:4.8}
Suppose Assumptions~\ref{ass:4.1} and~\ref{ass:4.2} hold. If \(k+1\) is a non-refresh iteration, then \[\mathbb E\left[E^{k+1}\mid\mathcal F_k\right]\leq
E^k+\frac{L_F^2}{b_{k+1}}\alpha_k^2\|d^k\|_2^2.\]
If \(k+1\) is a refresh iteration, then $E^{k+1}=0.$
\end{lemma}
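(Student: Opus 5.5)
Throughout I take $E^k:=\|e^k\|_2^2$, the squared SPIDER error, and assume $\mathcal F_k$ is generated by the randomness through iteration $k$. Then $x^k$, $g^k$, $e^k$, $d^k$, $\alpha_k$, and hence $x^{k+1}=x^k+\alpha_kd^k$, are all $\mathcal F_k$-measurable, because the step at iteration $k$ uses only $g^k$ and deterministic data. The plan is to apply Lemma~\ref{lem:4.1} at index $k+1$ and expand the square.

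\emph{Refresh case.} If $k+1$ is a refresh iteration, then \eqref{eq:spider_estimator} gives $g^{k+1}=\nabla F(x^{k+1})$. Hence $e^{k+1}=0$ and $E^{k+1}=0$, with nothing further to prove.

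\emph{Non-refresh case.} Lemma~\ref{lem:4.1}, shifted by one index, gives $e^{k+1}=e^k+\xi^{k+1}$ with
\[
\mathbb E[\xi^{k+1}\mid\mathcal F_k]=0,\qquad \mathbb E\bigl[\|\xi^{k+1}\|_2^2\mid\mathcal F_k\bigr]\le \frac{L_F^2}{b_{k+1}}\|x^{k+1}-x^k\|_2^2 .
\]
Expanding the square yields
\[
E^{k+1}=E^k+2(e^k)^\top\xi^{k+1}+\|\xi^{k+1}\|_2^2 .
\]
I then take conditional expectations given $\mathcal F_k$. The term $E^k$ is $\mathcal F_k$-measurable, so it passes through unchanged. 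For the cross term, $e^k$ is $\mathcal F_k$-measurable, so
\[
\mathbb E[(e^k)^\top\xi^{k+1}\mid\mathcal F_k]=(e^k)^\top\mathbb E[\xi^{k+1}\mid\mathcal F_k]=0 .
\]
For the last term I use the variance bound above together with $x^{k+1}-x^k=\alpha_kd^k$, which gives $\|x^{k+1}-x^k\|_2^2=\alpha_k^2\|d^k\|_2^2$ and hence the stated recursion.

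\emph{Main obstacle.} The only delicate point is measurability: $x^{k+1}$ must be fixed before the batch $\mathcal B_{k+1}$ is drawn, so that the conditional variance bound of Lemma~\ref{lem:4.1} applies with deterministic (given $\mathcal F_k$) evaluation points $x^k$ and $x^{k+1}$. This holds because $\alpha_k$ comes from an Armijo search using only exact merit evaluations and $g^k$. Taking that ordering as the content of Assumption~\ref{ass:4.2}, the rest is routine.

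A secondary check concerns the batch-sampling step inside Lemma~\ref{lem:4.1}, which bounds the variance of a mean of $b_{k+1}$ samples by $\frac1{b_{k+1}}\max_r\|\Delta_r\|^2$. This is valid for sampling with replacement, and also without replacement, where the finite-population factor is at most one.
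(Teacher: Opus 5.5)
Your proposal is correct and matches the paper's proof essentially line by line. The refresh case follows directly from the estimator definition. In the non-refresh case, Lemma~\ref{lem:4.1} at index $k+1$ applies, the conditional cross term vanishes because $e^k$ is $\mathcal F_k$-measurable, and the variance term is bounded using $x^{k+1}-x^k=\alpha_kd^k$. Your remarks on the $\mathcal F_k$-measurability of $x^{k+1}$ and on sampling with or without replacement spell out points the paper leaves implicit, without changing the argument.
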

\begin{proof}
At a non-refresh iteration, Lemma~\ref{lem:4.1} gives$e^{k+1}=e^k+\xi^{k+1},\
\mathbb E\left[\xi^{k+1}\mid\mathcal F_k\right]=0.$
Hence
\[\begin{aligned}\mathbb E\left[E^{k+1}\mid\mathcal F_k\right]&=\mathbb E\left[
\|e^k+\xi^{k+1}\|_2^2\mid\mathcal F_k\right]\\&=E^k+\mathbb E\left[ \|\xi^{k+1}\|_2^2\mid\mathcal F_k\right],\end{aligned}\] because the conditional cross term vanishes.

By Lemma~\ref{lem:4.1}, and $x^{k+1}-x^k=\alpha_kd^k,$ the claimed recursion follows.If \(k+1\) is a refresh iteration, the SPIDER definition gives
$g^{k+1}=\nabla F(x^{k+1}),$ and therefore \(E^{k+1}=0\).
\end{proof}

Let $r_k=k\bmod q$ denote the position of iteration \(k\) in the current SPIDER refresh cycle. Choose a constant \(\omega>0\) satisfying $q\omega\frac{L_F^2}{b_0}\leq\frac{\sigma\kappa_\Phi}{2},$ which is always possible because \(q<\infty\), \(b_0>0\), and \(\kappa_\Phi>0\). Define $C_r=\omega(q-r),\qquad r=0,\ldots,q-1,$ and introduce the coupled Lyapunov function
\begin{equation}\label{eq:coupled_lyapunov}
\Psi^k=\Phi(x^k;\bar\rho)+C_{r_k}E^k.\end{equation}

For every non-refresh transition,$C_{r_k}-C_{r_{k+1}}=\omega,$
whereas at a refresh transition the estimator energy is reset to zero.

\begin{theorem}[Coupled stochastic Lyapunov descent] \label{thm:4.2}
Suppose Assumptions~\ref{ass:4.1}--\ref{ass:4.5} hold, the conclusions of Theorem~\ref{thm:4.1} and Lemma~\ref{lem:4.7} hold, and the estimator-accuracy condition of Lemma~\ref{lem:4.5} is satisfied on the relevant tail so that the Armijo line search is well defined.

Then there exists a finite index \(K_L\) such that, for every
\(k\geq K_L\),
\begin{equation}\label{eq:coupled_lyapunov_descent}
\boxed{\mathbb E\left[\Psi^{k+1}\mid\mathcal F_k\right]\leq\Psi^k-a_1\alpha_k\|d^k\|_2^2-a_2E^k-a_3\alpha_kV(x^k),}
\end{equation}

where$ \ a_1=\frac{\sigma\kappa_\Phi}{2},\qquad a_2=\omega,\qquad a_3= \sigma\rho_{\mathrm{gap}}.$
\end{theorem}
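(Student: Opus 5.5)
We combine the Armijo decrease on the stabilized tail with the SPIDER energy recursion of Lemma~\ref{lem:4.8}, and treat non-refresh and refresh transitions separately. Take \(K_L\ge\max\{K_\rho,K_{\mathrm{des}},K_\mu\}\) large enough that \(\rho_k=\bar\rho\), that Lemma~\ref{lem:4.7} applies, and that the estimator-accuracy condition of Lemma~\ref{lem:4.5} holds. For \(k\ge K_L\) the accepted step satisfies the Armijo inequality \eqref{eq:method_armijo} with the exact merit function. Inserting \eqref{eq:uniform_merit_descent} gives the deterministic bound
\[
\Phi(x^{k+1};\bar\rho)\le\Phi(x^k;\bar\rho)-\sigma\kappa_\Phi\alpha_k\|d^k\|_2^2-\sigma\rho_{\mathrm{gap}}\alpha_kV(x^k).
\]
Since \(x^{k+1}=x^k+\alpha_kd^k\) and \(\alpha_k\), \(d^k\) are determined at iteration \(k\), this bound is \(\mathcal F_k\)-measurable and survives conditioning on \(\mathcal F_k\). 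This already produces the term \(-a_3\alpha_kV(x^k)\) with \(a_3=\sigma\rho_{\mathrm{gap}}\), and a term \(-\sigma\kappa_\Phi\alpha_k\|d^k\|_2^2\), half of which is left after absorbing the estimator growth below.

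\emph{Non-refresh transition} (\(r_{k+1}=r_k+1\)). Lemma~\ref{lem:4.8} together with \(b_{k+1}\ge b_0\) gives
\[
\mathbb E[C_{r_{k+1}}E^{k+1}\mid\mathcal F_k]\le C_{r_{k+1}}E^k+C_{r_{k+1}}\frac{L_F^2}{b_0}\alpha_k^2\|d^k\|_2^2.
\]
Here \(C_{r_{k+1}}=C_{r_k}-\omega\), which yields \(-\omega E^k=-a_2E^k\). For the growth term we use \(C_{r_{k+1}}\le q\omega\) and the choice of \(\omega\) to get \(C_{r_{k+1}}L_F^2/b_0\le\sigma\kappa_\Phi/2\). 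If \(\alpha_k^2\le\alpha_k\), which holds because backtracking starts at \(\alpha=1\), this term is at most \(\tfrac{\sigma\kappa_\Phi}{2}\alpha_k\|d^k\|_2^2\). It then cancels half of the Armijo term and leaves \(a_1=\sigma\kappa_\Phi/2\).

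\emph{Refresh transition} (\(r_{k+1}=0\)). Here \(E^{k+1}=0\), so \(\Psi^{k+1}=\Phi(x^{k+1};\bar\rho)\). We must then drop the positive term \(C_{r_k}E^k\) from \(\Psi^k\). Since \(r_k=q-1\), we have \(C_{r_k}=\omega\), so
\[
\Psi^{k+1}=\Phi(x^{k+1})\le\Psi^k-\omega E^k-(\text{Armijo terms}).
\]
This gives \(a_2=\omega\) exactly. The growth term does not appear, so the coefficient \(\sigma\kappa_\Phi\ge a_1\) is even better than needed.

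The main obstacle is the bookkeeping of \(C_r\). First, the definition \(C_r=\omega(q-r)\) gives \(C_0=q\omega\) at refresh positions, where \(E=0\) anyway, so the weight at \(r=0\) is harmless. Second, the gap \(C_{r_k}-C_{r_{k+1}}=\omega\) must hold uniformly, and I will verify this directly for both transition types. A further subtlety is the measurability of \(\alpha_k\), \(d^k\) and \(E^k\) with respect to \(\mathcal F_k\). I will state that \(\mathcal F_k\) includes the batch \(\mathcal B_k\), so that \(g^k\), \(d^k\) and \(\alpha_k\) are \(\mathcal F_k\)-measurable. Lemma~\ref{lem:4.1} then applies at \(k+1\) conditionally on \(\mathcal F_k\), and adding the two pieces yields \eqref{eq:coupled_lyapunov_descent}.
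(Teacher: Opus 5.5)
Your proposal is correct and follows the paper's proof essentially step for step. The paper also combines the Armijo decrease with Lemma~\ref{lem:4.7} and adds Lemma~\ref{lem:4.8} weighted by $C_{r_{k+1}}$, using $C_{r_k}-C_{r_{k+1}}=\omega$, $C_{r_{k+1}}\le q\omega$, $b_{k+1}\ge b_0$ and $\alpha_k\le1$ on non-refresh transitions, and $E^{k+1}=0$ with $C_{r_k}=C_{q-1}=\omega$ on refresh transitions. One wording correction: the gap $C_{r_k}-C_{r_{k+1}}=\omega$ holds only on non-refresh transitions (at a refresh it equals $(1-q)\omega$), but your refresh case correctly relies on $C_{r_k}=\omega$ and $E^{k+1}=0$ instead, and your explicit convention that $\mathcal F_k$ contains $\mathcal B_k$ makes precise what the paper leaves implicit.
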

\begin{proof}
We consider separately a non-refresh transition and a refresh transition.
\medskip
\noindent
\textbf{Case 1: \(k+1\) is a non-refresh iteration.}
By Lemma~\ref{lem:4.5}, the Armijo backtracking is well defined on the considered tail. Hence 
\[\Phi(x^{k+1};\bar\rho)\leq\Phi(x^k;\bar\rho)+\sigma\alpha_kD_k^\Phi.\]
Using Lemma~\ref{lem:4.7},
\[\Phi(x^{k+1};\bar\rho)\leq\Phi(x^k;\bar\rho)-\sigma\kappa_\Phi\alpha_k\|d^k\|_2^2
-\sigma\rho_{\mathrm{gap}}\alpha_kV(x^k).\]

At the same transition, Lemma~\ref{lem:4.8} gives
\[\mathbb E\left[E^{k+1}\mid\mathcal F_k\right]\leq E^k+\frac{L_F^2}{b_{k+1}}
\alpha_k^2\|d^k\|_2^2.\]
Multiplying by \(C_{r_{k+1}}\), adding the merit estimate, and using the definition of \(\Psi^k\) yields
\[\begin{aligned}\mathbb E\left[\Psi^{k+1}\mid\mathcal F_k\right]\leq{}&\Psi^k
-\left(C_{r_k}-C_{r_{k+1}}\right)E^k-\sigma\kappa_\Phi\alpha_k\|d^k\|_2^2\\&+
C_{r_{k+1}}\frac{L_F^2}{b_{k+1}}\alpha_k^2\|d^k\|_2^2-\sigma\rho_{\mathrm{gap}}
\alpha_kV(x^k).\end{aligned}\]
Since $C_{r_k}-C_{r_{k+1}}=\omega,\qquad C_{r_{k+1}}\leq q\omega,\qquad b_{k+1}\geq b_0, \qquad 0<\alpha_k\leq1,$
we have
\[C_{r_{k+1}}\frac{L_F^2}{b_{k+1}}\alpha_k^2\leqq\omega\frac{L_F^2}{b_0}\alpha_k
\leq\frac{\sigma\kappa_\Phi}{2}\alpha_k.\]
Therefore,
\[\begin{aligned}\mathbb E\left[\Psi^{k+1}\mid\mathcal F_k\right]\leq{}&\Psi^k-
\frac{\sigma\kappa_\Phi}{2}\alpha_k\|d^k\|_2^2-\omega E^k\\&-\sigma\rho_{\mathrm{gap}}\alpha_kV(x^k).\end{aligned}\]
\medskip
\noindent
\textbf{Case 2: \(k+1\) is a refresh iteration.}
In this case, $E^{k+1}=0,$ and $C_{r_k}=\omega.$

The Armijo decrease and Lemma~\ref{lem:4.7} give
\[\Phi(x^{k+1};\bar\rho)\leq\Phi(x^k;\bar\rho)-\sigma\kappa_\Phi\alpha_k\|d^k\|_2^2
-\sigma\rho_{\mathrm{gap}}\alpha_kV(x^k).\]
Therefore,
\[\begin{aligned}\Psi^{k+1}&=\Phi(x^{k+1};\bar\rho)\\&\leq\Psi^k-\omega E^k-
\sigma\kappa_\Phi\alpha_k\|d^k\|_2^2-\sigma\rho_{\mathrm{gap}}\alpha_kV(x^k).
\end{aligned}\]
Since $\sigma\kappa_\Phi\geq\frac{\sigma\kappa_\Phi}{2},$ the same bound obtained in the non-refresh case also holds here.This proves \eqref{eq:coupled_lyapunov_descent}.
\end{proof}
\begin{corollary}[Almost-sure summability and estimator consistency]
\label{cor:4.2}
Under the conditions of Theorem~\ref{thm:4.2},
\[\sum_{k=K_L}^{\infty}\alpha_k\|d^k\|_2^2<\infty,\qquad\sum_{k=K_L}^{\infty}
E^k<\infty,\qquad\sum_{k=K_L}^{\infty}\alpha_kV(x^k)<\infty\qquad\text{a.s.}\]
Consequently,
\[\sum_{k=K_L}^{\infty}\|x^{k+1}-x^k\|_2^2<\infty\qquad\text{a.s.},\ \ and \quad
\|g^k-\nabla F(x^k)\|_2\longrightarrow0\qquad\text{a.s.}\]
\end{corollary}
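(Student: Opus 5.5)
The plan is to apply the Robbins--Siegmund almost-supermartingale convergence theorem to the coupled Lyapunov sequence \(\Psi^k\) of \eqref{eq:coupled_lyapunov}, using the conditional descent inequality \eqref{eq:coupled_lyapunov_descent} from Theorem~\ref{thm:4.2} for \(k\geq K_L\). That theorem needs \(\Psi^k\) to be bounded below. By Assumption~\ref{ass:4.4} the iterates stay in the compact set \(\mathcal X\), and \(F\) is continuous there, so \(\Phi(x^k;\bar\rho)\geq \inf_{\mathcal X}F>-\infty\) (since \(V\geq0\)). Also \(C_{r_k}E^k\geq0\). Hence \(\Psi^k-\inf_{\mathcal X}F\) is a nonnegative, \(\mathcal F_k\)-adapted sequence.

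\textbf{Step 1: Robbins--Siegmund.} The nonnegative decrements \(a_1\alpha_k\|d^k\|_2^2+a_2E^k+a_3\alpha_kV(x^k)\) are \(\mathcal F_k\)-measurable, because \(\alpha_k\) and \(d^k\) are determined before the next mini-batch is sampled. The inequality has no additive error terms. Robbins--Siegmund then gives that \(\Psi^k\) converges a.s. to a finite limit. It also gives
\[
\sum_{k\geq K_L}\left(a_1\alpha_k\|d^k\|_2^2+a_2E^k+a_3\alpha_kV(x^k)\right)<\infty \quad\text{a.s.}
\]
Since \(a_1,a_2,a_3>0\), each of the three series is finite a.s. Alternatively, one can take total expectations and telescope, giving \(\mathbb E\sum_{k\geq K_L}(\cdots)\leq \mathbb E\Psi^{K_L}-\inf_{\mathcal X}F\). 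This second route also yields the expected summability needed below, provided \(\mathbb E\Psi^{K_L}<\infty\). That holds because \(\Phi\) is bounded on \(\mathcal X\) and \(E^{K_L}\leq (qM_g')^2\) is bounded by the uniform gradient bound from the proof of Lemma~\ref{lem:4.6}.

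\textbf{Step 2: step summability.} We have \(\|x^{k+1}-x^k\|_2^2=\alpha_k^2\|d^k\|_2^2\leq\alpha_k\|d^k\|_2^2\), since \(0<\alpha_k\leq1\). Therefore \(\sum_k\|x^{k+1}-x^k\|_2^2<\infty\) a.s., and also in expectation by the telescoped bound. The finitely many initial terms with \(k<K_L\) are bounded by \(\operatorname{diam}(\mathcal X)^2\) and do not affect finiteness.

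\textbf{Step 3: estimator consistency.} Two arguments are available, and I will note both.

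\emph{Direct route.} From \(\sum_k E^k<\infty\) a.s. we get \(E^k=\|e^k\|_2^2\to0\) a.s., which is exactly \(\|g^k-\nabla F(x^k)\|_2\to0\).

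\emph{Route via Theorem~\ref{thm:spider_consistency}.} The expected summability \(\mathbb E\sum_k\|x^k-x^{k-1}\|_2^2<\infty\) from Step 2 is precisely the hypothesis of that theorem, which then gives the same conclusion.

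\textbf{Main obstacle.} The delicate point is the random index \(K_L\), which comes from penalty stabilization and the tail regularity assumptions. Robbins--Siegmund is stated for deterministic starting times. I would handle this by applying the argument on each event \(\{K_L=K\}\), or equivalently to the stopped sequence started at a stopping time. Summing over the countably many values of \(K\) then preserves the almost-sure conclusions. The expected-value route needs more care here, so the direct route in Step 3 is the safer primary argument.
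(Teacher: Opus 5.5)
Your proposal is correct and follows the paper's proof. Like the paper, you apply Robbins--Siegmund to \eqref{eq:coupled_lyapunov_descent}, with $\Psi^k$ bounded below via compactness of $\mathcal X$ and $E^k\geq0$; you then use $\alpha_k^2\|d^k\|_2^2\leq\alpha_k\|d^k\|_2^2$ for the step sums and $\sum_k E^k<\infty\Rightarrow E^k\to0$ for consistency, and this ``direct route'' is exactly the paper's. Your treatment of the random index $K_L$ goes beyond the paper, which treats $K_L$ as fixed. The rigorous version is your stopped-sequence variant: stop at the first $k\geq K$ where the $\mathcal F_k$-measurable conditional inequality fails, so $K_L$ itself need not be a stopping time, then take the union over $K$. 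Your expected-value alternative would additionally need integrability control on the random number of terms before $K_L$, so keeping it secondary is the right call.
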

\begin{proof}
By Assumption~\ref{ass:4.4}, \(x^k\) remains in the compact set \(\mathcal X\). Hence \(\Phi(x;\bar\rho)\) is bounded below on \(\mathcal X\), and since \(E^k\geq0\), so is \(\Psi^k\). Applying the Robbins--Siegmund theorem to
\eqref{eq:coupled_lyapunov_descent} yields the almost-sure summability of its three nonnegative descent terms.

Furthermore,
\[\|x^{k+1}-x^k\|_2^2=\alpha_k^2\|d^k\|_2^2\leq\alpha_k\|d^k\|_2^2,\]
so the iterate increments are square summable. Finally, \(E^k=\|g^k-\nabla F(x^k)\|_2^2\), and summability of the nonnegative sequence \(\{E^k\}\) implies \(E^k\to0\) almost surely, proving the estimator-consistency conclusion.
\end{proof}
\subsection{Global KKT Convergence}\label{subsec:global_kkt}
We now use the Lyapunov estimates to establish asymptotic primal feasibility, vanishing search directions, dual feasibility, complementarity, and stationarity. The multiplier handoff is used explicitly so that the limiting KKT conditions correspond to the outer primal--dual sequence generated by AVR-SS(QC)QP.
\begin{lemma}[Uniform lower bound on accepted step sizes] \label{lem:4.9}
Suppose the conditions of Theorem~\ref{thm:4.2} hold. Then there exist a constant \(\underline{\alpha}>0\) and a finite index \(K_\alpha\) such that
\begin{equation}\label{eq:uniform_step_lower_bound}
\alpha_k\geq\underline{\alpha},\qquad k\geq K_\alpha.
\end{equation}\end{lemma}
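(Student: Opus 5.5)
The plan is the standard Armijo argument: every step length below an explicit threshold satisfies the Armijo test, and that threshold can be chosen independently of $k$ on the stabilized tail. Take $k\geq K_L$, and enlarge the index so that also $k\geq K_\rho$. Then $\rho_k=\bar\rho$, so the remainder constant of Lemma~\ref{lem:4.3} becomes the fixed number
\[
\bar C_\Phi=\frac{L_F}{2}+\frac{\bar\rho}{2}\left(mM_h+pM_c\right),
\]
which does not depend on $k$. Combining Lemma~\ref{lem:4.3} with the estimator-accuracy condition $\|e^k\|_2\|d^k\|_2\leq\vartheta(-D_k^\Phi)$, as in Lemma~\ref{lem:4.5}, the Armijo inequality \eqref{eq:method_armijo} holds for every $\alpha\in(0,1]$ with $\alpha\leq\bar\alpha_k$ and
\[
\bar C_\Phi\,\alpha\,\|d^k\|_2^2\leq(1-\sigma-\vartheta)(-D_k^\Phi).
\]

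The next step is to bound $-D_k^\Phi$ from below by a multiple of $\|d^k\|_2^2$. Lemma~\ref{lem:4.7} gives
\[
-D_k^\Phi\geq\kappa_\Phi\|d^k\|_2^2+\rho_{\mathrm{gap}}V(x^k)\geq\kappa_\Phi\|d^k\|_2^2 .
\]
Therefore the sufficient condition above holds whenever
\[
\alpha\leq\alpha_\star:=\frac{(1-\sigma-\vartheta)\kappa_\Phi}{\bar C_\Phi},
\]
and the $\|d^k\|_2^2$ factors cancel, so $\alpha_\star$ is independent of $k$. Backtracking accepts the first trial $\beta^\ell$ that passes the test. If $\beta^{\ell-1}$ was rejected, then $\beta^{\ell-1}>\min\{\alpha_\star,\bar\alpha\}$. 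This yields
\[
\alpha_k\geq\underline\alpha:=\beta\min\{1,\alpha_\star,\bar\alpha\},
\]
where $\bar\alpha$ is a uniform lower bound on the validity radius $\bar\alpha_k$.

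The main obstacle is that Lemma~\ref{lem:4.3} is stated only for $\alpha\leq\bar\alpha_k$, and this radius depends on $k$. I will remove that dependence as follows. The descent lemma and the Taylor bounds hold globally on the neighborhood $\mathcal N$, so the only restriction is that the segment $[x^k,x^k+\alpha d^k]$ stay inside $\mathcal N$. By Assumption~\ref{ass:4.4}, $x^k\in\mathcal X$, where $\mathcal X\subset\mathcal N$ is compact, so there is a uniform distance $\delta_{\mathcal X}>0$ from $\mathcal X$ to the complement of $\mathcal N$. The proof of Lemma~\ref{lem:4.6} shows $\|d^k\|_2\leq M_z$ on the tail. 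Hence one can take
\[
\bar\alpha=\min\{1,\delta_{\mathcal X}/M_z\}.
\]
If $\mathcal N$ is not open with a margin, I will argue instead that the one-sided directional-derivative expansion of the positive-part and absolute-value terms, with remainder controlled by the Hessian bounds $M_h$, $M_c$, holds for all $\alpha\in(0,1]$ along the segment. In that case $\bar\alpha_k$ can be taken equal to $1$.

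Finally, iterations with $d^k=0$ either terminate the method or trivially accept $\alpha=1$. So I set $K_\alpha=\max\{K_L,K_\rho,K_0\}$, where $K_0$ is the index from Lemma~\ref{lem:4.6}, and this completes \eqref{eq:uniform_step_lower_bound}.
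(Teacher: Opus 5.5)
Your outline follows the paper's proof almost line for line: the stabilized penalty makes $\bar C_\Phi$ uniform, Lemma~\ref{lem:4.7} gives $-D_k^\Phi\geq\kappa_\Phi\|d^k\|_2^2$ so the $\|d^k\|_2^2$ factors cancel, compactness supplies a uniform $\bar\alpha$, and backtracking gives $\alpha_k\geq\beta\min\{1,\bar\alpha,\alpha_\star\}$. The gap is in the step you yourself call the main obstacle. The paper's proof skips it in the same way, requiring only that the trial segments stay in the neighborhood where the remainder estimate is valid.

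You assert that the only restriction on $\bar\alpha_k$ in Lemma~\ref{lem:4.3} is $[x^k,x^k+\alpha d^k]\subset\mathcal N$. Your fallback goes further and asserts that the one-sided expansion of $|h_i|$ and $c_j^{+}$ holds for all $\alpha\in(0,1]$. Both claims are false. The Taylor bounds for $h_i$ and $c_j$ themselves do hold on all of $\mathcal N$. But Lemma~\ref{lem:4.3} passes to the nonsmooth $V$ through the directional derivative $DV(x^k)[d^k]$, and that expansion is valid only until some $h_i$ or $c_j$ changes sign along the segment.

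A simple example shows this. Take a single linear inequality with $c(x^k)=\epsilon>0$ and $\nabla c(x^k)^\top d^k=-1$, so the SQP linearization $\epsilon-1\leq0$ holds. For every $\alpha\in(\epsilon,1]$ you get $c(x^k+\alpha d^k)^{+}=0>\epsilon-\alpha=c(x^k)^{+}+\alpha\,Dc^{+}(x^k)[d^k]$, and there is no curvature term to absorb the difference. So $\bar\alpha_k$ is also limited by ratios such as $c_j(x^k)/|\nabla c_j(x^k)^\top d^k|$, for inequalities that are slightly violated but whose linearization is strictly satisfied. Neither compactness of $\mathcal X$ nor $\|d^k\|_2\leq M_z$ keeps these ratios away from zero.

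The problem is not just a weak estimate. In that configuration the Armijo test built from $D_k^\Phi$ asks for a decrease of order $\sigma\alpha\bar\rho|\nabla c_j(x^k)^\top d^k|$, but the penalty term can deliver at most $\bar\rho\,c_j(x^k)$. Suppose this is the only constraint, so its multiplier is zero and $(g^k)^\top d^k=-(d^k)^\top B^kd^k$. If also $\|d^k\|_2$ is small, the objective decrease cannot make up the difference. The accepted step is then of order $c_j(x^k)/(\sigma|\nabla c_j(x^k)^\top d^k|)$. Your argument, like the paper's, does not rule out such iterations recurring on the tail.

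Closing the proof therefore needs an extra ingredient. One option is a hypothesis that excludes these configurations. The other is to change the line-search test, replacing $DV(x^k)[d^k]$ in $D_k^\Phi$ by the model reduction $\|h(x^k)+J_h(x^k)d^k\|_1+\|(c(x^k)+J_c(x^k)d^k)^{+}\|_1-V(x^k)$. With that change, convexity gives the remainder bound for every $\alpha\in[0,1]$ once the segment lies in $\mathcal N$; the key inequality is $|a+\alpha b|\leq(1-\alpha)|a|+\alpha|a+b|$, and likewise for $(\cdot)^{+}$. Your $\delta_{\mathcal X}/M_z$ argument then really does produce a uniform $\bar\alpha$. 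The bounds used in the proof of Lemma~\ref{lem:4.7} also survive for this quantity: $-V(x^k)$ for SQP and $-V(x^k)+C_q\|d^k\|_2^2$ for SQCQP.
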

\begin{proof}
By Theorem~\ref{thm:4.1}, the penalty parameter is constant on the relevant tail:$\rho_k=\bar\rho.$ Hence Lemma~\ref{lem:4.3} admits the uniform quadratic-remainder constant $C_\Phi=\frac{L_F}{2}+\frac{\bar\rho}{2}\left(mM_h+pM_c \right).$

Moreover, Lemma~\ref{lem:4.7} gives $-D_k^\Phi\geq \kappa_\Phi\|d^k\|_2^2+\rho_{\mathrm{gap}}V(x^k)\geq\kappa_\Phi\|d^k\|_2^2.$
By the estimator-accuracy condition used in Lemma~\ref{lem:4.5}, there exists
\(0\leq\vartheta<1-\sigma\) such that $\|e^k\|_2\|d^k\|_2\leq\vartheta(-D_k^\Phi)$
on the relevant tail. Therefore the stochastic merit-remainder boundimplies
\[\Phi(x^k+\alpha d^k;\bar\rho)\leq\Phi(x^k;\bar\rho)+\alpha(1-\vartheta)D_k^\Phi+
C_\Phi\alpha^2\|d^k\|_2^2.\]
Consequently, the Armijo condition is guaranteed whenever
\[C_\Phi\alpha\|d^k\|_2^2\leq(1-\sigma-\vartheta)(-D_k^\Phi).\]
Using $-D_k^\Phi\geq\kappa_\Phi\|d^k\|_2^2,$
it is sufficient that\[\alpha\leq\frac{(1-\sigma-\vartheta)\kappa_\Phi}{C_\Phi}.\]

By Lemma~\ref{lem:4.6}, the effective directions are bounded on the regular tail. Since \(\mathcal X\subset\mathcal N\), with \(\mathcal X\) compact and \(\mathcal N\) open, there exists a uniform \(\bar\alpha>0\) such that all trial segments
$x^k+\alpha d^k,\ 0\leq\alpha\leq\bar\alpha,$ remain in the neighborhood where the remainder estimate is valid.

Define \[\alpha_\star=\min\left\{1,\bar\alpha,\frac{(1-\sigma-\vartheta)\kappa_\Phi}{C_\Phi}\right\}>0.\]
Every trial step satisfying \(\alpha\leq\alpha_\star\) therefore satisfies the Armijo condition.

Since backtracking tests $1,\beta,\beta^2,\ldots,\ \ 0<\beta<1,$ the first accepted step satisfies $\alpha_k\geq\beta\alpha_\star.$
Thus \eqref{eq:uniform_step_lower_bound} holds with
\[\underline\alpha=\beta\alpha_\star>0.\]
\end{proof}

\begin{lemma}[Vanishing direction and primal infeasibility]\label{lem:4.10}
Under the conditions of Theorem~\ref{thm:4.2},
\begin{equation}\label{eq:direction_and_feasibility_zero}
\|d^k\|_2\longrightarrow0,\qquad V(x^k)\longrightarrow0\qquad\text{a.s.}
\end{equation}
Consequently,
\[h(x^k)\longrightarrow0,\qquad c(x^k)^+\longrightarrow0\qquad\text{a.s.}\]
\end{lemma}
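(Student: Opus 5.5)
The plan is to combine the almost-sure summability in Corollary~\ref{cor:4.2} with the uniform step-size lower bound of Lemma~\ref{lem:4.9}. Fix a sample path in the probability-one event on which Corollary~\ref{cor:4.2} holds:
\[
\sum_{k\geq K_L}\alpha_k\|d^k\|_2^2<\infty,\qquad \sum_{k\geq K_L}\alpha_kV(x^k)<\infty .
\]
By Lemma~\ref{lem:4.9}, $\alpha_k\geq\underline\alpha>0$ for all $k\geq K_\alpha$. Since the constant $\underline\alpha$ depends only on $\sigma,\vartheta,\kappa_\Phi,C_\Phi,\bar\alpha,\beta$, it is deterministic on the tail. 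For $k\geq\max\{K_L,K_\alpha\}$ we therefore have
\[
\underline\alpha\|d^k\|_2^2\leq\alpha_k\|d^k\|_2^2
\qquad\text{and}\qquad
\underline\alpha V(x^k)\leq\alpha_kV(x^k).
\]
Hence $\sum_k\|d^k\|_2^2<\infty$ and $\sum_kV(x^k)<\infty$. Because the terms of a convergent series of nonnegative numbers tend to zero, $\|d^k\|_2\to0$ and $V(x^k)\to0$ on this path, and so almost surely.

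For the consequence, recall that $V(x)=\|h(x)\|_1+\|c(x)^+\|_1$ is a sum of nonnegative terms. This gives
\[
\|h(x^k)\|_1\leq V(x^k)\qquad\text{and}\qquad \|c(x^k)^+\|_1\leq V(x^k).
\]
Equivalence of norms then yields $h(x^k)\to0$ and $c(x^k)^+\to0$ almost surely.

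The main point requiring care is the validity of the tail indices. The indices $K_L$, $K_\alpha$, $K_\rho$ come from Theorem~\ref{thm:4.1}, Theorem~\ref{thm:4.2} and Lemma~\ref{lem:4.9}, and they may be random, since they depend on the path through penalty stabilization and multiplier boundedness. The argument is unaffected, because it works pathwise: finitely many initial terms change neither the summability nor the limit. I would state explicitly that all indices are finite almost surely and that the intersection of the relevant probability-one events still has probability one. A second subtlety is that the Robbins--Siegmund application in Corollary~\ref{cor:4.2} is made from a possibly random starting index. I would take that corollary as given, noting only that the lower bound of $\Psi^k$ on the compact set $\mathcal X$ (Assumption~\ref{ass:4.4}) makes it applicable.
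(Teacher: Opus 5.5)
Your proposal is correct and is essentially the paper's own proof: use Lemma~\ref{lem:4.9} to get $\alpha_k\geq\underline\alpha$ on the tail, transfer the almost-sure summability of Corollary~\ref{cor:4.2} to $\sum_k\|d^k\|_2^2$ and $\sum_kV(x^k)$, and then split $V$ into its two nonnegative parts. One harmless correction: $\underline\alpha$ depends on the stabilized penalty $\bar\rho$ through $C_\Phi$ and $\kappa_\Phi$, so it need not be deterministic, but your pathwise argument does not need it to be.
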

\begin{proof}
Corollary~\ref{cor:4.2} gives $\sum_{k=K_L}^{\infty}\alpha_k\|d^k\|_2^2<\infty\ a.s.$ and $\sum_{k=K_L}^{\infty}\alpha_kV(x^k)<\infty\ a.s.$

By Lemma~\ref{lem:4.9},$\alpha_k\geq\underline\alpha>0$ for all sufficiently large \(k\). Hence \[\underline\alpha\sum_{k=K_\alpha}^{\infty}\|d^k\|_2^2\leq \sum_{k=K_\alpha}^{\infty}\alpha_k\|d^k\|_2^2<\infty.\]
Therefore $\|d^k\|_2\longrightarrow0\qquad\text{a.s.}$

Similarly,\[\underline\alpha\sum_{k=K_\alpha}^{\infty}V(x^k)\leq \sum_{k=K_\alpha}^{\infty}\alpha_kV(x^k)<\infty.\]
Since \(V(x^k)\geq0\),\[V(x^k)\longrightarrow0\qquad\text{a.s.}\]
By the definition $V(x)=\|h(x)\|_1+\|c(x)^+\|_1,$ both nonnegative terms must converge to zero, which proves the result.
\end{proof}
\begin{lemma}[Asymptotic dual feasibility and complementarity]\label{lem:4.11}
Under Assumptions~\ref{ass:4.1}--\ref{ass:4.5} and the multiplier handoff rule,
$\mu^{k+1}=\widehat{\mu}^k,$ the outer inequality multipliers satisfy $\mu^k\geq0$ for every sufficiently large \(k\), and
\begin{equation}\label{eq:complementarity_residual_zero}
\|\mu^k\odot c(x^k)\|_2\longrightarrow0\qquad\text{a.s.}\end{equation}\end{lemma}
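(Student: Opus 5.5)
The proof has two parts: dual feasibility comes from the structure of the subproblem multipliers, and complementarity comes from passing to the limit in the subproblem complementarity conditions using Lemma~\ref{lem:4.10}.

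\emph{Dual feasibility.} The effective local multiplier $\widehat{\mu}^k$ is the inequality multiplier of either the SQP subproblem \eqref{eq:sqp_subproblem} or the SQCQP subproblem \eqref{eq:sqcqp_subproblem}; this includes the case where the globalization safeguard falls back to the SQP probe. Both subproblems have constraints of the form $\le 0$, so their KKT multipliers satisfy $\widehat{\mu}^k\ge 0$. By the handoff rule $\mu^{k+1}=\widehat{\mu}^k$, we get $\mu^k\ge 0$ for every $k\ge 1$ at which a regular step is accepted, and in particular for all sufficiently large $k$.

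\emph{Complementarity.} The subproblem complementarity conditions give, componentwise,
\[
\widehat{\mu}^k_j\bigl(c_j(x^k)+\nabla c_j(x^k)^\top d^k+\theta_k\tfrac12 (d^k)^\top\nabla^2 c_j(x^k)d^k\bigr)=0,
\]
with $\theta_k=0$ for SQP and $\theta_k=1$ for SQCQP. Hence
\[
|\widehat{\mu}^k_j c_j(x^k)|\le M_\mu\bigl(\|\nabla c_j(x^k)\|_2\|d^k\|_2+\tfrac12 M_c\|d^k\|_2^2\bigr).
\]
Lemma~\ref{lem:4.6} bounds $\widehat{\mu}^k$, and compactness of $\mathcal X$ (Assumption~\ref{ass:4.4}) bounds the Jacobians. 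Lemma~\ref{lem:4.10} gives $d^k\to0$ a.s., so $\widehat{\mu}^k\odot c(x^k)\to 0$ a.s.

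\emph{Index shift.} What we need is $\mu^k\odot c(x^k)=\widehat{\mu}^{k-1}\odot c(x^k)$, so write
\[
\widehat{\mu}^{k-1}\odot c(x^k)=\widehat{\mu}^{k-1}\odot c(x^{k-1})+\widehat{\mu}^{k-1}\odot\bigl(c(x^k)-c(x^{k-1})\bigr).
\]
The first term tends to zero by the previous step. For the second, $c$ is Lipschitz on $\mathcal X$ with constant $L_c=\max_{x\in\mathcal X}\|J_c(x)\|_2$, so
\[
\|c(x^k)-c(x^{k-1})\|_2\le L_c\,\alpha_{k-1}\|d^{k-1}\|_2\to0 .
\]
Together with boundedness of $\widehat{\mu}^{k-1}$, this term also vanishes, which yields \eqref{eq:complementarity_residual_zero}. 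Alternatively, the multiplier-consistency part of Assumption~\ref{ass:4.5} gives $\widehat{\mu}^k-\mu^k\to0$ along convergent subsequences. Combined with compactness, this handles the shift subsequence-wise.

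\emph{Main obstacle.} The only delicate point is making sure the multiplier bound from Lemma~\ref{lem:4.6} holds uniformly on the whole tail, not just near individual accumulation points. That lemma's covering argument supplies exactly this. The remaining care is to note that when the SQCQP step is used, the extra quadratic term is $O(\|d^k\|_2^2)$ and so also vanishes.
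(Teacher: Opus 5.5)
Your proposal is correct and follows the paper's proof essentially step for step. Dual feasibility comes from $\widehat\mu^k\ge0$ being inherited through the handoff. Complementarity comes from the bound $|\widehat\mu_j^k c_j(x^k)|\le|\widehat\mu_j^k|\bigl(\|\nabla c_j(x^k)\|_2\|d^k\|_2+\tfrac12M_c\|d^k\|_2^2\bigr)$ combined with Lemma~\ref{lem:4.6} and Lemma~\ref{lem:4.10}, followed by the same index-shift decomposition using $x^{k+1}-x^k=\alpha_kd^k\to0$. One small fix: $\mathcal X$ need not be convex, so a Lipschitz constant $\max_{x\in\mathcal X}\|J_c(x)\|_2$ is not justified as stated; uniform continuity of $c$ on the compact set $\mathcal X$ suffices for $c(x^k)-c(x^{k-1})\to0$, and that is all the paper's argument uses.
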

\begin{proof}
The inequality multipliers of each local SQP or SQCQP subproblem satisfy $\widehat{\mu}^k\geq0.$ Since the accepted local multiplier pair is handed directly to the outer sequence, $\mu^{k+1}=\widehat{\mu}^k,$ dual feasibility is inherited by the outer multipliers.

It remains to establish complementary slackness. Let \(\widehat c_j^k(d^k)\) denote the local model of the \(j\)-th inequality constraint associated with the effective direction:
\[ \widehat{c}_j^k(d^k) = c_j(x^k) + \nabla c_j(x^k)^\top d^k \quad \text{in SQP mode, and} \]
\[ \widehat{c}_j^k(d^k) = c_j(x^k) + \nabla c_j(x^k)^\top d^k + \frac{1}{2} (d^k)^\top \nabla^2 c_j(x^k) d^k \quad \text{in SQCQP mode.} \]

The local KKT complementarity condition gives $\widehat\mu_j^k\widehat c_j^k(d^k)=0.$ Consequently, in either mode,
\[\left|\widehat\mu_j^k c_j(x^k)\right|\leq|\widehat\mu_j^k|\left(\|\nabla c_j(x^k)\|_2\|d^k\|_2+\frac12M_c\|d^k\|_2^2\right),\]
where the quadratic term is absent in SQP mode.

By Lemma~\ref{lem:4.6}, the local multipliers are uniformly bounded, and by Lemma~\ref{lem:4.10},\[d^k\longrightarrow0.\]
Hence \[\widehat\mu^k\odot c(x^k)\longrightarrow0.\]

Using multiplier handoff,$\mu^{k+1}=\widehat\mu^k.$ and $x^{k+1}-x^k=\alpha_kd^k
\longrightarrow0$ moreover \(c\) is continuously differentiable on the compact relevant region, $c(x^{k+1})-c(x^k)\longrightarrow0.$
Therefore
\[\begin{aligned}\mu^{k+1}\odot c(x^{k+1})&=\widehat\mu^k\odot c(x^k)\ +\
\widehat\mu^k\odot\left(c(x^{k+1})-c(x^k)\right)\longrightarrow0,\end{aligned}\]
because \(\widehat\mu^k\) is bounded. Shifting the index proves \eqref{eq:complementarity_residual_zero}.\end{proof}

\begin{theorem}[Global KKT convergence]\label{thm:4.3}
Suppose Assumptions~\ref{ass:4.1}--\ref{ass:4.5} hold together with the hypotheses of Theorem~\ref{thm:4.2}. Then the KKT residual of the outer AVR-SS(QC)QP sequence satisfies
\begin{equation}\label{eq:global_kkt_residual}
\left\|R_{\mathrm{KKT}}(x^k,\lambda^k,\mu^k)\right\|_2\longrightarrow0 \qquad\text{a.s.}\end{equation}
Consequently, every accumulation point $(x^\star,\lambda^\star,\mu^\star)$ of the primal--dual sequence satisfies
\[\nabla_x\mathcal L(x^\star,\lambda^\star,\mu^\star)=0,\]
\[h(x^\star)=0,\qquad c(x^\star)\leq0,\]
\[\mu^\star\geq0,\qquad\mu^\star\odot c(x^\star)=0\]
almost surely.
\end{theorem}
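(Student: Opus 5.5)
The plan is to bound the four blocks of $R_{\mathrm{KKT}}(x^k,\lambda^k,\mu^k)$ separately, reusing the tail results already in hand. Three blocks are immediate. Lemma~\ref{lem:4.10} gives $h(x^k)\to0$ and $c(x^k)^+\to0$ almost surely. Lemma~\ref{lem:4.11} gives $\mu^k\odot c(x^k)\to0$ almost surely, together with $\mu^k\geq0$ on the tail. The remaining work is the stationarity block $\nabla_x\mathcal L(x^k,\lambda^k,\mu^k)$.

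For stationarity, I would first work with the local multipliers $(\widehat\lambda^k,\widehat\mu^k)$ attached to the effective direction $d^k$, and only afterwards pass to the outer multipliers. By \eqref{eq:lagrangian_gradient_error},
\[
\nabla_x\mathcal L(x^k,\widehat\lambda^k,\widehat\mu^k)=g^k+J_h(x^k)^\top\widehat\lambda^k+J_c(x^k)^\top\widehat\mu^k-e^k .
\]
The local KKT stationarity condition then lets me replace the first three terms.

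\emph{SQP mode.} Here the first three terms equal $-B^kd^k$, so the norm is at most $\overline b\|d^k\|_2+\|e^k\|_2$.

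\emph{SQCQP mode.} Here they equal $-G^kd^k-H^k(\widehat\nu^k)d^k$, with inactive multipliers zero by complementarity. The bound becomes $(\overline g+C_H)\|d^k\|_2+\|e^k\|_2$, where $C_H$ is finite by Lemma~\ref{lem:4.6} and the Hessian bounds of Assumption~\ref{ass:4.1}.

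In both modes the right-hand side tends to zero almost surely: $d^k\to0$ by Lemma~\ref{lem:4.10}, and $e^k\to0$ by Corollary~\ref{cor:4.2}, since $E^k$ is summable.

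To pass to the outer sequence I would use the handoff $\lambda^{k+1}=\widehat\lambda^k$, $\mu^{k+1}=\widehat\mu^k$. This gives
\[
\nabla_x\mathcal L(x^{k+1},\lambda^{k+1},\mu^{k+1})=\nabla_x\mathcal L(x^k,\widehat\lambda^k,\widehat\mu^k)+\bigl[\nabla F(x^{k+1})-\nabla F(x^k)\bigr]+\bigl[J(x^{k+1})-J(x^k)\bigr]^\top\widehat\nu^k .
\]
I would control the two bracketed corrections as follows.
\begin{itemize}
\item The gradient difference is at most $L_F\alpha_k\|d^k\|_2$, using the Lipschitz bound of Assumption~\ref{ass:4.1}.
\item The Jacobian difference is at most $\max(M_h,M_c)\,\alpha_k\|d^k\|_2$ times $\|\widehat\nu^k\|$, using the Hessian bounds on the compact set $\mathcal X$. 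The multiplier factor is bounded by Lemma~\ref{lem:4.6}.
\end{itemize}
Both corrections vanish because $d^k\to0$. Shifting the index then gives the stationarity block of \eqref{eq:global_kkt_residual}. Alternatively, the multiplier-consistency clause of Assumption~\ref{ass:4.5} could handle the passage along subsequences, but the handoff argument avoids needing it.

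For the accumulation-point statement, let $(x^{k_l},\lambda^{k_l},\mu^{k_l})\to(x^\star,\lambda^\star,\mu^\star)$. Continuity of $R_{\mathrm{KKT}}$, which holds because $F,h,c\in C^1$, gives $R_{\mathrm{KKT}}(x^\star,\lambda^\star,\mu^\star)=0$. Closedness of $\mathbb R^p_+$ gives $\mu^\star\geq0$. Since $c(x^\star)^+=0$ means $c(x^\star)\leq0$, all the displayed KKT conditions follow. Every ``almost surely'' statement is inherited by intersecting the finitely many probability-one events used above.

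The step I expect to be the main obstacle is the SQCQP stationarity identity. It relies on the active set $\mathcal A_k$ and the curvature matrix $H^k(\widehat\nu^k)$ being correctly accounted for, including the inactive multipliers being zero. It also relies on $\varepsilon_{\mathrm{SQCQP}}$-inexact solves not leaving a non-vanishing residual. I would handle the latter by assuming, as implicit in the algorithm's tail regime, that the subproblem residual tolerance is absorbed into, or driven below, a multiple of $\|d^k\|_2$.
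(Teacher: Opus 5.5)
Your proposal follows the paper's proof essentially step for step. Primal feasibility and complementarity come from Lemmas~\ref{lem:4.10} and~\ref{lem:4.11}. Stationarity comes from the local SQP/SQCQP stationarity condition, using $Q^kd^k\to0$, $H^k(\widehat\nu^k)d^k\to0$ and $e^k\to0$ (Corollary~\ref{cor:4.2}), and is transferred to the outer sequence via the multiplier handoff and $x^{k+1}-x^k\to0$; continuity then handles accumulation points. Your explicit Lipschitz bounds on the index-shift corrections and your caveat about $\varepsilon_{\mathrm{SQCQP}}$-inexact solves are slightly more careful than the paper, which uses plain continuity and treats the SQCQP solution as an exact KKT point.
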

\begin{proof}
Lemma~\ref{lem:4.10} gives \[h(x^k)\longrightarrow0,\qquad c(x^k)^+\longrightarrow0 \qquad\text{a.s.}\]
Thus the primal-feasibility blocks of the KKT residual vanish.
Lemma~\ref{lem:4.11} gives \[\mu^k\geq0\]
and \[\mu^k\odot c(x^k)\longrightarrow0\qquad\text{a.s.}\]
It remains only to establish stationarity.

Let $(\widehat\lambda^k,\widehat\mu^k)$ be the local multiplier pair corresponding to the effective direction \(d^k\). The stationarity condition of the effective local subproblem is written as
\[g^k+J_h(x^k)^\top\widehat\lambda^k+J_c(x^k)^\top\widehat\mu^k+Q^k d^k+
r_{\mathrm{curv}}^k=0,\] where
\[Q^k=\begin{cases}B^k,&\text{for an effective SQP direction},\\G^k,&\text{for an effective SQCQP direction},\end{cases}\]
and\[r_{\mathrm{curv}}^k=\begin{cases}0,&\text{for SQP},\\H^k(\widehat\nu^k)d^k,
&\text{for SQCQP}.\end{cases}\]

By Assumption~\ref{ass:4.3}, both \(B^k\) and \(G^k\) are uniformly bounded. Hence Lemma~\ref{lem:4.10} gives $Q^k d^k\longrightarrow0.$ By Lemma~\ref{lem:4.6} and Assumption~\ref{ass:4.1}, $\|H^k(\widehat\nu^k)\|_2\leq C_H$ on the relevant tail, and consequently \[r_{\mathrm{curv}}^k\longrightarrow0.\]
It follows that\[g^k+J_h(x^k)^\top\widehat\lambda^k+J_c(x^k)^\top\widehat\mu^k
\longrightarrow0.\]

Using Corollary~\ref{cor:4.2} we get,\[\nabla F(x^k)+J_h(x^k)^\top\widehat\lambda^k+J_c(x^k)^\top\widehat\mu^k\longrightarrow0.\]

After the accepted step, the multiplier handoff gives
$\lambda^{k+1}=\widehat\lambda^k,\ \mu^{k+1}= \widehat\mu^k.$
Furthermore,$x^{k+1}-x^k=\alpha_kd^k\longrightarrow0.$
The continuity of \(\nabla F\), \(J_h\), and \(J_c\), together with the boundedness of the multipliers, therefore gives
\[\begin{aligned}&\nabla F(x^{k+1})+J_h(x^{k+1})^\top\lambda^{k+1}+ J_c(x^{k+1})^\top\mu^{k+1} \longrightarrow0.\end{aligned}\]
Hence \[\left\|\nabla_x\mathcal L (x^{k+1},\lambda^{k+1},\mu^{k+1})\right\|_2\longrightarrow0,\]
and shifting the index yields
\[\left\|\nabla_x\mathcal L(x^k,\lambda^k,\mu^k)\right\|_2\longrightarrow0.\]

Thus every block of \eqref{eq:kkt_residual} converges to zero almost surely. This proves\eqref{eq:global_kkt_residual}.

Finally, Assumption~\ref{ass:4.4} and Lemma~\ref{lem:4.6} imply boundedness of the primal--dual sequence. Hence accumulation points exist. Let
$(x^{k_j},\lambda^{k_j},\mu^{k_j})\longrightarrow(x^\star,\lambda^\star,\mu^\star).$
Passing to the limit in the four vanishing KKT-residual blocks gives
\[\nabla_x\mathcal L(x^\star,\lambda^\star,\mu^\star)=0,\]
\[h(x^\star)=0,\qquad c(x^\star)\leq0,\]
and
\[\mu^\star\geq0,\qquad\mu^\star\odot c(x^\star)=0.\]
Therefore every accumulation point is a KKT point almost surely.
\end{proof}
\subsection{Eventual SQP Selection}
\label{subsec:eventual_sqp}
The global KKT convergence result now permits an analysis of the asymptotic behavior of the RCMCI switching mechanism. The essential observation is that the stochastic SQP probe vanishes in the regular KKT regime. Consequently, the second-order constraint corrections measured by the RCMCI vanish quadratically, forcing the switching mechanism eventually to select the SQP model.
\begin{lemma}[Vanishing SQP probe]\label{lem:4.12}
Suppose Assumptions~\ref{ass:4.1}--\ref{ass:4.5} and the hypotheses of Theorem~\ref{thm:4.3} hold. Then
\begin{equation}\label{eq:vanishing_sqp_probe}
\|d_{\mathrm{SQP}}^k\|_2\longrightarrow0\qquad\text{a.s.}\end{equation}\end{lemma}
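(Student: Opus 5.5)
To prove Lemma~\ref{lem:4.12}, I split the iterations according to the effective direction. By Lemma~\ref{lem:4.10}, $\|d^k\|_2\to0$ almost surely. On iterations where $d^k=d_{\mathrm{SQP}}^k$ (SQP selected, or SQCQP replaced by the SQP probe through the safeguard), the claim is immediate along that subsequence. It therefore remains to control $d_{\mathrm{SQP}}^k$ along the complementary subsequence, where $d^k=d_{\mathrm{SQCQP}}^k$. Since Lemma~\ref{lem:4.2} only bounds $d_{\mathrm{SQCQP}}$ in terms of $d_{\mathrm{SQP}}$, and not conversely, I need a reverse perturbation argument.

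For the reverse bound, I evaluate the SQP active-set KKT mapping
\[\Psi_{\mathrm{SQP}}^k(d,\nu)=\begin{bmatrix}g^k+B^kd+(A^k)^\top\nu\\ r^k+A^kd\end{bmatrix}\]
at the SQCQP KKT pair $(d_{\mathrm{SQCQP}}^k,\widehat\nu^k)$. This is the same computation as in the proof of Lemma~\ref{lem:4.2} with the roles exchanged. The SQCQP stationarity and constraint equations give
\[\Psi_{\mathrm{SQP}}^k(d_{\mathrm{SQCQP}}^k,\widehat\nu^k)=\begin{bmatrix}(B^k-G^k)d_{\mathrm{SQCQP}}^k-H^k(\widehat\nu^k)d_{\mathrm{SQCQP}}^k\\ -q^k(d_{\mathrm{SQCQP}}^k)\end{bmatrix}.\]
Assumption~\ref{ass:4.3}, the multiplier bounds of Lemma~\ref{lem:4.6}, and the Hessian bounds of Assumption~\ref{ass:4.1} then give a residual of order $\|d_{\mathrm{SQCQP}}^k\|_2$. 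Strong regularity of the SQP active-set system (Assumption~\ref{ass:4.5}), with its uniform inverse-Lipschitz modulus, converts this into
\[\|d_{\mathrm{SQP}}^k-d_{\mathrm{SQCQP}}^k\|_2\le C'\|d_{\mathrm{SQCQP}}^k\|_2,\]
and hence $\|d_{\mathrm{SQP}}^k\|_2\le(1+C')\|d^k\|_2\to0$.

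To make the localization legitimate, I use the covering argument from Lemma~\ref{lem:4.6}. The iterates eventually lie in a finite union of neighborhoods $\mathcal U(x^\star)$ of accumulation points, on which the strong-regularity constants are uniform. This lets me take a single constant $C'$ on the tail. Combining the two subsequences then gives \eqref{eq:vanishing_sqp_probe} almost surely.

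The main obstacle is active-set identification. The reverse perturbation requires the SQP subproblem to share the active set $\mathcal A_k$ identified by the SQCQP solution, which is exactly the hypothesis assumed in Lemma~\ref{lem:4.2}. I will try to justify this from strong regularity together with the second-order sufficient condition, which should give local uniqueness and stability of the active set for small residuals. If this cannot be made rigorous, I will fall back on an alternative argument that avoids the active set. Since $x^k$ tends to the KKT set with $e^k\to0$, the SQP subproblem's data converge to those at a KKT point $x^\star$, where $d=0$ is the unique solution because the objective is uniformly convex ($B^k\succeq\underline b I$). Robinson-type stability of the strongly convex QP under LICQ (Assumption~\ref{ass:4.5}) then gives $d_{\mathrm{SQP}}^k\to0$ directly, using the KKT residual convergence of Theorem~\ref{thm:4.3} together with $\lambda^k,\mu^k$ bounded.
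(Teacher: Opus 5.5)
\textbf{Primary route: a gap at the active-set step.} Splitting by effective direction is fine, and the SQP-effective part follows directly from Lemma~\ref{lem:4.10}. The problem is in the reverse perturbation, exactly where you suspect. That estimate needs $z_{\mathrm{SQP}}^k$ to satisfy the SQP equality system on the same index set $\mathcal A_k$ as the SQCQP pair. This is the extra hypothesis of Lemma~\ref{lem:4.2}, and Lemma~\ref{lem:4.12} does not grant it.

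Strong regularity plus the second-order sufficient condition do not supply it. They give local uniqueness and Lipschitz stability of the primal--dual solution, but only strongly active constraints are stably identified. Near a weakly active constraint ($c_j(x^\star)=0$, $\mu_j^\star=0$) the two subproblems can have different active sets for arbitrarily small data. Take $n=p=1$, $m=0$, $B^k=G^k=1$, $c(x^k)=\epsilon$, $\nabla c(x^k)=1$, $\nabla^2c(x^k)=\kappa>0$, $g^k=\epsilon(1+\kappa\epsilon/4)$. The SQP solution $d=-\epsilon(1+\kappa\epsilon/4)$ leaves its linearized constraint strictly inactive. The same point violates the quadratic constraint, so the SQCQP constraint is active.

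Strict complementarity is the ingredient that would repair this. With it, the SQCQP active set is eventually $\mathcal A^\star$ along a convergent subsequence. You then solve the SQP equality system on $\mathcal A^\star$ and verify the full SQP KKT conditions, using that $\widehat\mu^k_{\mathcal A^\star}$ stays bounded away from zero and the other constraints stay strictly inactive. Uniqueness for the strongly convex QP then identifies that solution with $z_{\mathrm{SQP}}^k$.

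You might instead try to avoid active sets by applying strong regularity to the full SQP generalized equation. But that requires first knowing $z_{\mathrm{SQP}}^k$ lies in the neighborhood of the limiting KKT triple where the local inverse is single-valued, which is already the conclusion.

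\textbf{Fallback: essentially the paper's proof.} Your fallback matches the paper's argument and should be your main proof. The paper argues as follows:
\begin{enumerate}
\item Assume for contradiction that $\|d_{\mathrm{SQP}}^{k_j}\|_2\ge\varepsilon_d$ along a subsequence.
\item Use Assumption~\ref{ass:4.4} and Lemma~\ref{lem:4.6} to extract $(x^{k_j},\lambda^{k_j},\mu^{k_j})\to(x^\star,\lambda^\star,\mu^\star)$, a KKT triple by Theorem~\ref{thm:4.3}.
\item Use $g^{k_j}\to\nabla F(x^\star)$ from Corollary~\ref{cor:4.2}.
\item Note that $d=0$ solves the limiting SQP KKT system.
\item Conclude $d_{\mathrm{SQP}}^{k_j}\to0$ from the Lipschitz dependence of the SQP solution on its data under strong regularity.
\end{enumerate}

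Write yours in this subsequence form rather than ``the data converge to those at a KKT point'', since accumulation points need not be unique. Also pass to a subsequence on which $B^{k_j}$ converges, which Assumption~\ref{ass:4.3} allows.

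Since the limiting QP is strongly convex and $x^\star$ is a KKT point, $d=0$ is its unique minimizer. LICQ at $x^\star$ gives feasible points of the linearized constraints at $x^{k_j}$ that tend to $0$. Comparing objective values, the uniformly strongly convex SQP solutions are bounded, and any limit point is a minimizer of the limiting QP, hence $0$. So strong regularity is more than this step needs.

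Where your primary route is valid, its only gain is the quantitative bound $\|d_{\mathrm{SQP}}^k\|_2\le(1+C')\|d^k\|_2$. That would transfer any rate for $d^k$ to the probe and hence to $\eta_k$; the paper's argument is qualitative only.
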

\begin{proof}
Consider an almost-sure sample path on which the conclusions of Theorem~\ref{thm:4.3} and Corollary~\ref{cor:4.2} hold.In particular,
\[\left\|R_{\mathrm{KKT}}(x^k,\lambda^k,\mu^k)\right\|_2\longrightarrow0\]
and \[g^k-\nabla F(x^k)\longrightarrow0.\]
Suppose, to the contrary, that \(\|d_{\mathrm{SQP}}^k\|_2\not\to0\). Then there exist \(\varepsilon_d>0\) and an infinite subsequence \(\{k_j\}\) such that
\[\|d_{\mathrm{SQP}}^{k_j}\|_2\geq\varepsilon_d.\]
By Assumption~\ref{ass:4.4}, the primal sequence lies in the compact set \(\mathcal X\). By Lemma~\ref{lem:4.6}, the multiplier sequences are bounded. Hence, after passing to a further subsequence if necessary,
\[x^{k_j}\longrightarrow x^\star,\qquad \lambda^{k_j}\longrightarrow\lambda^\star,
\qquad \mu^{k_j}\longrightarrow\mu^\star.\]
Theorem~\ref{thm:4.3} then implies that \((x^\star,\lambda^\star,\mu^\star)\) is a KKT point.

Moreover,\[g^{k_j}=\nabla F(x^{k_j})+e^{k_j}\longrightarrow\nabla F(x^\star).\]
Since there are only finitely many possible active sets, a further subsequence may be chosen on which the locally identified active set is fixed. Assumption~\ref{ass:4.5} gives strong regularity of the corresponding local SQP KKT system with a uniformly bounded inverse-Lipschitz modulus.

At the limiting KKT point, the SQP subproblem admits the zero direction
as the local solution. Indeed, $h(x^\star)=0,\ c_{\mathcal A^\star}(x^\star)=0,$
and the KKT stationarity condition gives
\[\nabla F(x^\star)+J_h(x^\star)^\top\lambda^\star+J_{c_{\mathcal A^\star}}(x^\star)^\top\mu_{\mathcal A^\star}^\star=0.\]
Thus $d_{\mathrm{SQP}}^\star=0$ solves the limiting SQP KKT system.

By strong regularity, the locally relevant SQP primal--dual solution
depends Lipschitz continuously on the local problem data. Since those
data converge to their limiting KKT values along the selected
subsequence, $d_{\mathrm{SQP}}^{k_j}\longrightarrow d_{\mathrm{SQP}}^\star=0.$

This contradicts $\|d_{\mathrm{SQP}}^{k_j}\|_2\geq\varepsilon_d.$ Therefore
\[\|d_{\mathrm{SQP}}^k\|_2\longrightarrow0\qquad\text{a.s.}\]
\end{proof}
\begin{lemma}[Vanishing RCMCI]\label{lem:4.13}
Under the conditions of Lemma~\ref{lem:4.12},
\begin{equation}\label{eq:vanishing_raw_rcmci}
\eta_k\longrightarrow0 \qquad\text{a.s.} \end{equation}
and\begin{equation}\label{eq:vanishing_smoothed_rcmci}
\bar{\eta}_k\longrightarrow0 \qquad\text{a.s.} \end{equation}\end{lemma}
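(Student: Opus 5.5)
Fix a sample path on which the conclusion of Lemma~\ref{lem:4.12} holds, so that $\|d_{\mathrm{SQP}}^k\|_2\to0$. The argument is then deterministic along this path: first bound each normalized ratio $\rho_{h_i,k}$ and $\rho_{c_j,k}$ by a multiple of $\|d_{\mathrm{SQP}}^k\|_2^2$, then pass the resulting bound through the exponential smoothing recursion \eqref{eq:method_smoothed_rcmci}.

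\textbf{Numerator.} Assumption~\ref{ass:4.4} places $x^k\in\mathcal X\subset\mathcal N$, so the Hessian bounds of Assumption~\ref{ass:4.1} apply and give
\[
|q_{h_i,k}|\leq\tfrac12 M_h\|d_{\mathrm{SQP}}^k\|_2^2,\qquad
|q_{c_j,k}|\leq\tfrac12 M_c\|d_{\mathrm{SQP}}^k\|_2^2 .
\]

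\textbf{Denominator.} By definition \eqref{eq:method_s_hi}--\eqref{eq:method_s_cj}, every scale satisfies $s_{h_i,k}\geq\epsilon_s$ and $s_{c_j,k}\geq\epsilon_s$. I use only this crude floor. Alternatively one could use the directional term $\gamma_d\|\nabla c_j\|\,\|d\|$, which would give linear decay of order $\|d\|$ under nondegenerate gradients, but the constant floor $\epsilon_s$ already yields quadratic decay with no regularity needed. This gives
\[
\eta_k\leq\frac{\max\{M_h,M_c\}}{2\epsilon_s}\,\|d_{\mathrm{SQP}}^k\|_2^2\longrightarrow0 .
\]
Since $\eta_k\geq0$, this proves \eqref{eq:vanishing_raw_rcmci} on the chosen path, and hence almost surely.

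\textbf{Smoothing.} Unrolling the recursion with $\bar\eta_{-1}=0$ gives
\[
\bar\eta_k=(1-\beta_\eta)\sum_{t=0}^{k}\beta_\eta^{\,k-t}\eta_t .
\]
This is a Toeplitz average with nonnegative weights summing to at most one, each of which tends to zero for fixed $t$ (using $\beta_\eta<1$). The standard split then applies. Given $\varepsilon>0$, choose $T$ with $\eta_t\leq\varepsilon$ for all $t\geq T$. The tail contributes at most $\varepsilon$. The head contributes at most $\beta_\eta^{\,k-T+1}\max_{t<T}\eta_t$ up to a constant, which tends to zero as $k\to\infty$. The case $\beta_\eta=0$ is trivial, since then $\bar\eta_k=\eta_k$. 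This proves \eqref{eq:vanishing_smoothed_rcmci}.

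\textbf{Main obstacle.} The only delicate point is justifying that the Hessian bounds hold at every $x^k$ for all $k$, rather than only on a tail. The head terms are finite regardless, because $\eta_t<\infty$ for each $t$ (each $s_{\cdot,t}\geq\epsilon_s>0$ and $d_{\mathrm{SQP}}^t$ is finite). So only tail bounds are needed, and the almost-sure quantifier is inherited directly from Lemma~\ref{lem:4.12}.
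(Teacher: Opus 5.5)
Your proof is correct and follows essentially the same route as the paper. You bound each $|q_{\cdot,k}|$ by $\tfrac12\max\{M_h,M_c\}\|d_{\mathrm{SQP}}^k\|_2^2$ using the Hessian bounds, use the floor $\epsilon_s$ in every scale to get $\eta_k\le \frac{\max\{M_h,M_c\}}{2\epsilon_s}\|d_{\mathrm{SQP}}^k\|_2^2$, invoke Lemma~\ref{lem:4.12}, and pass the limit through the geometric smoothing with a head/tail split; the only difference, which is cosmetic, is that you unroll the recursion from $\bar\eta_{-1}=0$ while the paper iterates from an arbitrary index $K$.
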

\begin{proof}
By Assumption~\ref{ass:4.1}, the constraint Hessians are uniformly bounded. Hence, for every equality and inequality constraint, \[|q_{h_i,k}|\leq\frac{M_h}{2}\|d_{\mathrm{SQP}}^k\|_2^2,\qquad |q_{c_j,k}| \leq \frac{M_c}{2}\|d_{\mathrm{SQP}}^k\|_2^2.\]
Since every RCMCI normalization scale is bounded below by \(\epsilon_s>0\), it follows that
\[0\leq\eta_k\leq\frac{M_{\mathrm{curv}}}{2\epsilon_s}\|d_{\mathrm{SQP}}^k\|_2^2,
\qquad M_{\mathrm{curv}}:=\max\{M_h,M_c\}.\]
Lemma~\ref{lem:4.12} therefore implies $\eta_k\longrightarrow0\qquad\text{a.s.}$

For the smoothed indicator,\[\bar\eta_k=\beta_\eta\bar\eta_{k-1} +(1-\beta_\eta)\eta_k,\qquad 0\leq\beta_\eta<1.\]
Iterating from any \(K\) gives \[\bar\eta_k=\beta_\eta^{\,k-K+1}\bar\eta_{K-1}+(1-\beta_\eta)\sum_{\ell=K}^{k} \beta_\eta^{\,k-\ell}\eta_\ell.\]
Since \(\eta_\ell\to0\) almost surely and the geometric weights have
total mass at most one, the second term tends to zero, while the first
vanishes because \(\beta_\eta<1\). Hence $\bar\eta_k\longrightarrow0\qquad\text{a.s.}$
\end{proof}
\begin{theorem}[Eventual SQP selection]\label{thm:4.4}
Under the conditions of Theorem~\ref{thm:4.3} and Lemma~\ref{lem:4.12}, there exists a finite random index \(K_{\mathrm{switch}}\) such that
\begin{equation}\label{eq:eventual_sqp_selection}
\mathcal M_k=\mathrm{SQP},\qquad k\geq K_{\mathrm{switch}},\qquad \text{a.s.}
\end{equation}
Consequently, the effective direction passed to globalization is also the SQP direction for every sufficiently large iteration.
\end{theorem}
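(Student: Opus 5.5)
The argument works pathwise on the probability-one event $\Omega_0$ where the conclusion of Lemma~\ref{lem:4.13} holds, i.e. $\bar\eta_k\to0$. On this event the switching rule \eqref{eq:method_switching_rule} does the rest. Because $\eta_{\mathrm{low}}>0$ is fixed, convergence $\bar\eta_k\to0$ yields a finite (path-dependent) index $K_{\mathrm{switch}}(\omega)$ with
\[
\bar\eta_k\le\eta_{\mathrm{low}},\qquad k\ge K_{\mathrm{switch}}.
\]
For every such $k$ the first branch of \eqref{eq:method_switching_rule} applies, so $\mathcal M_k=\mathrm{SQP}$ regardless of $\mathcal M_{k-1}$. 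The hysteresis branch, which would otherwise carry memory of an earlier SQCQP selection, is therefore never used after $K_{\mathrm{switch}}$. Since $\Omega_0$ has probability one, this proves \eqref{eq:eventual_sqp_selection}. Measurability of $K_{\mathrm{switch}}$ as the first time after which $\bar\eta_k$ stays below $\eta_{\mathrm{low}}$ is routine, because each $\bar\eta_k$ is $\mathcal F_k$-measurable.

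For the consequence about the effective direction, I would inspect Algorithm~\ref{alg:avr_sqcp}. When $\mathcal M_k=\mathrm{SQP}$, the candidate is $\widetilde d^k=d_{\mathrm{SQP}}^k$ by \eqref{eq:method_selected_direction}. The only globalization safeguard that replaces directions acts on SQCQP candidates, and it replaces them by the SQP probe. The penalty loop changes $\rho_k$ but never the direction. Hence for $k\ge K_{\mathrm{switch}}$ we get $d^k=d_{\mathrm{SQP}}^k$, together with the SQP multipliers.

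The only point needing care is that the hypotheses really place us in the setting of Lemma~\ref{lem:4.13}, i.e. Lemma~\ref{lem:4.12} holds almost surely. I would point out that this is assumed in the statement, so the main obstacle is already absorbed upstream. The proof should note explicitly that intersecting the finitely many almost-sure events used (Theorem~\ref{thm:4.3}, Corollary~\ref{cor:4.2}, Lemmas~\ref{lem:4.12}--\ref{lem:4.13}) still gives a full-measure event. On that event the deterministic argument above applies.
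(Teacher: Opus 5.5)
Your proposal is correct and follows the paper's proof essentially verbatim: from $\bar\eta_k\to0$ a.s.\ (Lemma~\ref{lem:4.13}) and $\eta_{\mathrm{low}}>0$ you get a finite path-dependent index after which the first branch of \eqref{eq:method_switching_rule} forces $\mathcal M_k=\mathrm{SQP}$, and the globalization safeguard only ever replaces an SQCQP candidate by the SQP probe, never the reverse. Your extra remarks on intersecting the almost-sure events and on measurability of $K_{\mathrm{switch}}$ are harmless refinements that the paper leaves implicit.
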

\begin{proof}
By Lemma~\ref{lem:4.13}, $\bar{\eta}_k\longrightarrow0\qquad\text{a.s.} \ $
and the lower hysteresis threshold satisfies $\eta_{\mathrm{low}}>0.$
Therefore, on every sample path for which \(\bar{\eta}_k\to0\), there exists a finite index \(K_{\mathrm{switch}}\) such that $\bar{\eta}_k\leq\eta_{\mathrm{low}},\ k\geq K_{\mathrm{switch}}.$
By the RCMCI switching rule, this implies \[\mathcal M_k=\mathrm{SQP}\]
for every \(k\geq K_{\mathrm{switch}}\).

Since the globalization safeguard can replace an SQCQP candidate by
the SQP probe but does not replace an SQP preference by an SQCQP
direction, eventual SQP preference also implies eventual SQP effective
mode.
\end{proof}

\begin{corollary}[Vanishing asymptotic SQCQP frequency]\label{cor:4.3}
For an infinite generated sequence, define
$N_{\mathrm{SQCQP}}(K)=\#\left\{0\leq k<K:\mathcal M_k=\mathrm{SQCQP}\right\}.$
Under the conditions of Theorem~\ref{thm:4.4},
\begin{equation}\label{eq:sqcqp_frequency_zero}
\lim_{K\to\infty}\frac{N_{\mathrm{SQCQP}}(K)}{K}=0 \qquad\text{a.s.}\end{equation}
\end{corollary}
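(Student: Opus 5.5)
The corollary is a pathwise consequence of Theorem~\ref{thm:4.4}, so I will work on a single sample path. Fix the almost-sure event $\Omega_0$ on which the conclusion of Theorem~\ref{thm:4.4} holds, and fix $\omega\in\Omega_0$. On this path there is a finite index $K_{\mathrm{switch}}=K_{\mathrm{switch}}(\omega)$ such that $\mathcal M_k=\mathrm{SQP}$ for all $k\geq K_{\mathrm{switch}}$. Every index $k$ with $\mathcal M_k=\mathrm{SQCQP}$ must therefore satisfy $k<K_{\mathrm{switch}}$. Hence, for every $K$,
\[
0\leq N_{\mathrm{SQCQP}}(K)\leq \min\{K,K_{\mathrm{switch}}\}\leq K_{\mathrm{switch}},
\]
and $N_{\mathrm{SQCQP}}(K)$ is nondecreasing in $K$ and eventually constant.

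Dividing by $K$ gives
\[
0\leq \frac{N_{\mathrm{SQCQP}}(K)}{K}\leq\frac{K_{\mathrm{switch}}}{K}.
\]
Since $K_{\mathrm{switch}}$ is finite on this path, the right-hand side tends to zero as $K\to\infty$, and the squeeze gives the limit \eqref{eq:sqcqp_frequency_zero} for $\omega$. Because $\Omega_0$ has probability one, the limit holds almost surely.

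The main point needing care is measurability and the handling of the null set. I will note that $K_{\mathrm{switch}}$ is a random variable which may be unbounded across sample paths but is finite on $\Omega_0$; a pathwise bound is all the argument requires, and no integrability of $K_{\mathrm{switch}}$ is needed. The event $\{N_{\mathrm{SQCQP}}(K)/K\to0\}$ is measurable because each $\mathcal M_k$ is $\mathcal F_k$-measurable. This event contains $\Omega_0$, so its probability is one.

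Finally, I will remark that the corollary is stated only for an infinite generated sequence. If the algorithm terminates finitely through the KKT test, the frequency statement is vacuous, so the argument covers only the nonterminating case.
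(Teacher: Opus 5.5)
Your proposal is correct and follows essentially the same argument as the paper: fix a sample path on which Theorem~\ref{thm:4.4} holds, bound $N_{\mathrm{SQCQP}}(K)\leq K_{\mathrm{switch}}$, and squeeze $N_{\mathrm{SQCQP}}(K)/K$ between $0$ and $K_{\mathrm{switch}}/K\to0$. Your added remarks on the null set, measurability, and the nonterminating case are harmless refinements of the same proof.
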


\begin{proof}
By Theorem~\ref{thm:4.4}, there exists a finite random index 
\(K_{\mathrm{switch}}\) after which the RCMCI always selects SQP. Hence
$N_{\mathrm{SQCQP}}(K)\leq K_{\mathrm{switch}}$ for all sufficiently large \(K\). Therefore,\[0\leq\frac{N_{\mathrm{SQCQP}}(K)}{K}\leq\frac{K_{\mathrm{switch}}}{K}
\longrightarrow0\qquad\text{a.s.}\]
\end{proof}
\section{Numerical Experiments}\label{sec:numerical_experiments}
This section evaluates the proposed Adaptive Variance-Reduced Stochastic Sequential (Quadratic Constrained) Quadratic Programming (AVR-SS(QC)QP) framework on a constrained Sparse Principal Component Analysis (Sparse PCA) problem. The experiments assess objective quality, first-order KKT stationarity, constraint satisfaction, stochastic-gradient efficiency, and wall-clock performance. The
numerical behavior is also interpreted in relation to the SPIDER variance-reduction and adaptive SQP--SQCQP model-selection mechanisms.
\subsection{Experimental Setup and Protocol}
\label{subsec:experimental_setup}
The experiments use the \emph{Student Performance Factors} dataset, which contains \(N=6607\) observations. After preprocessing, seven numerical variables are retained and standardized to zero empirical mean and unit standard deviation. Let
\(X\in\mathbb{R}^{N\times7}\) denote the standardized data matrix and
\(\Sigma\) its empirical covariance matrix. The constrained Sparse PCA problem is
$\min_{x\in\mathbb{R}^{7}}\quad F(x)=-x^\top\Sigma x\qquad \text{s.t.}
\qquad \|x\|_2^2-1=0, \qquad \sum_{i=1}^{7}\sqrt{x_i^2+\delta^2}-\tau\leq0,$
where the sparsity budget and smoothing parameter are fixed at \(\tau=2\) and \(\delta=10^{-6}\), respectively. The equality constraint normalizes the loading vector, whereas the smooth \(\ell_1\)-type inequality promotes sparsity.

All methods are initialized from the common deterministic point \(x^0=e_1\), with zero initial multipliers whenever required. For AVR-SS(QC)QP, the initial curvature matrices are \(B^0=G^0=I\). The proposed method is compared with three stochastic constrained optimization baselines: AALM-LO, representing an adaptive
stochastic augmented-Lagrangian approach; StoSQP-CRZ, representing a stochastic SQP framework; and Constrained-Adam, representing an adaptive first-order constrained method. The baseline methods use a fixed mini-batch size of \(256\), whereas AVR-SS(QC)QP employs adaptive mini-batching.

The principal AVR-SS(QC)QP parameters are
$\rho_0=1,\ b_0=100,\ b_{\max}=N,\ \gamma_b=0.1,\ q=5,\ \eta_{\mathrm{low}}=0.01,\
\eta_{\mathrm{high}}=0.10,\ \beta_\eta=0.5,\ \gamma_d=1,\ \epsilon_s=10^{-8},\
\delta_B=\delta_G=0.2,\ \sigma=10^{-4},\ \beta=0.5,$ and $ \gamma_\rho=2.$
The numerical SQCQP tolerance is \(10^{-6}\), while the KKT termination tolerance is \(10^{-5}\). For the principal baseline configurations, AALM-LO uses initial penalty \(\beta_0=1\), penalty factors \(0.5\) and \(10\), and initial inner learning rate \(10^{-2}\). StoSQP-CRZ uses \(\tau_0=0.1\), \(\xi_0=1\), \(\sigma=0.1\), \(\eta=0.5\), and \(\theta=10^4\). Constrained-Adam uses
\(\beta_1=0.9\), \(\beta_2=0.999\), \(\epsilon_{\mathrm{Adam}}=10^{-8}\), and diagonal KKT scale \(10^3\).

Each stochastic method is evaluated using the five random seeds \(\{1,2,3,4,5\}\). Unless convergence occurs earlier, each run is terminated after \(300\) outer iterations. Convergence is declared when $\left\|R_{\mathrm{KKT}}(x^k,\lambda^k,\mu^k)\right\|_2\leq10^{-5}.$
The objective value \(F(x^k)=-(x^k)^\top\Sigma x^k\) measures the variance captured by the loading vector, with more negative values corresponding to larger
captured variance. Since objective value alone does not establish solution quality in a constrained problem, it is interpreted jointly with the aggregate constraint violation $V(x^k)=\|h(x^k)\|_1+\|c(x^k)^+\|_1$ and the exact KKT residual. Computational cost is measured using both wall-clock runtime and cumulative algorithmic sample-gradient evaluations. Exact objective and full-gradient calculations performed solely for diagnostic KKT evaluation are excluded from the stochastic-gradient oracle counts.

For AVR-SS(QC)QP, the implementation additionally records the raw and smoothed RCMCI values, selected SQP/SQCQP mode, effective globalization mode, mini-batch size, accepted step length, penalty parameter, and local curvature information. These quantities are used as internal diagnostics for verifying the model-selection and variance-reduction mechanisms, while the principal cross-method
comparison is based on the common metrics reported below.

All computations were implemented in Python~3.13 using NumPy, Pandas, SciPy, and Matplotlib and were performed on an Intel Core i5-8265U processor with \(8\) GB RAM. The solvers were run sequentially to avoid inter-solver CPU contention in the runtime measurements. For a scalar metric \(Y\), the reported mean is computed over the five independent runs, and the \(95\%\) confidence interval is based on the Student-\(t\) distribution with four degrees of freedom,
$\overline Y\pm t_{0.975,4}\frac{S_Y}{\sqrt{5}},$ where \(S_Y\) denotes the sample standard deviation.

\begin{figure*}[p]
\centering

\begin{minipage}[t]{0.48\textwidth}
\centering
\includegraphics[
    width=\linewidth,
    height=0.185\textheight,
    keepaspectratio
]{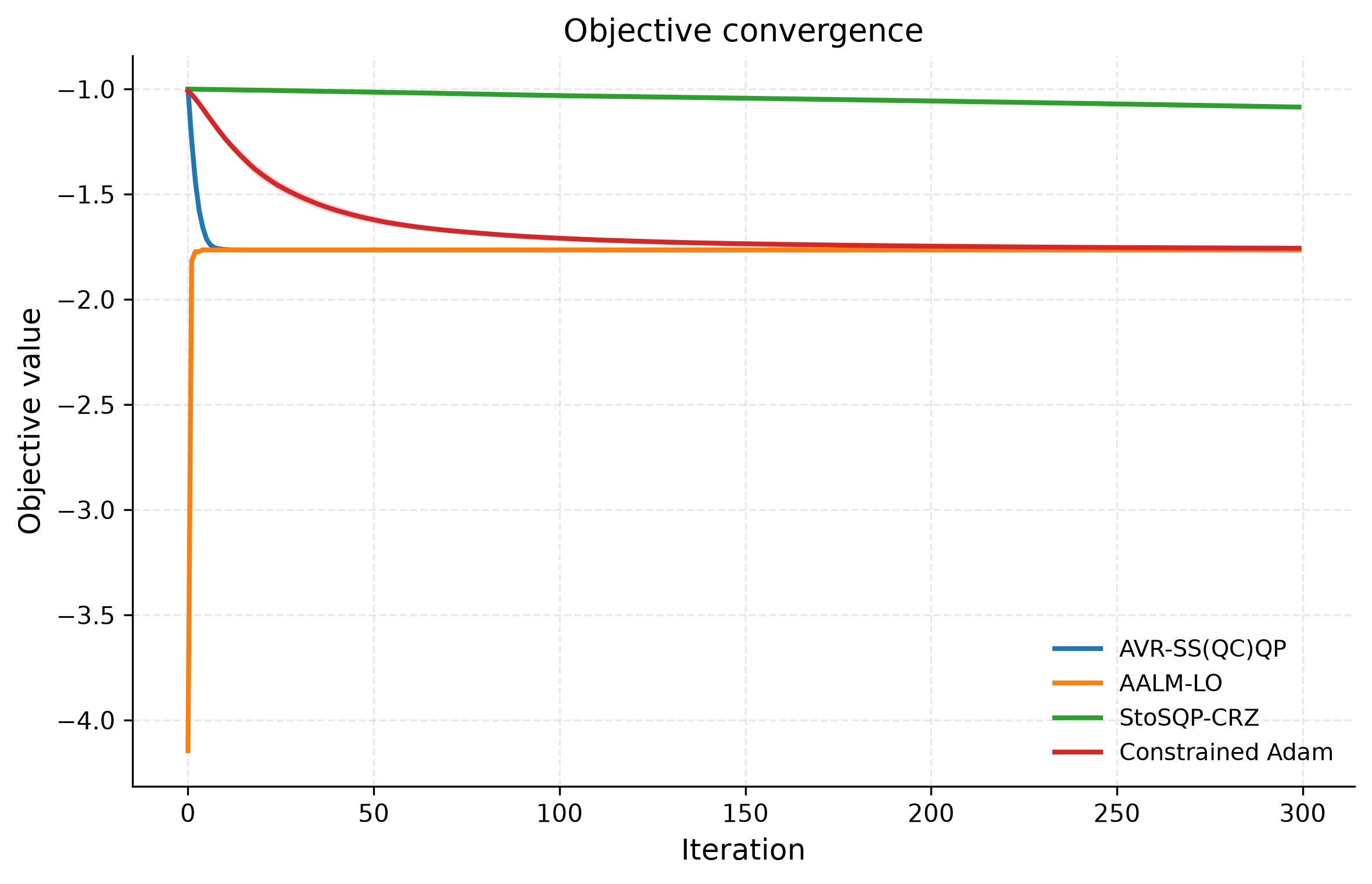}
\vspace{-2mm}

{\small (a) Objective value versus iteration}
\end{minipage}
\hfill
\begin{minipage}[t]{0.48\textwidth}
\centering
\includegraphics[
    width=\linewidth,
    height=0.185\textheight,
    keepaspectratio
]{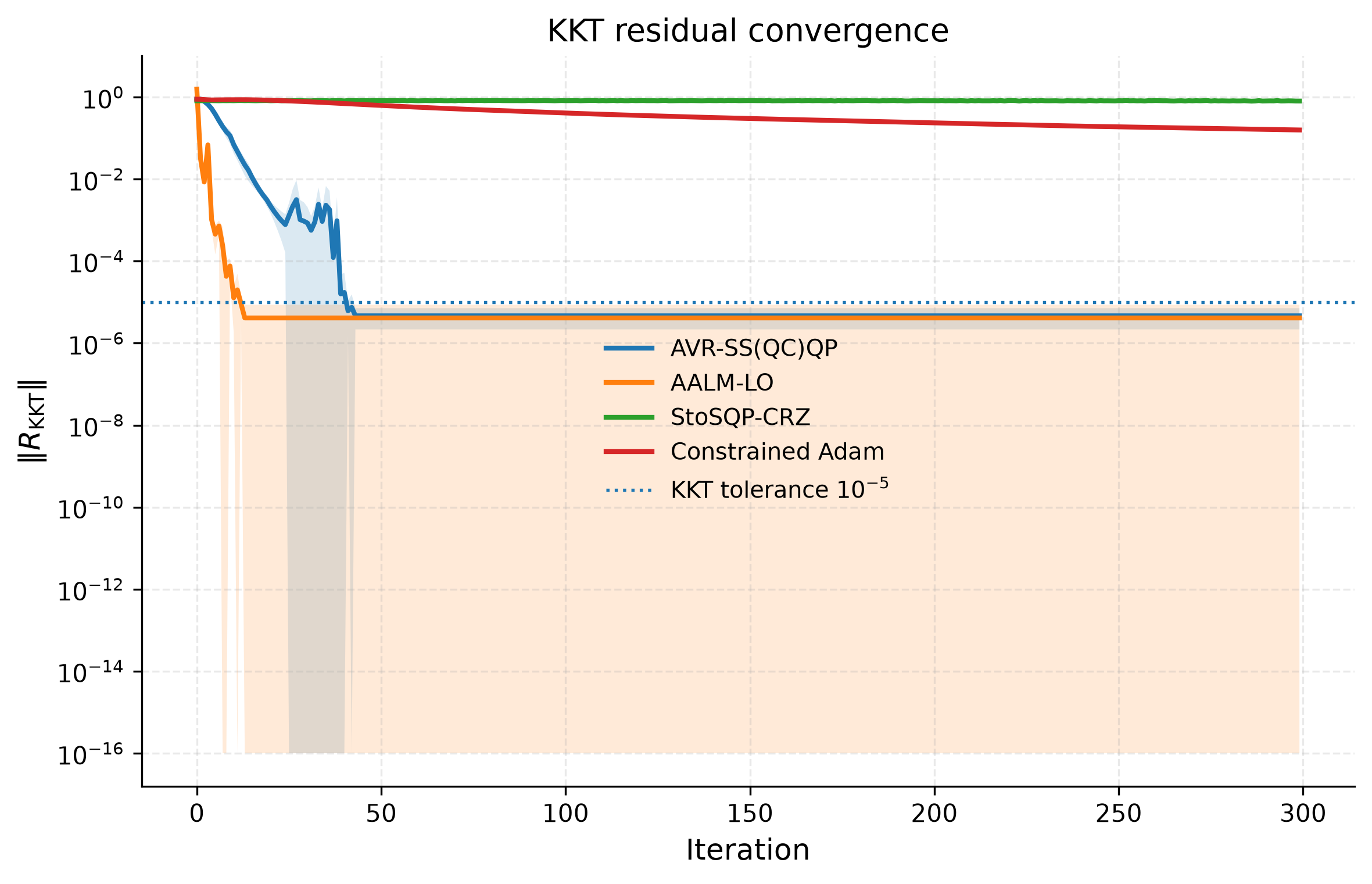}
\vspace{-2mm}

{\small (b) KKT residual versus iteration}
\end{minipage}

\vspace{1.5mm}

\begin{minipage}[t]{0.48\textwidth}
\centering
\includegraphics[
    width=\linewidth,
    height=0.185\textheight,
    keepaspectratio
]{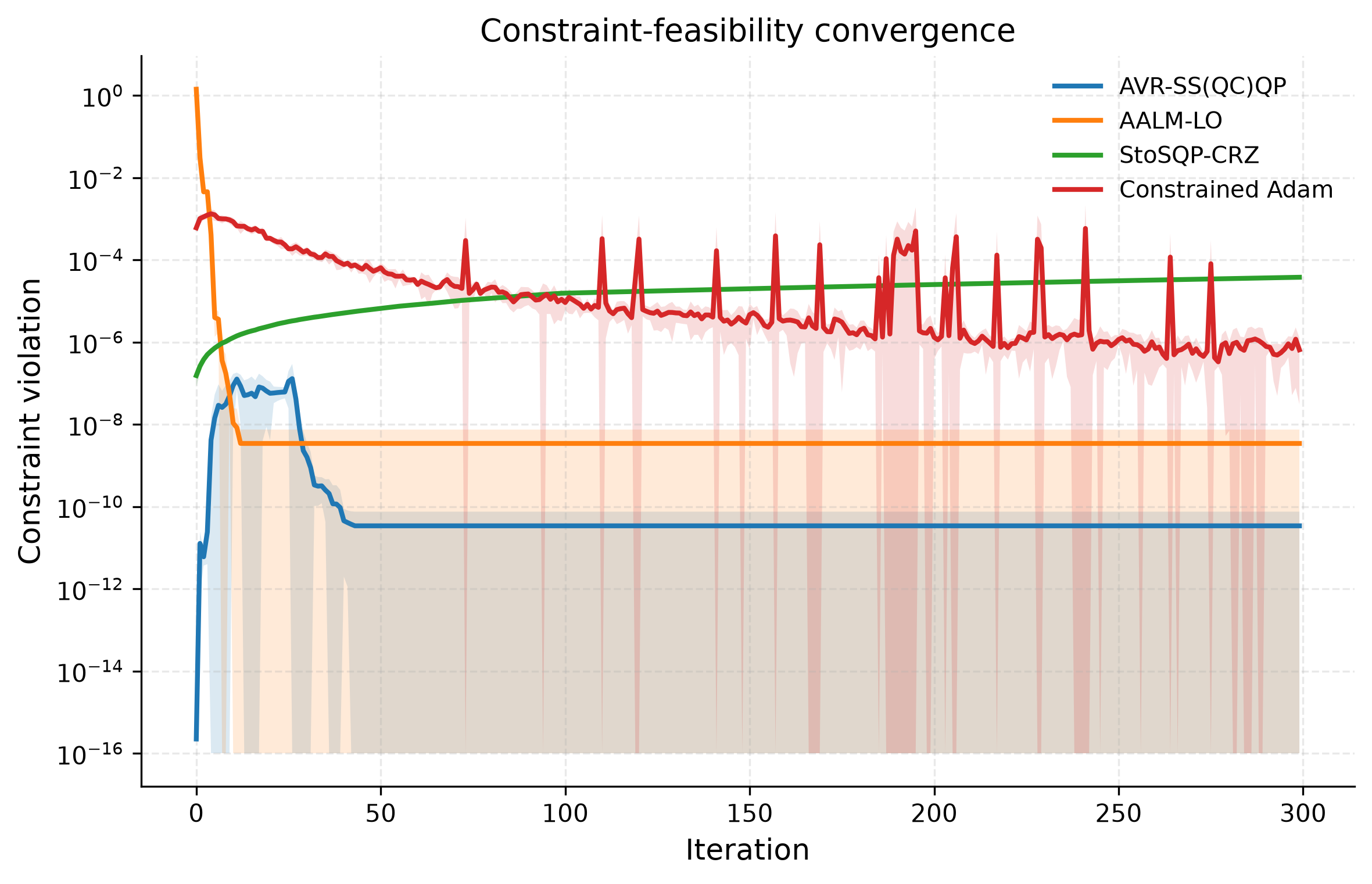}
\vspace{-2mm}

{\small (c) Constraint violation versus iteration}
\end{minipage}
\hfill
\begin{minipage}[t]{0.48\textwidth}
\centering
\includegraphics[
    width=\linewidth,
    height=0.185\textheight,
    keepaspectratio
]{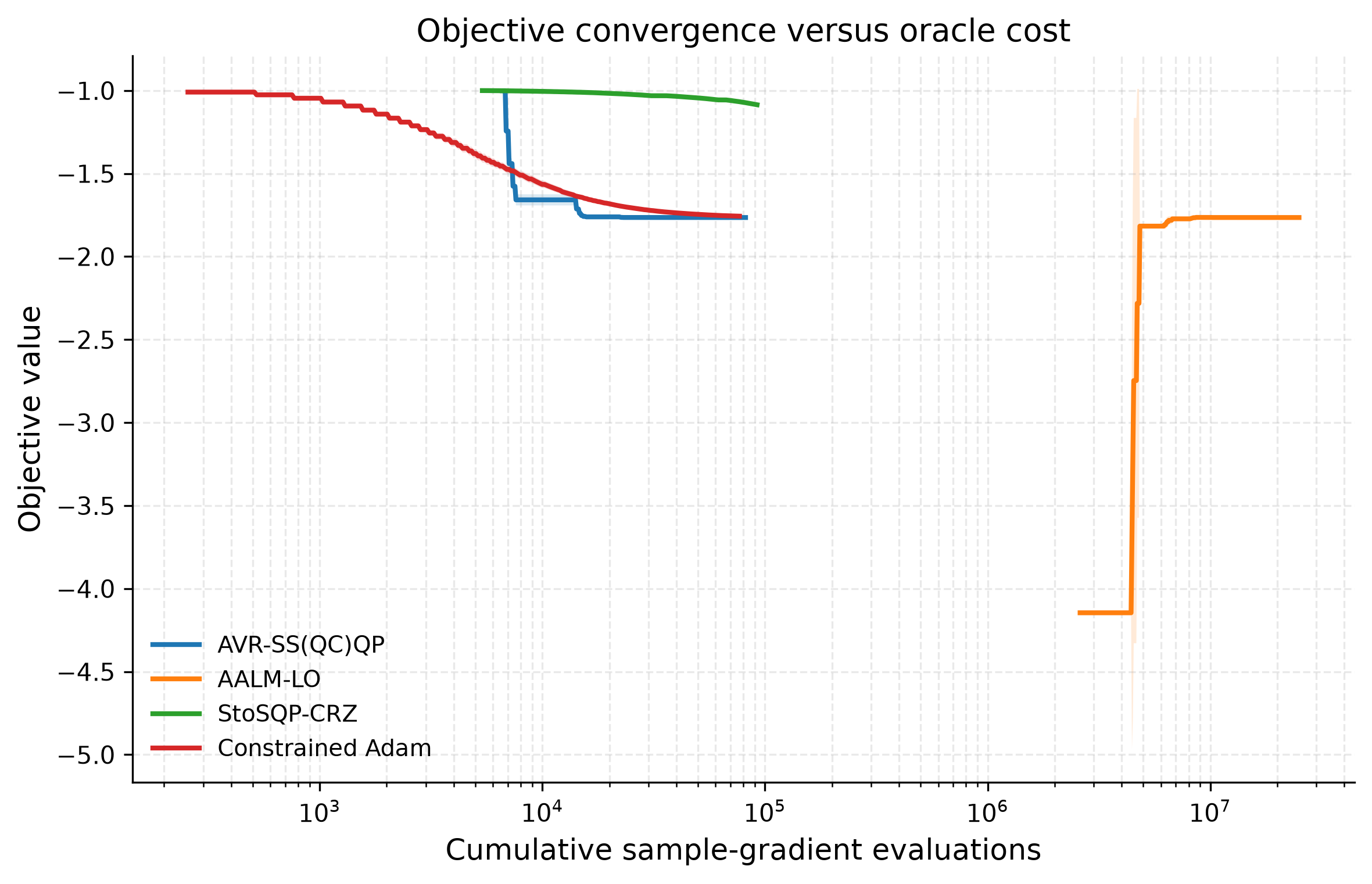}
\vspace{-2mm}

{\small (d) Objective value versus oracle evaluations}
\end{minipage}

\vspace{1.5mm}

\begin{minipage}[t]{0.48\textwidth}
\centering
\includegraphics[
    width=\linewidth,
    height=0.185\textheight,
    keepaspectratio
]{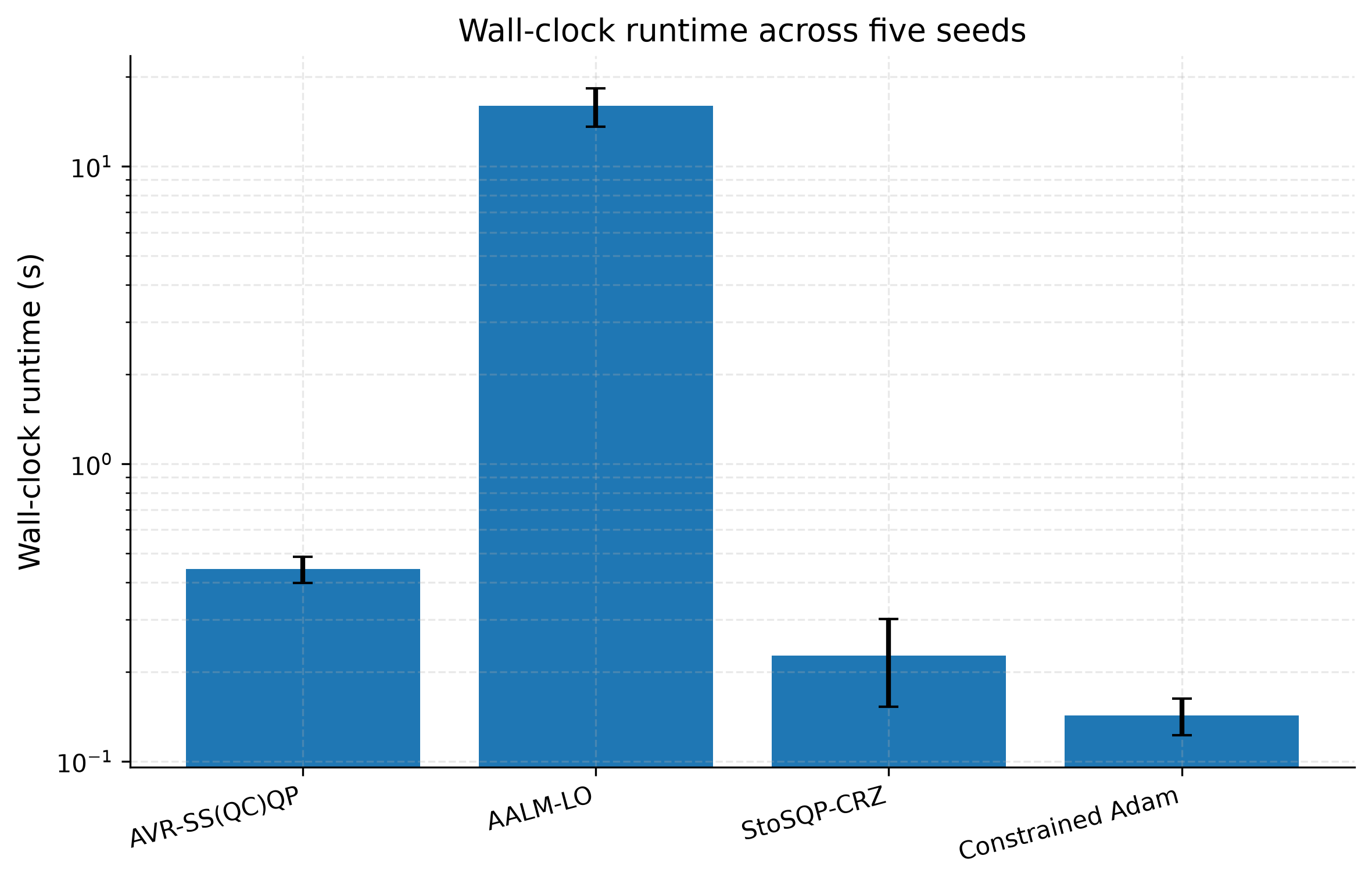}
\vspace{-2mm}

{\small (e) Wall-clock runtime across five seeds}
\end{minipage}
\hfill
\begin{minipage}[t]{0.48\textwidth}
\centering
\includegraphics[
    width=\linewidth,
    height=0.185\textheight,
    keepaspectratio
]{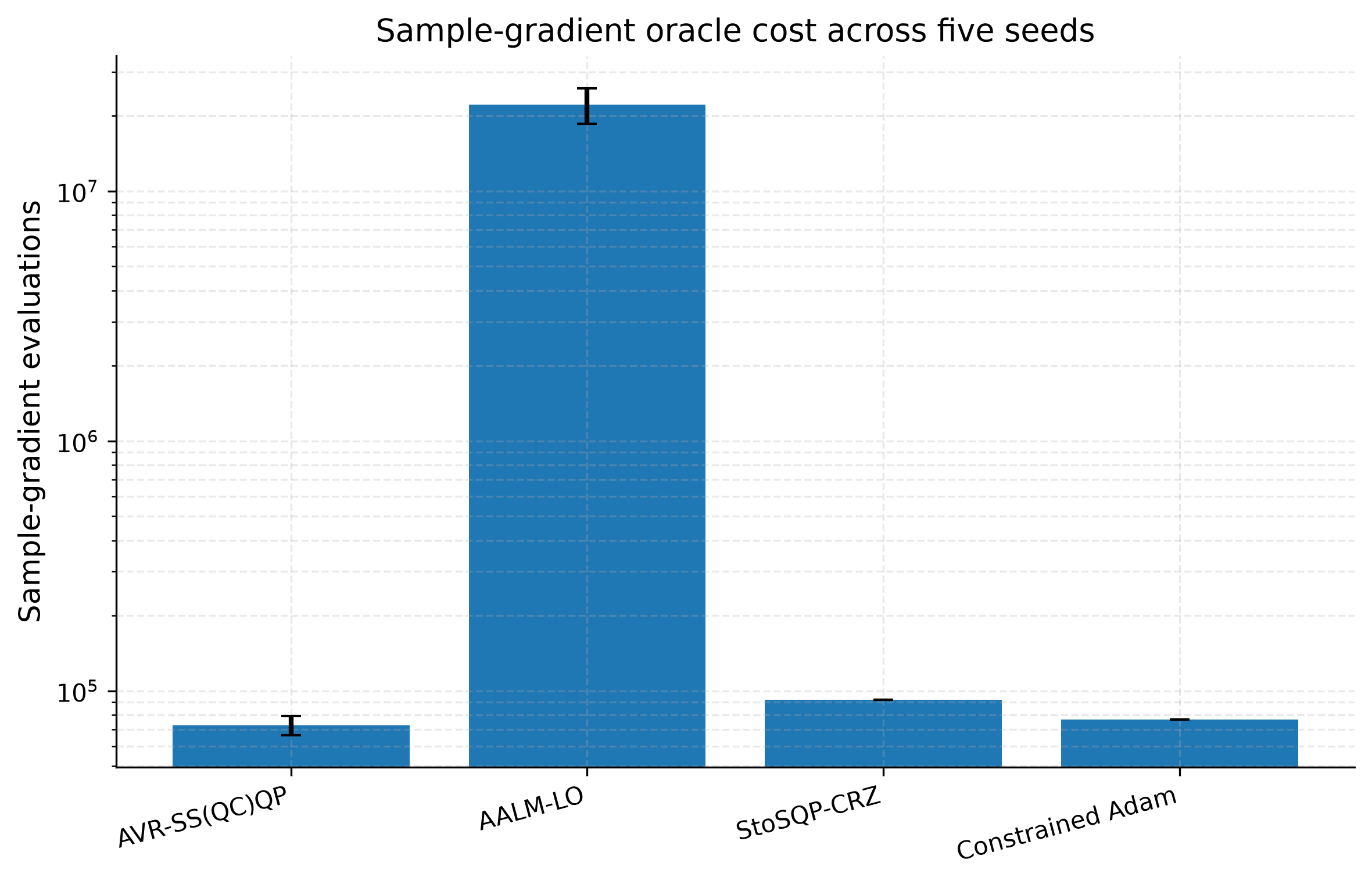}
\vspace{-2mm}

{\small (f) Total sample-gradient oracle cost}
\end{minipage}

\vspace{1.5mm}

\begin{minipage}[t]{0.48\textwidth}
\centering
\includegraphics[
    width=\linewidth,
    height=0.185\textheight,
    keepaspectratio
]{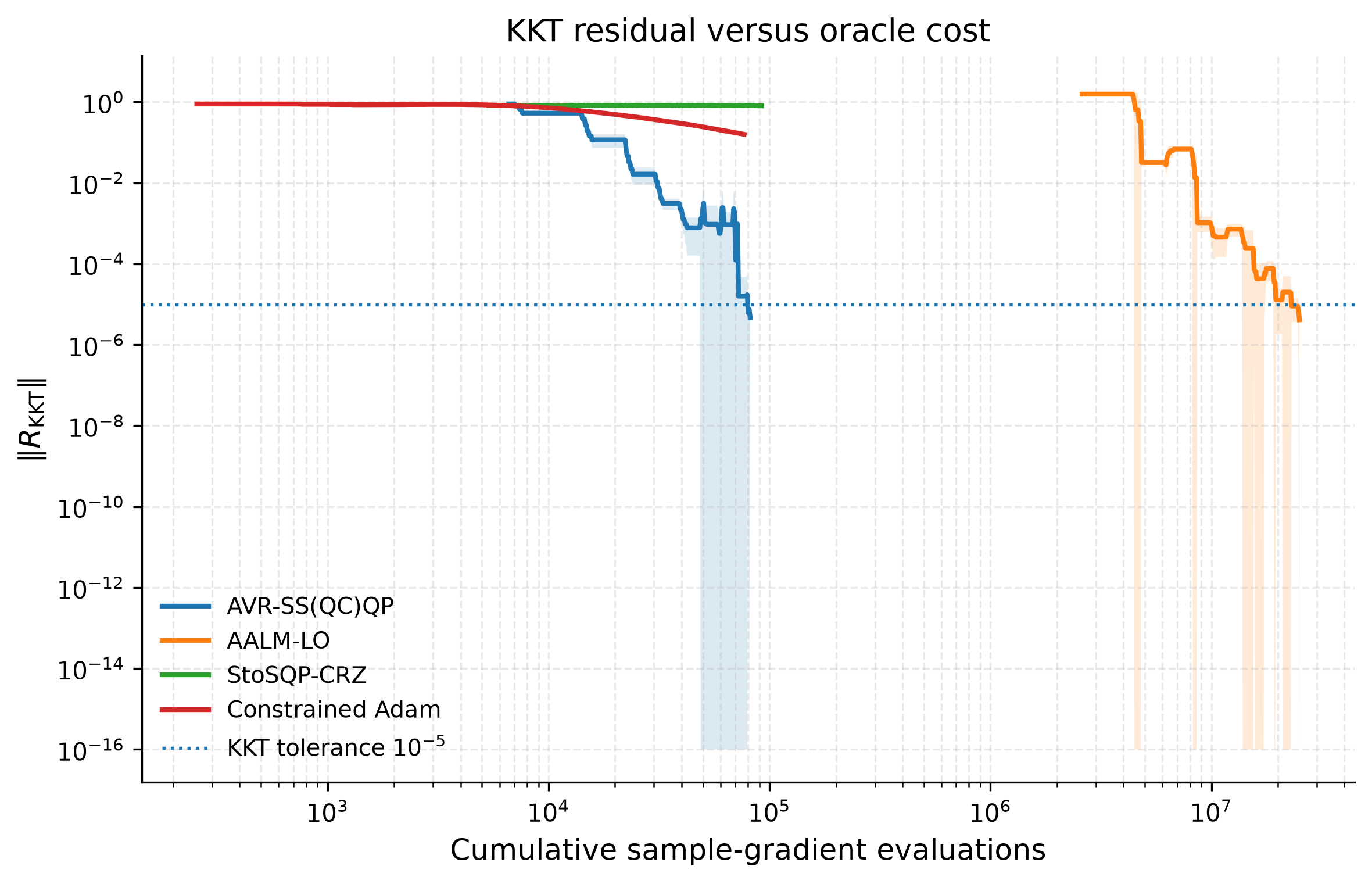}
\vspace{-2mm}

{\small (g) KKT residual versus oracle evaluations}
\end{minipage}
\hfill
\begin{minipage}[t]{0.48\textwidth}
\centering
\includegraphics[
    width=\linewidth,
    height=0.185\textheight,
    keepaspectratio
]{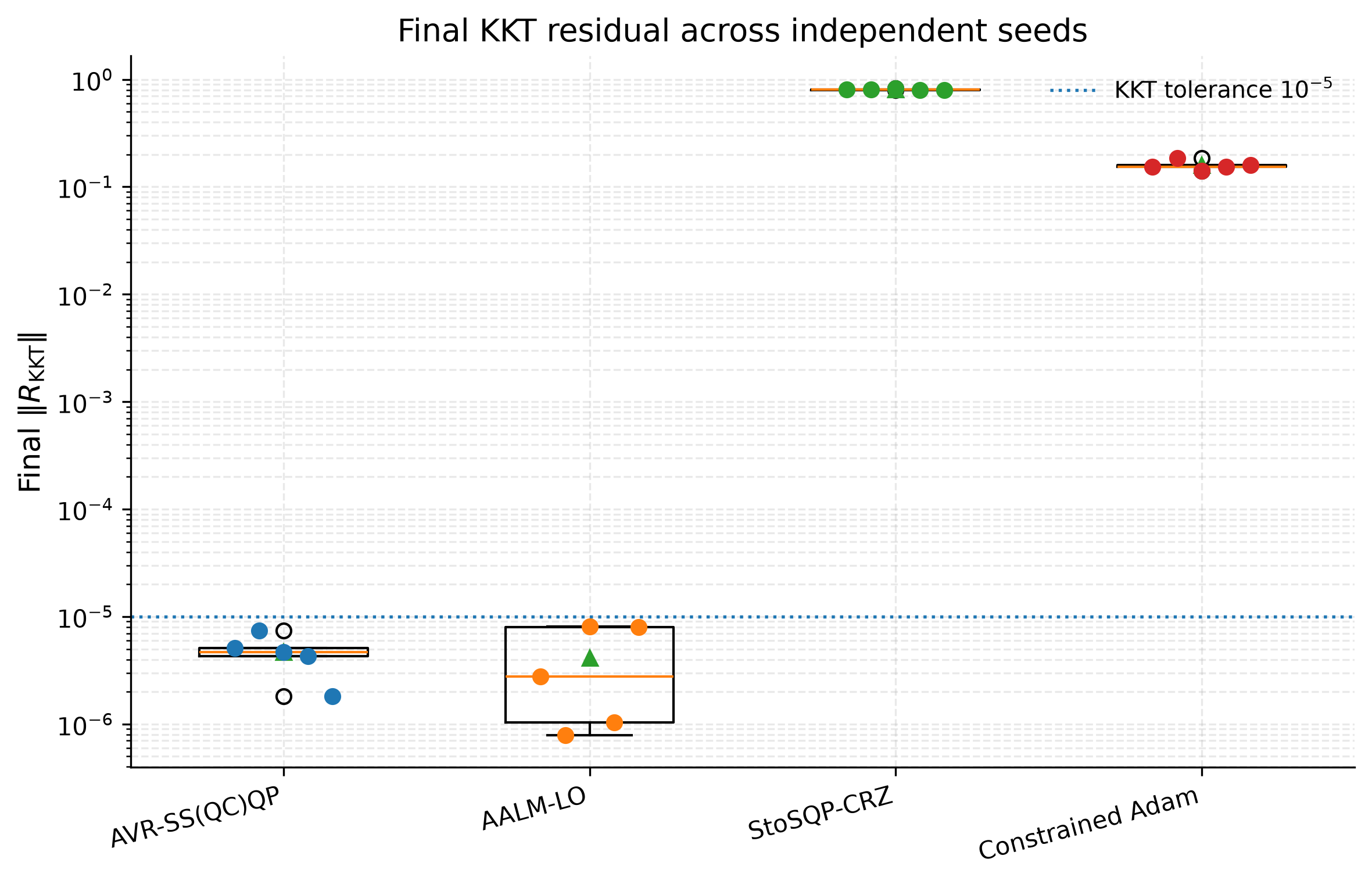}
\vspace{-2mm}

{\small (h) Final KKT residual across independent seeds}
\end{minipage}

\caption{Cross-method numerical comparison on the constrained Sparse
PCA problem. Panels (a)--(c) show objective convergence, KKT
stationarity, and constraint feasibility with respect to outer
iterations; panels (d) and (g) measure convergence with respect to
cumulative sample-gradient evaluations; panels (e)--(f) summarize
runtime and oracle cost; and panel (h) reports the distribution of
final KKT residuals across the five stochastic runs.}
\label{fig:benchmark_summary}
\end{figure*}
\begin{figure*}[p]
\centering

\begin{minipage}[t]{0.48\textwidth}
\centering
\includegraphics[
    width=\linewidth,
    height=0.185\textheight,
    keepaspectratio
]{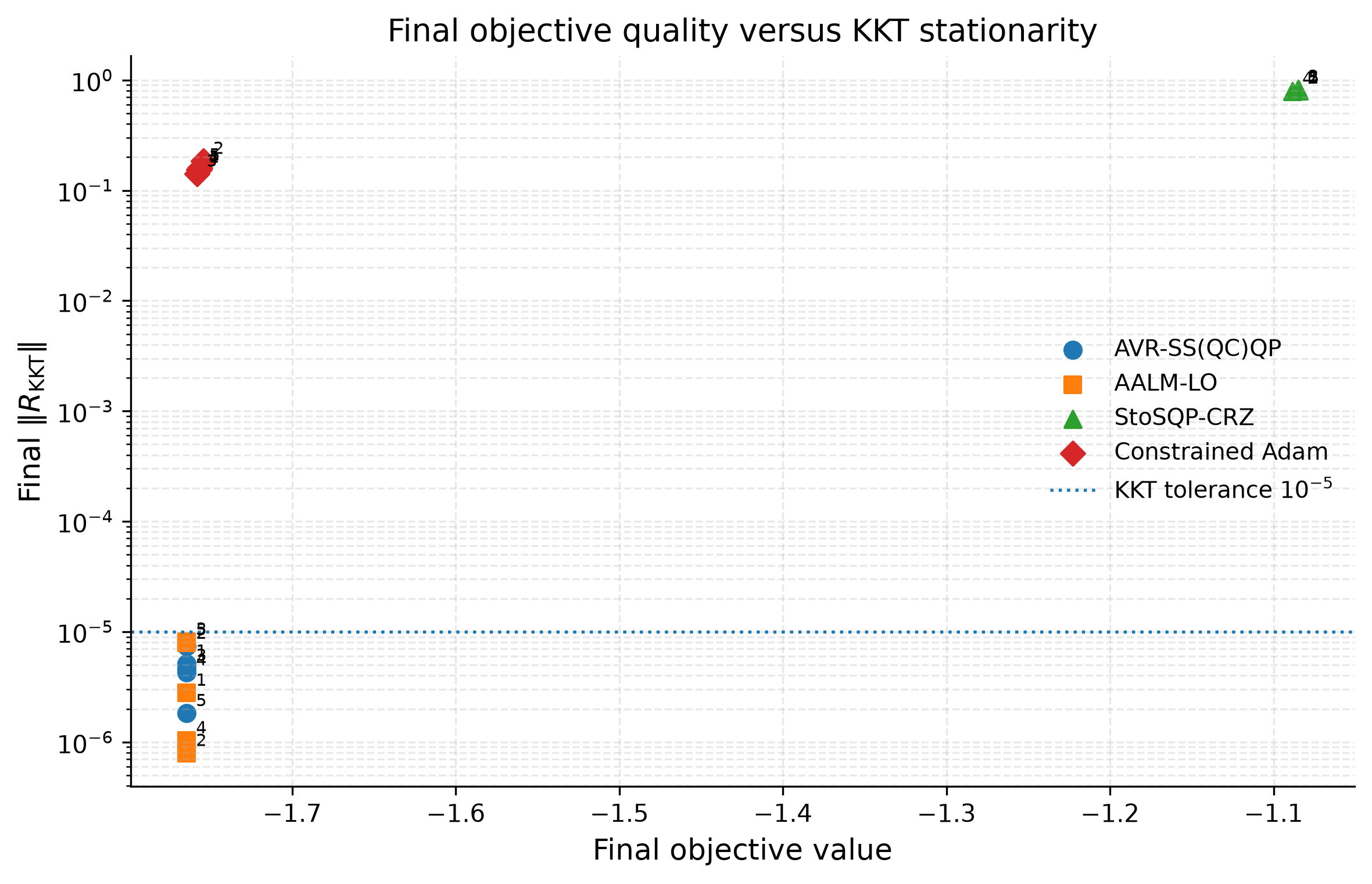}
\vspace{-2mm}

{\small (a) Final objective quality versus KKT stationarity}
\end{minipage}
\hfill
\begin{minipage}[t]{0.48\textwidth}
\centering
\includegraphics[
    width=\linewidth,
    height=0.185\textheight,
    keepaspectratio
]{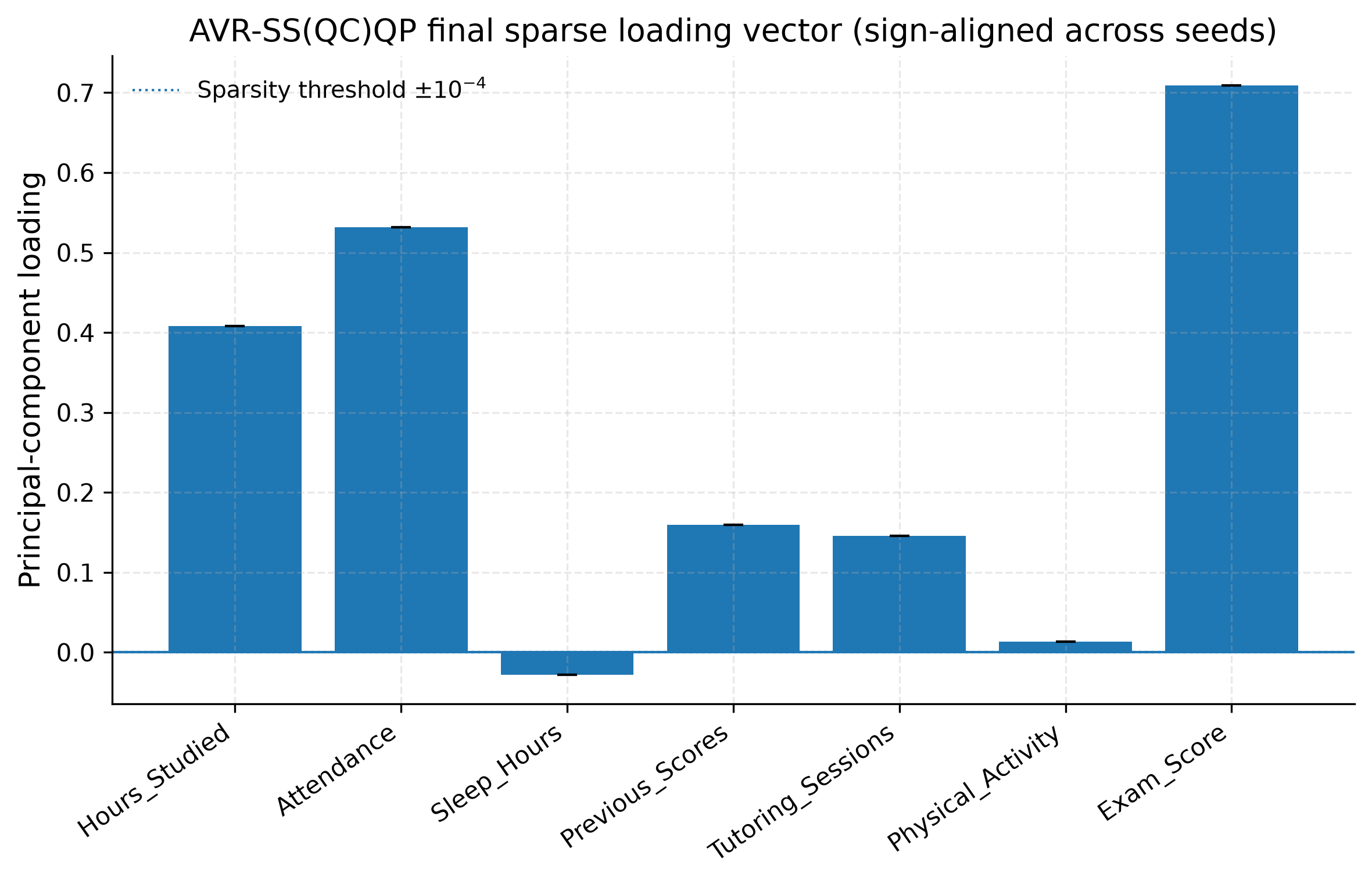}
\vspace{-2mm}

{\small (b) Final sparse principal-component loadings}
\end{minipage}

\vspace{1.5mm}

\begin{minipage}[t]{0.48\textwidth}
\centering
\includegraphics[
    width=\linewidth,
    height=0.185\textheight,
    keepaspectratio
]{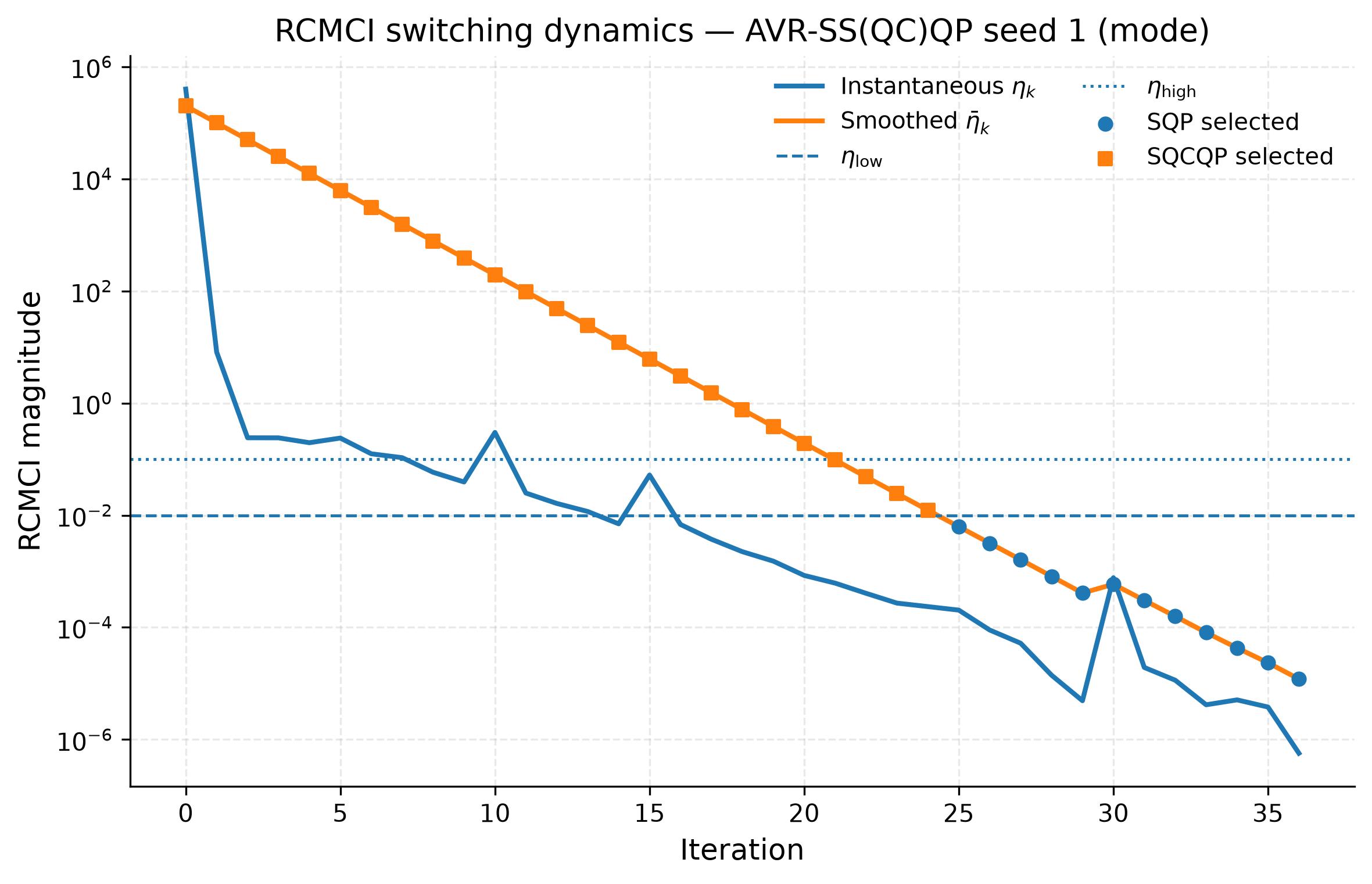}
\vspace{-2mm}

{\small (c) RCMCI switching dynamics}
\end{minipage}
\hfill
\begin{minipage}[t]{0.48\textwidth}
\centering
\includegraphics[
    width=\linewidth,
    height=0.185\textheight,
    keepaspectratio
]{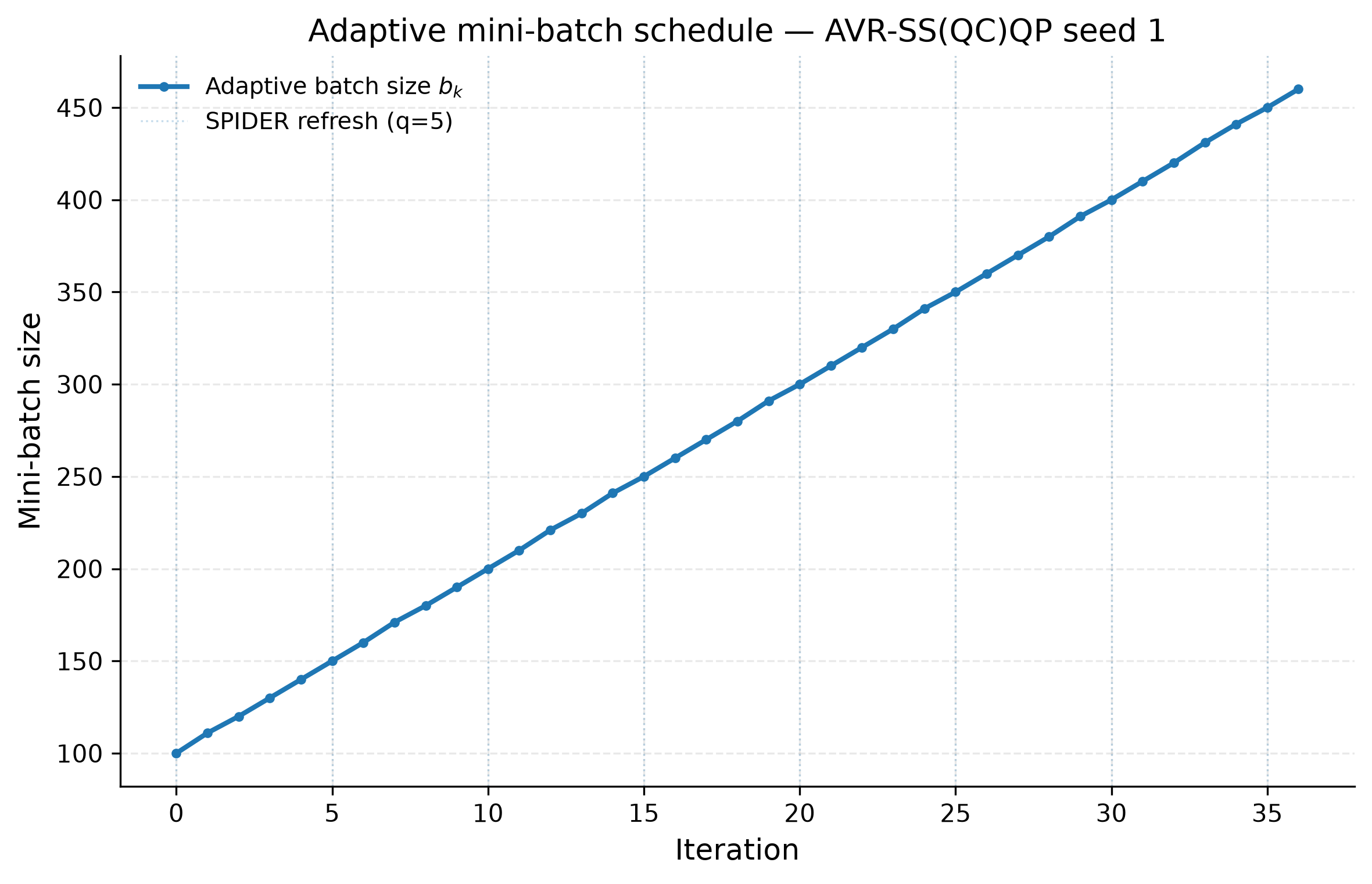}
\vspace{-2mm}

{\small (d) Adaptive mini-batch schedule}
\end{minipage}

\vspace{1.5mm}

\begin{minipage}[t]{0.48\textwidth}
\centering
\includegraphics[
    width=\linewidth,
    height=0.185\textheight,
    keepaspectratio
]{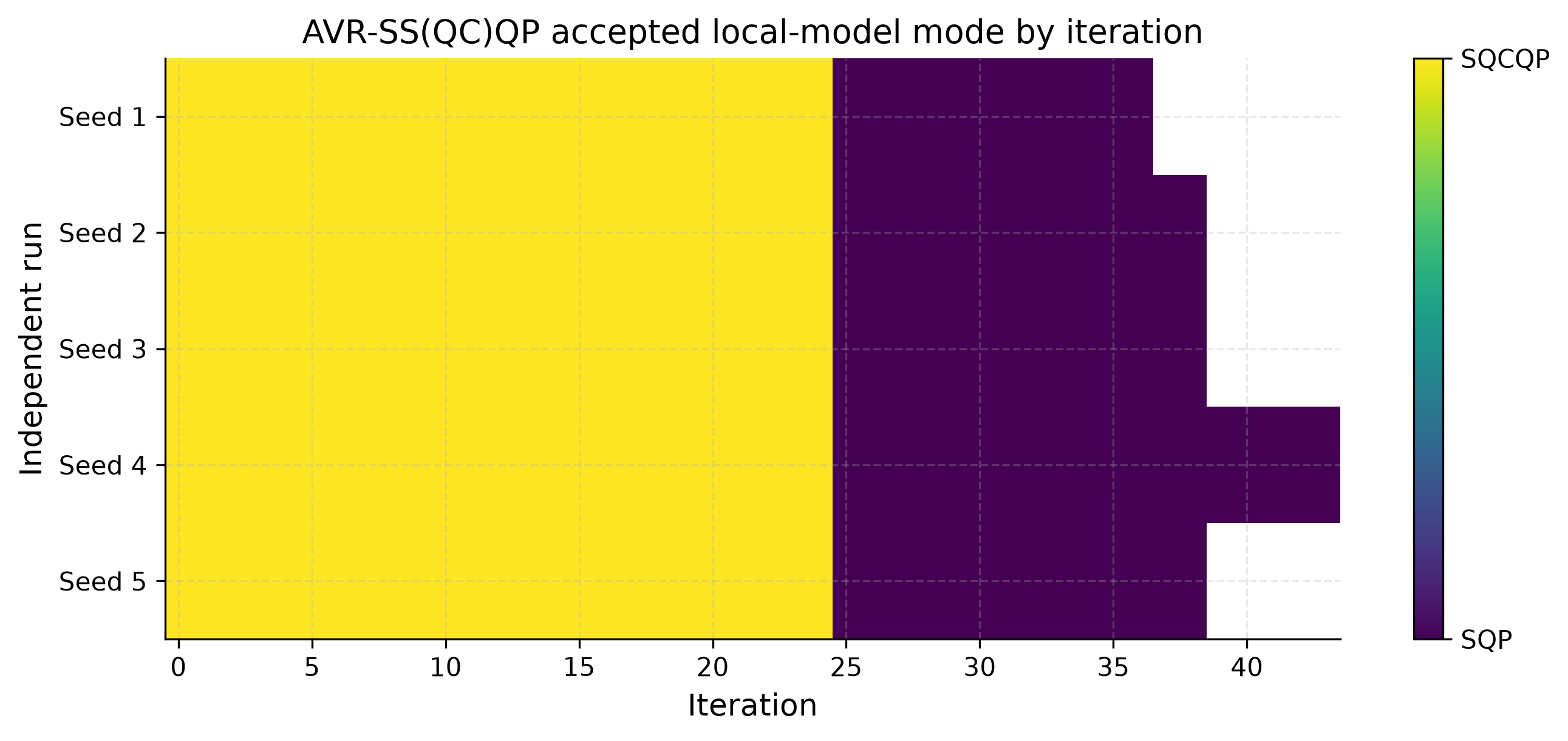}
\vspace{-2mm}

{\small (e) SQP--SQCQP mode selection across seeds}
\end{minipage}
\hfill
\begin{minipage}[t]{0.48\textwidth}
\centering
\includegraphics[
    width=\linewidth,
    height=0.185\textheight,
    keepaspectratio
]{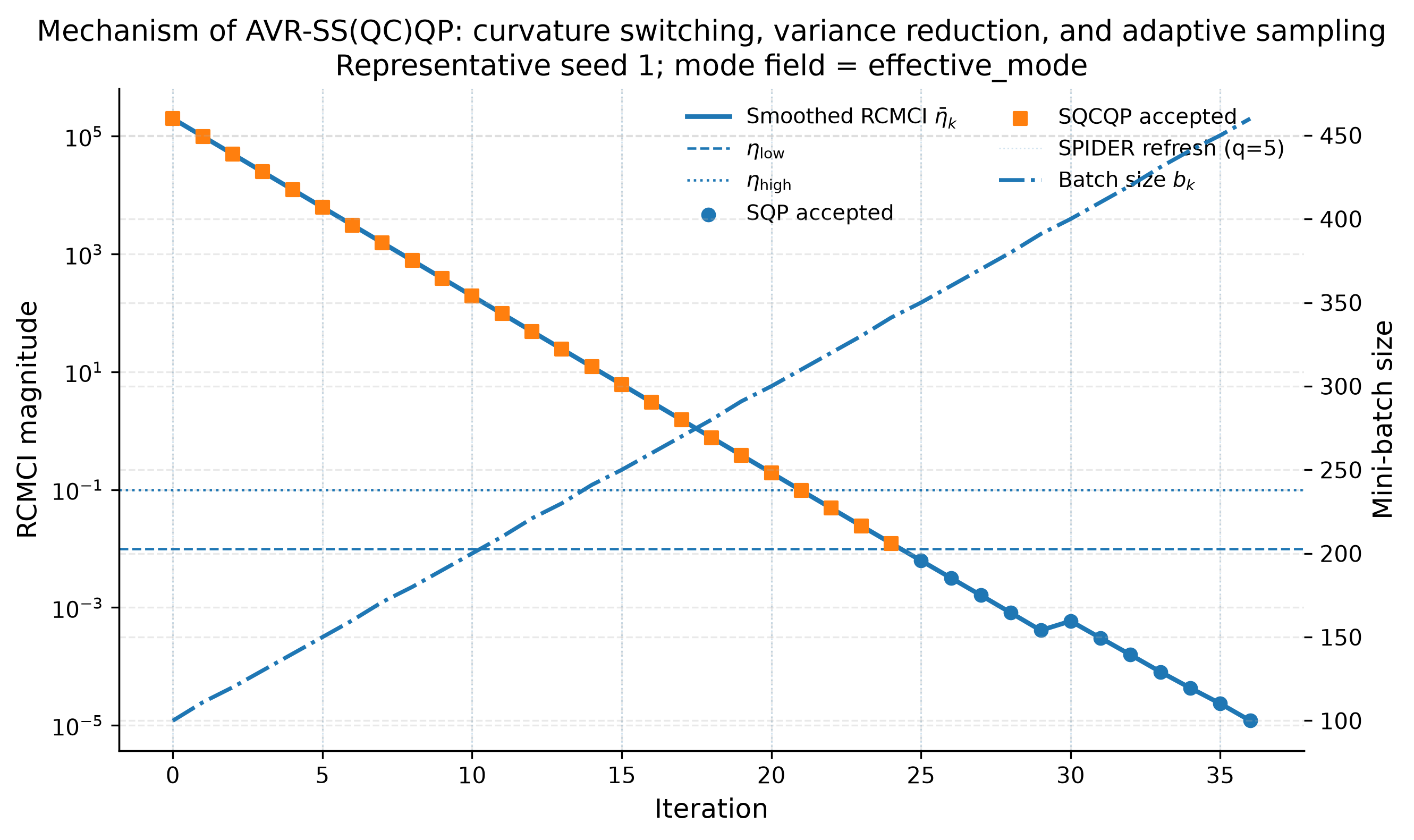}
\vspace{-2mm}

{\small (f) Combined switching and variance-reduction mechanism}
\end{minipage}

\vspace{1.5mm}

\begin{minipage}[t]{0.48\textwidth}
\centering
\includegraphics[
    width=\linewidth,
    height=0.185\textheight,
    keepaspectratio
]{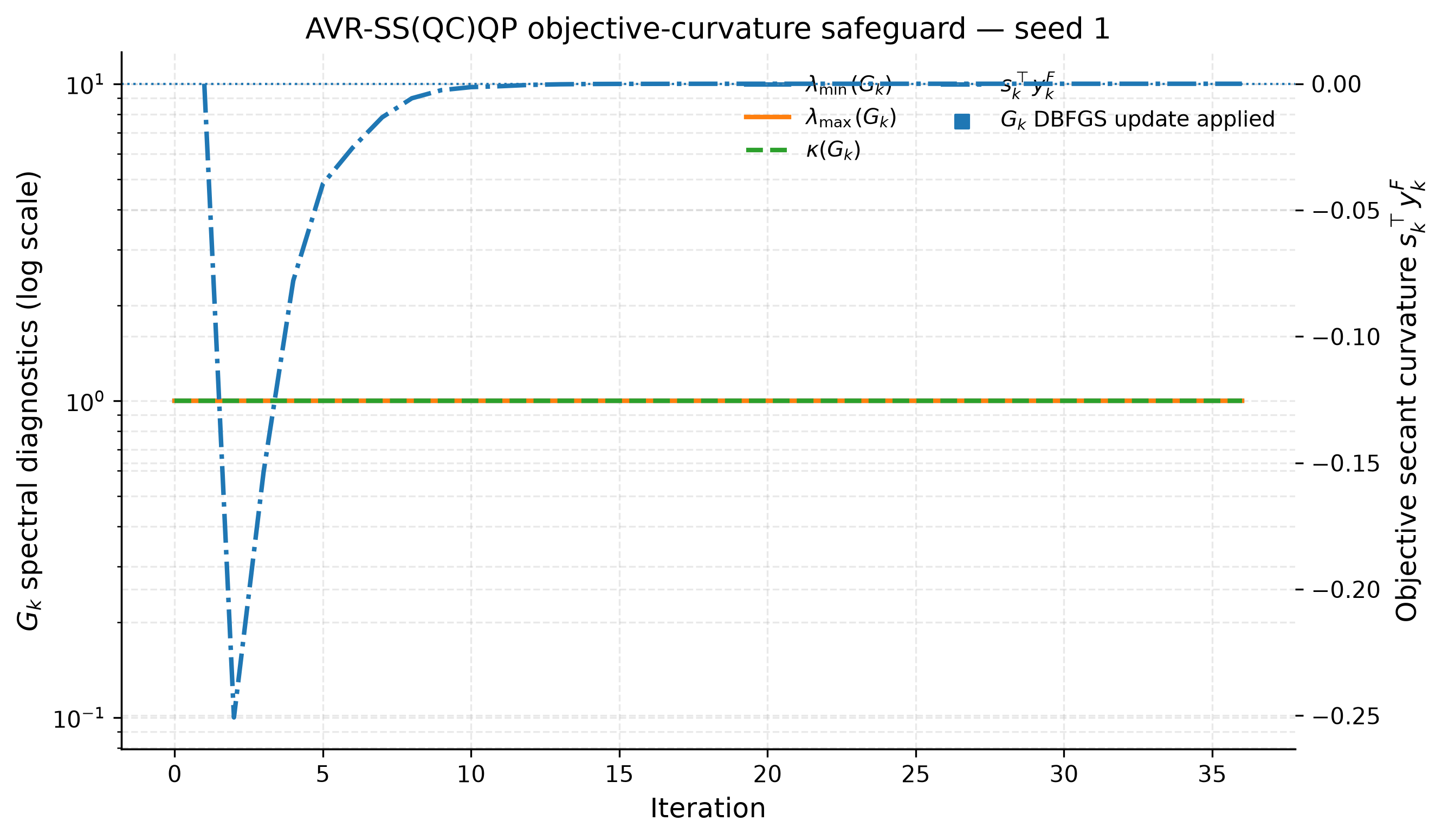}
\vspace{-2mm}

{\small (g) Objective-curvature safeguard diagnostics}
\end{minipage}
\hfill
\begin{minipage}[t]{0.48\textwidth}
\centering
\includegraphics[
    width=\linewidth,
    height=0.185\textheight,
    keepaspectratio
]{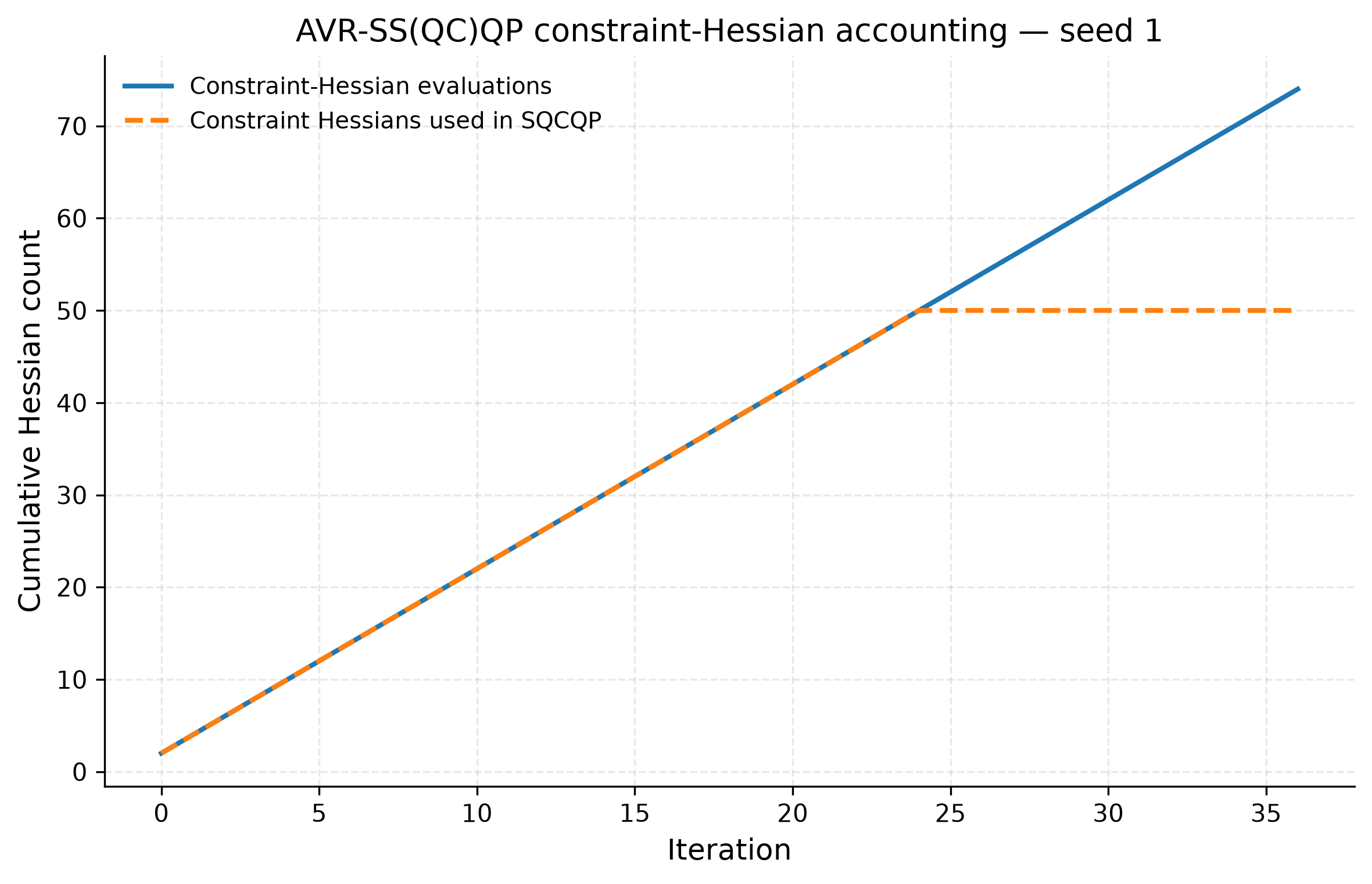}
\vspace{-2mm}

{\small (h)  Constraint-Hessian evaluation and SQCQP-use counts}
\end{minipage}

\caption{Solution-quality and internal-mechanism diagnostics for
AVR-SS(QC)QP. Panel (a) relates final objective quality to KKT
stationarity, while panel (b) displays the recovered Sparse PCA
loading vector. Panels (c)--(e) illustrate the RCMCI switching
criterion, adaptive mini-batch evolution, and SQP--SQCQP mode
selection. Panel (f) summarizes the interaction between curvature-aware
switching, SPIDER variance reduction, and adaptive sampling, whereas
panel (g) reports the objective-curvature safeguard behavior.Panel~(h) compares the cumulative constraint-Hessian evaluations with
their cumulative use in SQCQP subproblems.}
\label{fig:avr_diagnostics}
\end{figure*}
\subsection{Numerical Results and Discussion}
\label{subsec:numerical_results}
Figures~\ref{fig:benchmark_summary} and~\ref{fig:avr_diagnostics} summarize the comparative performance and internal behavior of AVR-SS(QC)QP. As reported in Table~\ref{tab:aggregate_results}, AVR-SS(QC)QP attains a mean objective of approximately \(-1.76478\), a KKT residual of \(4.65\times10^{-6}\), and negligible
constraint violation. AALM-LO reaches comparable final stationarity, whereas substantially larger KKT residuals are obtained by StoSQP-CRZ and Constrained-Adam. In particular, the small violation of Constrained-Adam together with its larger KKT residual illustrates that feasibility alone does not ensure constrained stationarity.

\begin{table*}[t]
\centering
\caption{Aggregate Sparse PCA results over five seeds, reported as
mean $\pm$ 95\% Student-$t$ confidence-interval half-width.}
\label{tab:aggregate_results}

\scriptsize
\setlength{\tabcolsep}{4pt}

\resizebox{\textwidth}{!}{%
\begin{tabular}{lccccc}
\hline
Method & Objective & KKT residual & Violation & Oracle calls & Time (s) \\
\hline

AVR-SS(QC)QP
&
$-1.76477666082 \pm 7.15\times10^{-11}$
&
$(4.65 \pm 2.48)\times10^{-6}$
&
$(3.39 \pm 4.12)\times10^{-11}$
&
$72{,}858 \pm 6{,}295$
&
$0.442 \pm 0.044$
\\

AALM-LO
&
$-1.76477666680 \pm 7.10\times10^{-9}$
&
$(4.13 \pm 4.52)\times10^{-6}$
&
$(3.42 \pm 4.03)\times10^{-9}$
&
$(2.222 \pm 0.358)\times10^{7}$
&
$15.942 \pm 2.353$
\\

StoSQP-CRZ
&
$-1.08550411 \pm 2.01\times10^{-3}$
&
$0.8045 \pm 0.0163$
&
$(3.769 \pm 0.177)\times10^{-5}$
&
$92{,}160 \pm 0$
&
$0.227 \pm 0.074$
\\

Constrained-Adam
&
$-1.756619 \pm 1.864\times10^{-3}$
&
$0.1585 \pm 0.0198$
&
$(6.49 \pm 6.18)\times10^{-7}$
&
$76{,}800 \pm 0$
&
$0.142 \pm 0.020$
\\

\hline
\end{tabular}%
}
\end{table*}

Because different amounts of stochastic-gradient information are used per iteration, oracle evaluations provide a more meaningful efficiency measure than iteration counts alone. Figures~\ref{fig:benchmark_summary}(d), (f), and~(g) therefore report progress and final cost in terms of cumulative sample-gradient evaluations. AVR-SS(QC)QP requires about \(7.29\times10^4\) oracle calls, which is of the same order as StoSQP-CRZ and Constrained-Adam, while AALM-LO requires approximately \(2.22\times10^7\). Thus, the small KKT residual of AVR-SS(QC)QP is
obtained without the substantially larger oracle cost observed for AALM-LO.

The corresponding runtimes are shown in
Figure~\ref{fig:benchmark_summary}(e). AVR-SS(QC)QP requires about
\(0.442\) seconds, compared with \(0.227\) and \(0.142\) seconds for
StoSQP-CRZ and Constrained-Adam, respectively, but remains far below
the \(15.9\) seconds required by AALM-LO. The additional cost reflects
the local SQP/SQCQP solves, switching mechanism, and globalization.
Wall-clock results should nevertheless be interpreted together with
oracle counts because they depend on implementation details.

Figure~\ref{fig:avr_diagnostics} illustrates the internal mechanisms
of AVR-SS(QC)QP. Panels~(a)--(b) show final solution quality and the
recovered Sparse PCA loading vector. Panels~(c)--(e) illustrate the
RCMCI trajectory, adaptive batching, and SQP--SQCQP switching, while
panels~(f)--(g) summarize the interaction among SPIDER refreshes,
model switching, and quasi-Newton curvature safeguards. Panel~(h)
compares cumulative constraint-Hessian evaluations with their use in
SQCQP subproblems. The eventual plateau of the SQCQP-use curve is
consistent with the transition to the SQP regime established in the
convergence analysis.

Paired two-sided Wilcoxon signed-rank tests were additionally applied
to the final objective values using runs paired by random seed. The
results are reported in Table~\ref{tab:wilcoxon_compact}.

\begin{table}[t]
\centering
\caption{Paired Wilcoxon signed-rank tests for final objective values,
with differences defined as
\(F_{\mathrm{AVR}}-F_{\mathrm{baseline}}\).}
\label{tab:wilcoxon_compact}
\small
\begin{tabular}{lccc}
\hline
Baseline & Statistic & Exact \(p\)-value & Median difference \\
\hline
AALM-LO & \(0\) & \(0.0625\) & \(1.12\times10^{-8}\) \\
StoSQP-CRZ & \(0\) & \(0.0625\) & \(-6.80\times10^{-1}\) \\
Constrained-Adam & \(0\) & \(0.0625\) & \(-6.45\times10^{-3}\) \\
\hline
\end{tabular}
\end{table}

The exact two-sided \(p\)-value is \(0.0625\) in all three comparisons,
so statistical significance at the \(5\%\) level is not obtained.
With only five paired observations, however, \(0.0625\) is the smallest
attainable exact two-sided Wilcoxon \(p\)-value in the absence of ties.
The tests are therefore interpreted only as supplementary evidence.
The near-zero median difference against AALM-LO is consistent with
their similar final objectives, whereas the negative differences
against StoSQP-CRZ and Constrained-Adam reflect the lower objective
values obtained by AVR-SS(QC)QP.

Overall, AVR-SS(QC)QP achieves a small KKT residual and negligible
constraint violation with an oracle budget comparable to the
fixed-batch stochastic baselines. The internal diagnostics further
confirm the intended interaction among variance reduction, adaptive
sampling, curvature-based SQP--SQCQP switching, and globalization.

\noindent
\section{Conclusions}\label{sec6}
This work introduced AVR-SS(QC)QP, an adaptive variance-reduced stochastic sequential (quadratic constrained) quadratic programming framework for large-scale nonlinear constrained optimization. The method combines SPIDER variance reduction and adaptive mini-batching with safeguarded positive-definite quasi-Newton models, exact \(\ell_1\)-merit-function globalization, and adaptive switching between SQP and SQCQP subproblems. The proposed RCMCI provides a curvature-aware mechanism for determining when second-order constraint information is sufficiently important to justify the additional cost of an SQCQP model, while the SQP fallback and Armijo globalization maintain a common descent mechanism for both local models. Convergence of the proposed method is justified under some mild assumptions. In addition to this, the proposed method is verified with constrained Sparse PCA and compared with StoSQP-CRZ, AALM-LO, and hard-constrained Adam. 
\section*{Declarations}
\begin{itemize}
    \item {\bf Funding:} This article is partially funded by the NBHM research project entitled ``{\it Higher Order Stochastic Methods for Large Scale Multi-objective Optimization Problems}''. Author {\it Md Abu Talhamainuddin Ansary} is thankful to NBHM for this funding.
    \item  {\it Conflict of interest/Competing interests:} The authors declare no conflict of interest.
\item {\it Ethics approval and consent to participate:} Not applicable
\item {\it Consent for publication:} Not applicable
\item {\it Data availability:} The dataset used in this study was obtained from Kaggle and is publicly available at: https://www.kaggle.com/datasets/lainguyn123/student-performance-factors.
\item {\it Materials availability:} Not applicable
\item {\it Code availability:} Codes will be made available from corresponding author based on requests.
\item {\it Author contribution:} Each author has contributed equally 
\end{itemize}

\end{document}